\numberwithin{equation}{section}
\theoremstyle{plain}
\newtheorem{thm}{Theorem}[section]
\newtheorem{prop}[thm]{Proposition}
\newtheorem{defi}[thm]{Definition}
\newtheorem{lem}[thm]{Lemma}
\newtheorem{cor}[thm]{Corollary}
\theoremstyle{remark}
\newtheorem{rema}[thm]{Remark}
\title[Trigonometric Cherednik algebra at critical level]{Trigonometric 
Cherednik algebra at critical level and quantum many-body problems}
\author{E. Emsiz, E. M. Opdam and J. V. Stokman}
\address{E. Emsiz, Instituto de Matem\'atica y F\'isica, 
Universidad de Talca, Casilla 747, Talca, Chile}
\email{eemsiz@inst-mat.utalca.cl}
\address{E. M. Opdam and J. V. Stokman, 
KdV Institute for Mathematics, Universiteit van Amsterdam,
Plantage Muidergracht 24, 1018 TV Amsterdam, The Netherlands.}
\email{opdam@science.uva.nl, jstokman@science.uva.nl}
\begin{document}

\begin{abstract}
For any module over the affine Weyl group
we construct a representation of the associated trigonometric Cherednik
algebra $A(k)$ at critical level in terms of Dunkl type operators.  Under
this representation the center of $A(k)$ produces
quantum conserved integrals for root system generalizations of
quantum spin-particle systems on the circle with delta function
interactions.
This enables us to translate the spectral problem of such a quantum
spin-particle
system to questions in the representation theory of $A(k)$. We use this
approach
to derive the associated Bethe ansatz equations. They are expressed in
terms of the normalized intertwiners of $A(k)$.

\end{abstract}

\maketitle
\section{Introduction}

The trigonometric Cherednik algebra \cite{C0,C} 
depends on a root system $R$, a multiplicity function $k$, 
and a level $c$. For noncritical level $c\not=0$
it is an indispensable tool in the analysis 
of quantum Calogero-Moser systems with trigonometric potentials.  
In this paper we use the trigonometric Cherednik algebra
$A(k)$ at critical level $c=0$ to analyze the 
root system generalizations of quantum spin-particle 
systems on the circle with delta function interactions. 

The study of one dimensional quantum spin-particle systems with delta
function interactions goes back to Lieb and Liniger \cite{LL}.
These systems are particularly well studied and 
have an amazingly rich structure, see, e.g., 
\cite{LL,YY,McG,Y,S,Ga}, to name just a few.
They have been successfully analyzed by Bethe ansatz methods and
by quantum inverse scattering methods. We want to advertise here
yet another technique which is based on degenerate Hecke algebras.
It allows us to extend the Bethe ansatz techniques to 
quantum Hamiltonians with delta function potentials 
along the root hyperplanes of any (affine) root system. This 
builds on many earlier works, see, e.g., \cite{Ga,GS,G,P,CY,MW,HK,HO,EOS}.
In special cases the associated quantum system describes 
one dimensional quantum spin-particles with pair-wise delta function
interactions and with boundary reflection terms.
   
To clarify the interrelations between these techniques
we feel that it is instructive 
to start with a short discussion about the analysis of
the quantum spin-particle systems on the circle with delta
function interactions using the classical Bethe ansatz method. 
It goes back to the work of Lieb and Liniger \cite{LL} in case of the
quantum Bose gas. The extension of these techniques to quantum particles 
with spin was considered, amongst others, by McGuire \cite{McG0,McG}, 
Flicker and Lieb \cite{FL}, Gaudin \cite{Ga0} and Yang \cite{Y}.

Consider the natural action of $S_n\ltimes\mathbb{Z}^n$ on
$\mathbb{R}^n$ by permutations and translations and choose a representation
$\rho: S_n\ltimes\mathbb{Z}^n\rightarrow \textup{GL}_{\mathbb{C}}(M)$
(it encodes the spin of the quantum particles). 
Consider the quantum Hamiltonian 
\[
\mathcal{H}_k^M=-\Delta-k\sum_{\stackrel{1\leq i<j\leq n}{m\in\mathbb{Z}}}
\delta(x_i-x_j+m)\rho(s_{i,j;m})
\]
with $\Delta$ the Laplacean on $\mathbb{R}$, $k\in\mathbb{C}$
a coupling constant, $\delta$ Dirac's delta function 
and $s_{i,j;m}\in S_n\ltimes\mathbb{Z}^n$
the orthogonal reflection in the affine hyperplane
\[
\{x\in\mathbb{R}^n \, | \, x_j-x_i=m\}.
\] 
With the present choice of notations, coupling constant
$k<0$ (respectively $k>0$) corresponds to repulsive (respectively
attractive) delta function interactions between the quantum particles.

A fundamental domain for the action of $S_n\ltimes\mathbb{Z}^n$ on 
$\mathbb{R}^n$ is 
\[
C_+=\{x\in\mathbb{R}^n \, | \, x_1>x_2>\cdots>x_n>x_1-1\}.
\] 
The Bethe hypothesis in this context (see \cite{LL}) 
is to look for eigenfunctions
of $\mathcal{H}_k^M$ that have an expansion in plane waves 
in each $S_n\ltimes\mathbb{Z}^n$-translate of $C_+$.
The hypothesis is justified by the following result.
\begin{thm}\label{introTHM}
Fix $\lambda\in\mathbb{C}^n$ such that $\lambda_i\not=\lambda_j$
for $i\not=j$. Choose furthermore $m_w\in M$ ($w\in S_n$).
There exists a unique $M$-valued continuous function
$f_\lambda$ on $\mathbb{R}^n$ satisfying
\begin{enumerate}
\item[{\bf (i)}] $f_\lambda$ is an eigenfunction 
of $\mathcal{H}_k^M$ (in the weak sense)
with eigenvalue given by $-\lambda^2:=-\sum_{j=1}^n\lambda_j^2$.
\item[{\bf (ii)}] On each $S_n\ltimes\mathbb{Z}^n$-translate of $C_+$,
$f_\lambda$ can be expressed as a sum of plane waves $e^{w\lambda}$
($w\in S_n$) with coefficients in $M$.
\item[{\bf (iii)}] $f_\lambda|_{C_+}=\sum_{w\in S_n}m_we^{w\lambda}|_{C_+}$.
\end{enumerate}
\end{thm}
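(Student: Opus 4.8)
The plan is to translate the theorem into a combinatorial statement on the alcove complex of the affine Weyl group $W=S_n\ltimes\mathbb{Z}^n$. The closure $\overline{C_+}$ and its $W$-translates tile $\mathbb{R}^n$, two adjacent alcoves differing by a simple affine reflection. Each plane wave $e^{v\lambda}$ ($v\in S_n$) is an eigenfunction of $-\Delta$ with eigenvalue $-(v\lambda)^2=-\lambda^2$, so a function that on every alcove $A$ has the shape $f|_A=\sum_{v\in S_n}c^A_v\,e^{v\lambda}|_A$ with $c^A_v\in M$ automatically solves $\mathcal{H}_k^Mf=-\lambda^2f$ in alcove interiors, the delta potentials being supported on the walls. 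Writing out the distributional identity $\mathcal{H}_k^Mf=-\lambda^2f$ near a wall on the hyperplane $H_{i,j;m}=\{x_j-x_i=m\}$ one finds it equivalent to two conditions: $f$ is continuous across $H_{i,j;m}$, and the jump of the normal derivative of $f$ across it equals, up to sign and the normalization of $k$, the operator $k\,\rho(s_{i,j;m})$ applied to the restriction $f|_{H_{i,j;m}}$. Thus the theorem amounts to: there is a unique assignment $A\mapsto(c^A_v)_{v}\in M^{S_n}$ with $C_+\mapsto(m_v)_v$ satisfying these wall conditions across every wall, and moreover the resulting piecewise-exponential function is continuous on $\mathbb{R}^n$.

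Next I would make the wall conditions explicit, producing a ``crossing operator''. Fix adjacent alcoves $A,A'$ separated by a wall on $H:=H_{i,j;m}$, with unit normal $\nu$ pointing into $A'$ and affine reflection $s:=s_{i,j;m}$. On $H$ the plane waves pair up: since $s$ fixes $H$ pointwise and has linear part the transposition $s_{ij}$, the restriction $e^{v\lambda}|_H$ is proportional to $e^{(s_{ij}v)\lambda}|_H$; because $\lambda$ has distinct coordinates the representatives of distinct pairs give pairwise non-proportional, hence linearly independent, exponentials on $H$, and moreover $\langle v\lambda,\nu\rangle\neq0$ for every $v$. Matching the coefficient of each $e^{v\lambda}|_H$ in the continuity equation and in the normal-derivative-jump equation yields, for every unordered pair $\{v,s_{ij}v\}$, an invertible $2\times2$ system over $\mathrm{End}(M)$ expressing $(c^{A'}_v,c^{A'}_{s_{ij}v})$ through $(c^A_v,c^A_{s_{ij}v})$; in the natural sum/difference coordinates it is a shear whose off-diagonal entry is a scalar multiple of $k/\langle v\lambda,\nu\rangle$ times $\rho(s)$, visibly invertible. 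This defines an operator $T_{H,\nu}\in\mathrm{GL}(M^{S_n})$ depending only on $H,\nu,k,\lambda$, with $T_{H,-\nu}=T_{H,\nu}^{-1}$. The combinatorial problem then reads: choose $(c^A)_A$ with $(c^{A'})=T_{H,\nu}(c^A)$ whenever one passes from $A$ to $A'$ across $H$ in direction $\nu$, normalized by $(c^{C_+})=(m_v)$.

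The heart of the matter is existence: one must show that propagating $(m_v)$ out of $C_+$ along a gallery of alcoves is independent of the gallery. By the standard fact that a locally consistent system of transition maps on the alcove complex is globally consistent once it is consistent around every codimension-two face (equivalently, the braid relations of $W$ are generated in ranks at most two), it suffices to examine the rank-two reflection arrangements through such faces, which in type $A$ are $A_1\times A_1$ or $A_2$. The $A_1\times A_1$ case is immediate: the two hyperplanes are orthogonal, the associated transpositions act on disjoint indices and the two reflections commute in $W$, so the two crossing operators commute. The $A_2$ case is the main obstacle, and it is exactly Yang's quantum Yang--Baxter equation: around such a face the composition of the six wall crossings is the identity, which unwinds to a braid relation $T_1T_2T_1=T_2T_1T_2$ among the three crossing operators involved, whose ``two-body $S$-matrices'' are essentially $R_{ij}(u)\sim\bigl(u\,\rho(s_{ij})+k\bigr)/(u+k)$ evaluated at differences of coordinates of $\lambda$. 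I expect this to be the only genuinely computational step. It is reduced to the classical identity in three moves: for the defining representation of $S_n$ and the linear walls ($m=0$) it is Yang's original computation; a translation carries the affine walls to linear ones; and a general $W$-module $M$ introduces no new obstruction, because the three reflections bounding an $A_2$ face have $\rho$-images obeying precisely the $S_3$ relations that the computation uses.

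Granting this, define $(c^A_v)$ for every alcove $A$ by propagating from $C_+$ along any gallery. The resulting piecewise-exponential $f$ is continuous across every codimension-one wall by construction, and the closed alcoves form a locally finite closed cover of $\mathbb{R}^n$ on whose overlaps $f$ agrees (adjacent alcoves match on their common codimension-one face, and this propagates to all common faces by chaining), so $f$ is continuous on $\mathbb{R}^n$. By construction $f$ satisfies (ii) and (iii); and it satisfies (i) because in alcove interiors $\mathcal{H}_k^Mf=-\Delta f=-\lambda^2f$ while the only distributional corrections live on the walls and vanish by exactly the matching relations encoded in the $T_{H,\nu}$. For uniqueness, any $f$ obeying (i)--(iii) has $f|_{C_+}=\sum_vm_ve^{v\lambda}$ by (iii); condition (i) then forces the wall conditions, which determine $(c^A_v)$ recursively from $(m_v)$ along a gallery from $C_+$ to $A$, so $f$ is pinned down on every alcove interior and, being continuous, on all of $\mathbb{R}^n$. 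More invariantly, once consistency is known the operators $T_{H,\nu}$ organize $M^{S_n}$ into a $W$-equivariant local system on the alcove complex --- a version of the intertwiner calculus underlying the present paper --- and $f_\lambda$ is its global section; but the elementary bookkeeping above is enough.
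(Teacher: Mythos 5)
Your proposal is correct in substance, but it proves the theorem by the classical Bethe-ansatz route rather than the one the paper actually takes. The paper obtains Theorem \ref{introTHM} as the type-$A$ specialization of Theorem \ref{ellcor}: existence comes from the propagation operator $T^{k,M}$, whose gallery-consistency is never checked wall by wall but is automatic once the integral-reflection action $Q_{k,M}$ of $A(k)$ on $C^\omega(V)\otimes_{\mathbb{C}}M$ is known to be a representation (Theorem \ref{intrefl}, itself deduced from the induced module $\hbox{Ind}_{\mathbb{C}[W^a]}^{A(k)}(M^*)$ via Gutkin's duality, Lemma \ref{Dunklapproachdual}, and conjugation by translations to reduce the affine braid relations to finite ones); uniqueness is Proposition \ref{elliptic}, an elliptic-regularity and analytic-continuation argument. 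You instead propagate the coefficient tuple $(c_v^A)_v$ directly through wall-crossing operators and reduce gallery-independence to consistency around codimension-two faces, i.e.\ to the $A_1\times A_1$ and $A_2$ rank-two checks, the latter being Yang's unitary Yang--Baxter equation; your uniqueness comes from the invertibility of the $2\times 2$ crossing system, which is exactly where the hypothesis $\lambda_i\neq\lambda_j$ enters (the restrictions $e^{v\lambda}|_H$ pair up into a basis, and $\langle v\lambda,\nu\rangle\neq 0$). Your route is more elementary and makes the role of the Yang--Baxter equation completely explicit; the paper's route is uniform in $\lambda$ (Proposition \ref{elliptic} needs no regularity, hence also covers spectral values where $E(\lambda)$ contains polynomial-times-exponential solutions and coefficient matching fails), avoids the rank-two case analysis (which for other root systems would also include $B_2$ and $G_2$ configurations), and builds the algebraic apparatus --- the functors $F^k_{ir},F^k_{dr}$ and the intertwiners --- that the rest of the paper needs. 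The one step you assert rather than prove is the one you flag yourself: that the hexagon condition around an $A_2$ face unwinds to the Yang--Baxter and unitarity relations for $Y^k_{i,j}(u)=(u+ks_{i,j})/(u-k)$; the reduction you sketch (conjugate affine walls to linear ones by a translation, verify the identity once in $\mathbb{C}[S_3]$, then evaluate in $\rho$) is sound and standard, but it must actually be carried out for the existence half to be complete.
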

The proof of the theorem encompasses an explicit recipe how to propagate 
$\sum_{w\in S_n}m_we^{w\lambda}|_{C_+}$ 
to the eigenfunction $f_\lambda$
of $\mathcal{H}_k^M$. It is based on the reformulation of the 
spectral problem $\mathcal{H}_k^Mf_\lambda=-\lambda^2f_\lambda$ as a 
boundary value problem. 
The uniqueness of $f_\lambda$ is the subtle point, we discuss it
in Section \ref{Spectralsection}. 

Fix $\lambda\in\mathbb{C}^n$ generic (to be made precise
in the main text) and consider 
the diagonal $S_n\ltimes\mathbb{Z}^n$-action $(w\cdot f)(x):=
\rho(w)f(w^{-1}x)$ on the space of $M$-valued functions on $\mathbb{R}^n$.
The study of the $S_n$-invariant eigenfunctions $f_\lambda$ has led
to the discovery of the famous Yang-Baxter equation with spectral parameters
as follows \cite{McG0,Y}.
Denote $s_{i,j}=s_{i,j;0}\in S_n$ and consider the elements 
\[
Y_{i,j}^k(u)=\frac{u+ks_{i,j}}{u-k},\qquad 1\leq i<j\leq n
\]
in the group algebra $\mathbb{C}[S_n]$ of $S_n$ 
for generic $u\in\mathbb{C}$. The $S_n$-invariance of the
eigenfunction $f_\lambda$ is equivalent to the plane wave coefficients
$m_w$ being of the form $m_w=\rho(J_w^k(\lambda))m$ ($m\in M$)
with $J_w^k(\lambda)$ the unique 
elements in $\mathbb{C}[S_n]$ satisfying $J_1^k(\lambda)=1$ and 
\begin{equation}\label{rr}
J_{s_{i,i+1}w}^k(\lambda)=Y_{i,i+1}^k(\lambda_{w^{-1}(i)}-\lambda_{w^{-1}(i+1)})
s_{i,i+1}J_w^k(\lambda)
\end{equation}
for $1\leq i<n$ and $w\in S_n$. We denote the corresponding $S_n$-invariant
eigenfunction by $f_\lambda^m$. The consistency of \eqref{rr} is equivalent to 
the $Y_{i,j}$ being unitary solutions of the Yang-Baxter equation,
\begin{equation*}
\begin{split}
Y_{i,j}^k(u)Y_{i,l}^k(u+v)Y_{j,l}^k(v)&=
Y_{j,l}^k(v)Y_{i,l}^k(u+v)Y_{i,j}^k(u),\qquad 1\leq i<j<l\leq n,\\
Y_{i,j}^k(-u)&=Y_{i,j}^k(u)^{-1},\qquad\qquad\qquad\qquad\qquad
1\leq i<j\leq n.
\end{split}
\end{equation*}

We now translate the requirement that 
$f_\lambda^m$ is $\mathbb{Z}^n$-invariant to explicit
conditions on $m\in M$. 
The group $S_n\ltimes\mathbb{Z}^n$ is generated by $S_n$
and one additional element $\pi$, which we characterize here by its
action on $\mathbb{R}^n$,
\[\pi(x_1,\ldots,x_{n-1},x_n)=(x_n+1,x_1,\ldots,x_{n-1}).
\]
There exists a unique extension of the $J_w^k(\lambda)$  
to elements $J_w^k(\lambda)$ ($w\in S_n\ltimes\mathbb{Z}^n$)
in the group algebra $\mathbb{C}[S_n\ltimes\mathbb{Z}^n]$ 
of $S_n\ltimes\mathbb{Z}^n$ satisfying $J_\pi^k(\lambda)=\pi$ and satisfying
the cocycle relation
\[J_{vw}^k(\lambda)=J_v^k((Dw)\lambda)J_w(\lambda)\qquad
\forall\, v,w\in S_n\ltimes\mathbb{Z}^n,
\]
where $D: S_n\ltimes\mathbb{Z}^n\rightarrow S_n$ is the
natural surjective group homomorphism (omitting the translation part).
Then $f_\lambda^m$ is $\mathbb{Z}^n$-invariant if and only if 
$m\in M$ satisfies the Bethe ansatz equations
\[\rho(J_y^k(\lambda))m=e^{\lambda(y)}m,\qquad\forall\, y\in\mathbb{Z}^n,
\]
see Theorem \ref{BAEthm}.
The Bethe ansatz equations \eqref{BAE} for special modules $M$ were
derived in, e.g., \cite{LL,McG,Y,Ga0,S}.

Our results will show that for root system $R$ of type $A$,
the trigonometric Cherednik
algebra $A(k)$ at critical level $c=0$ enters the analysis of
these quantum systems in three closely related ways: 
\begin{enumerate}
\item[{\bf (i)}] The quantum Hamiltonian $\mathcal{H}_k^M$
can be constructed from a family of commuting
Dunkl type differential-reflection operators. These Dunkl operators, together
with the diagonal $S_n\ltimes\mathbb{Z}^n$-action, give a 
presentation of $A(k)$.
\item[{\bf (ii)}] The propagation procedure, which extends
a plane wave $\sum_{w\in S_n}m_we^{w\lambda}|_{C_+}$ to the 
eigenfunction $f_\lambda$ of $\mathcal{H}_k^M$, 
is governed by a representation of
$S_n\ltimes\mathbb{Z}^n$ defined in terms of integral-reflection operators.
Together with the constant coefficient differential operators it gives
another presentation of $A(k)$.
\item[{\bf (iii)}] The cocycle $\{J_w^k(\lambda)\}_{w\in S_n\ltimes\mathbb{Z}^n}$ 
comes from the action of the normalized intertwiners of $A(k)$
on principal series modules of $A(k)$.
\end{enumerate}
In this paper we discuss {\bf (i--iii)} for arbitrary root
systems $R$. It is subsequently used to apply the Bethe ansatz methods
to the root system generalizations of the spin-particle systems on
the circle with delta function interactions. It leads in particular 
to the Bethe ansatz equations for any root system $R$ 
(see Theorem \ref{BAEthm}).  

\section{Notations}


\subsection{Orthogonal reflections in affine hyperplanes}

Let $V$ be a
Euclidean vector space with scalar
product $\langle\cdot,\cdot\rangle$.
The linear dual $V^*$ inherits from $V$ the structure of a
Euclidean vector space and we also write
$\langle\cdot,\cdot\rangle$ for the associated
scalar product on $V^*$. In this and the next subsection $B$ denotes
a commutative unital noetherian $\mathbb{R}$-algebra.
For any real vector space $M$ we use $M_B$ as a shorthand notation
for the $B$-module $B\otimes_\mathbb{R}M$.
We extend the scalar products on $V$ and on $V^*$ to $B$-bilinear forms on
$V_B$ and $V_{B}^*$, which we still denote by $\langle \cdot,\cdot\rangle$.
Let $P(V)$ denote the algebra of real polynomial functions on $V$.
We regularly identify $P_B(V):=P(V)_B$ with the symmetric algebra
$S(V_B^*)$ of $V_B^*$ by interpreting $\xi\in V_B^*$ as the
$B$-valued polynomial $v\mapsto \xi(v)$ on $V$.

Consider the space
$\hbox{Aff}_B(V):=\hbox{Aff}(V)_B$ of $B$-valued affine linear functions
on $V$. It is the subspace of $P_B(V)$ consisting of $B$-valued polynomials
of degree $\leq 1$ on $V$. Under the natural identification
$P_B(V)\simeq S(V_B^*)$, the affine linear function $\phi\in\textup{Aff}_B(V)$
identifies with $\xi+\lambda 1\in V_B^*\oplus B1$ ($\xi\in V_B^*$,
$\lambda\in B$) where $\xi$ is the gradient $D\phi$ of $\phi$ and
$\lambda=\phi(0)$, i.e.
\[\phi(v)=\xi(v)+\lambda,\qquad \forall\, v\in V.
\]

The co-vector $\xi^\vee\in V$
associated to $\xi\in V^*\setminus \{0\}$ is
defined by
\[\eta(\xi^\vee)=2\frac{\langle \eta,\xi\rangle}{\langle \xi,\xi\rangle},\qquad
\forall\, \eta\in V^*.
\]
For $\phi\in \hbox{Aff}(V)$ with nonzero gradient,
the map $s_\phi: V\rightarrow V$ defined by
\[s_\phi(v)=v-\phi(v)(D\phi)^\vee,
\qquad v\in V
\]
is the orthogonal reflection in the affine hyperplane
$V_\phi=\{v\in V \, | \, \phi(v)=0\}$.
Observe that $s_\phi$ is linear if $\phi$ is linear,
in which case we also write $s_\phi$ for its $B$-linear extension
$s_\phi: V_B\rightarrow V_B$.

For $v\in V_B$ we define translation operators
$t_v: V_B\rightarrow V_B$ by
\[
t_v(v^\prime)=v+v^\prime, \qquad v^\prime\in V_B.
\]
Note that $f\circ t_v=t_{f(v)}\circ f$ for $B$-linear
mappings $f: V_B\rightarrow V_B$.
Furthermore, for
$\phi\in\textup{Aff}(V)$ with nonzero gradient we have
\begin{equation}\label{decompWtranslation}
s_\phi=s_{D\phi}t_{\phi(0)(D\phi)^\vee}.
\end{equation}

\subsection{Root systems}\label{Rootsection}

Let $R\subset V^*$ be a reduced, crystallographic root system in $V^*$.
We assume that $R$ is irreducible when considered as root system
in $\textup{span}_{\mathbb{R}}\{R\}\subset V^*$.
The associated co-root system is
$R^\vee=\{\alpha^\vee\}_{\alpha\in R}\subset V$.

Note that we do not require that $R$ spans $V^*$. This allows us
for instance to consider the root system
$R=\{\epsilon_i-\epsilon_j\}_{1\leq i\not=j\leq n}$
of type $A_{n-1}$ with ambient Euclidean space $V^*=\mathbb{R}^n$,
where $\{\epsilon_i\}_{i=1}^n$ is the standard orthonormal basis
of $\mathbb{R}^n$.

The Weyl group $W$ is the subgroup of
$O(V)$ generated by the orthogonal reflections
$s_\alpha$ in the root hyperplanes $V_\alpha$ ($\alpha\in R$),
hence $V$ has a canonical $\mathbb{R}[W]$-module structure.
By extension of scalars we consider $V_B$ as a $B[W]$-module.
In the dual module $V_B^*$ the action of $s_\alpha$ is given explicitly
by
\[s_\alpha(\xi)=\xi-\xi(\alpha^\vee)\alpha,\qquad \xi\in V_B^*
\]
for $\alpha\in R$.

We fix a full lattice $X\subset V^*$ satisfying the following two properties:
\begin{enumerate}
\item[{\it (i)}] $X$ contains the root lattice $Q$ of $R$.
\item[{\it (ii)}] The lattice $Y\subset V$ dual to $X$
contains the co-root lattice $Q^\vee$ of $R^\vee$.
\end{enumerate}
The conditions {\it (i)} and {\it (ii)} imply that
$X$ and $Y$ are $W$-invariant. A key example
is $R$ the root system of type $A_{n-1}$ with $V^*=\mathbb{R}^n$
and $X=\bigoplus_{j=1}^n\mathbb{Z}\epsilon_j$.

\begin{defi}
The extended affine Weyl group associated to the above data
is $W^a=W\ltimes Y$. The affine Weyl group is the normal subgroup
$W\ltimes Q^\vee$ of $W^a$.
\end{defi}
We will identify $W^a$ with the subgroup $\{wt_y\}_{w\in W, y\in Y}$
of the group of isometries of $V$.
The gradient map $D: W^a\rightarrow W$ is the surjective group
homomorphism defined by $D(wt_y)=w$ ($w\in W$, $y\in Y$).

Fix $b\in B$. For $w\in W$ and $y\in Y$ we set
\[
(wt_y)^{(b)}(v):=(wt_{by})(v),\qquad v\in V_B.
\]
This defines a left $W^a$-action on $V_B$.
Observe that $w^{(1)}=w$ and
$w^{(0)}=Dw$ for $w\in W^a$.
Furthermore,
\[w^{(b)}\circ \lambda_b=\lambda_b\circ w,\qquad w\in W^a,
\]
where $\lambda_b(v):=bv$ ($v\in V_B$).

By transposition the $W^a$-action $w\mapsto w^{(b)}$ on $V_B$
gives rise to left actions of $W^a$ on
$P_B(V)\simeq S(V_B^*)$ by $B$-algebra automorphisms.
The subspace $\hbox{Aff}_B(V)$ of $P_B(V)\simeq S(V_B^*)$ is
$W^a$-invariant for all scaling factors $b\in B$ since
\begin{equation}\label{help}
(wt_{by})(\xi+\lambda 1)=w(\xi)+(\lambda-b\xi(y))1
\end{equation}
for $w\in W$, $y\in Y$, $\xi\in V_B^*$ and $\lambda\in B$.

\begin{defi}
The subset $R^a=R+\mathbb{Z}1$ of $V^*+\mathbb{R}1$
is the affine root system associated to $R$.
\end{defi}
By \eqref{help},
\[(wt_y)(a)=w(\alpha)+(m-\alpha(y))1,\qquad a=\alpha+m1\in R^a
\]
for $w\in W$ and $y\in Y$. By assumption the roots $\alpha\in R$
are contained in the lattice $X$,
hence $\alpha(y)\in\mathbb{Z}$ for $\alpha\in R$ and $y\in Y$.
Thus $R^a$ is $W^a$-invariant.

By \eqref{decompWtranslation} we have
\[s_a=s_\alpha t_{m\alpha^\vee},\qquad a=\alpha+m1\in R^a.
\]
Consequently the affine Weyl group $W\ltimes Q^\vee$
is generated by the orthogonal reflections $s_a$
in the affine hyperplanes $V_a$,
where $a$ runs over the set $R^a$ of affine roots. Note furthermore
that $ws_aw^{-1}=s_{wa}$ and $Ds_a=s_{Da}$ for all $a\in R^a$ and
$w\in W^a$.

We fix a basis $F$ of $R$. Denote $R^{\pm}$ for the associated
positive and negative roots, and $\theta\in R$ for the highest root
of $R$ with respect to $F$.
The basis $F$ of $R$ extends to a basis $F^a$ of $R^a$ by
adding the simple affine root $a_0:=-\theta+1$ to $F$.
The associated simple reflection $s_{a_0}$ will be denoted by $s_0$.
The positive and negative affine roots are
$R^{a,+}=(R+\mathbb{Z}_{>0})\cup R^{+}$ and $R^{a,-}=-R^{a,+}$
respectively. Define the length of $w\in W^a$ by
\[l(w)=\#\bigl(R^{a,+}\cap w^{-1}(R^{a,-})\bigr).
\]
The following proposition is well known.

\begin{prop}
Set $\Omega=\{w\in W^a \, | \, l(w)=0\}$.\\
{\bf (i)} $\Omega$ is an abelian subgroup of $W^a$, isomorphic to
$W^a/(W\ltimes Q^\vee)\simeq Y/Q^\vee$.\\
{\bf (ii)} $\omega\in\Omega$ permutes
the set $F^a$ of simple roots.
\end{prop}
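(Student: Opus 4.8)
The plan is to first identify \(\Omega\) concretely as the stabilizer of the positive affine roots, then to prove \textbf{(i)} by a minimal-coset-representative argument, and finally to read off \textbf{(ii)} from a short length computation. I would begin by unwinding the definition \(l(w)=\#\bigl(R^{a,+}\cap w^{-1}(R^{a,-})\bigr)\): since \(w^{-1}\) is a bijection of \(R^{a}=R^{a,+}\sqcup R^{a,-}\) intertwined with \(a\mapsto -a\), the condition \(l(w)=0\) is equivalent to \(w^{-1}(R^{a,-})\subseteq R^{a,-}\) and hence to \(w(R^{a,+})=R^{a,+}\). Thus \(\Omega=\{w\in W^{a}\mid w(R^{a,+})=R^{a,+}\}\), which is manifestly closed under composition and inversion, so a subgroup. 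Moreover, for \(\omega\in\Omega\) the maps \(\omega^{\pm 1}\) stabilize \(R^{a,\pm}\), so the definition of \(l\) gives at once \(l(\omega w)=l(w)=l(w\omega)\) for every \(w\in W^{a}\); I will also use the standard identity \(l(w)=l(w^{-1})\).

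For \textbf{(i)} I would use that \(W\ltimes Q^{\vee}\) is the Coxeter group on the simple affine reflections \(\{s_{a}\}_{a\in F^{a}}\) and that the restriction of \(l\) to it is its Coxeter length; in particular only the identity of \(W\ltimes Q^{\vee}\) has length \(0\), so \(\Omega\cap(W\ltimes Q^{\vee})=\{1\}\). Given \(w\in W^{a}\), pick \(v\) of minimal length in the coset \((W\ltimes Q^{\vee})w\) (left and right cosets agree, \(W\ltimes Q^{\vee}\) being normal). If \(l(v)>0\), then expanding some \(b\in R^{a,+}\) with \(v^{-1}(b)\in R^{a,-}\) as a nonnegative integral combination of \(F^{a}\) forces some \(a\in F^{a}\) with \(v^{-1}(a)\in R^{a,-}\), whence \(l(s_{a}v)<l(v)\) by the usual descent rule; since \(s_{a}v\) stays in the coset this contradicts minimality, so \(l(v)=0\) and \(v\in\Omega\). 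Uniqueness of such \(v\) in the coset is clear because two candidates differ by an element of \(\Omega\cap(W\ltimes Q^{\vee})=\{1\}\). Hence the composite \(\Omega\hookrightarrow W^{a}\twoheadrightarrow W^{a}/(W\ltimes Q^{\vee})\) is a bijective group homomorphism, so an isomorphism. To finish I would exhibit \(W^{a}/(W\ltimes Q^{\vee})\simeq Y/Q^{\vee}\) via \(wt_{y}\mapsto y+Q^{\vee}\): this is multiplicative since \((w_{1}t_{y_{1}})(w_{2}t_{y_{2}})=w_{1}w_{2}\,t_{w_{2}^{-1}(y_{1})+y_{2}}\) and \(w^{-1}(y)-y\in Q^{\vee}\) for all \(w\in W,\ y\in Y\) (indeed \(s_{\alpha}(y)-y=-\alpha(y)\alpha^{\vee}\in\mathbb{Z}\alpha^{\vee}\subset Q^{\vee}\), using \(\alpha\in R\subset X\) and that \(Y\) is dual to \(X\)), it is evidently onto, and its kernel is exactly \(W\ltimes Q^{\vee}\). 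As \(Y/Q^{\vee}\) is abelian, so is \(\Omega\).

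For \textbf{(ii)}, take \(\omega\in\Omega\) and \(a\in F^{a}\). Then \(\omega(a)\in R^{a,+}\) because \(\omega\) stabilizes \(R^{a,+}\), and using \(w s_{a}w^{-1}=s_{wa}\) together with the length invariance established above,
\[
l(s_{\omega(a)})=l(\omega s_{a}\omega^{-1})=l(s_{a})=1 .
\]
Since \(l(s_{b})=1\) for a positive affine root \(b\) precisely when \(b\in F^{a}\), this gives \(\omega(a)\in F^{a}\), so \(\omega(F^{a})\subseteq F^{a}\); applying the same to \(\omega^{-1}\in\Omega\) yields \(\omega(F^{a})=F^{a}\). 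All the genuinely substantive content here is classical root-system input — that the restriction of \(l\) to \(W\ltimes Q^{\vee}\) is the Coxeter word length (Iwahori--Matsumoto), that positive affine roots are \(\mathbb{Z}_{\ge 0}\)-combinations of \(F^{a}\), that \(l(s_{b})=1\iff b\in F^{a}\), and the descent criterion \(l(s_{a}w)<l(w)\iff w^{-1}(a)\in R^{a,-}\); once these are granted the proposition is pure coset bookkeeping, and the only point needing care is checking that the minimality argument in the proof of \textbf{(i)} indeed allows one to descend along a \emph{simple} affine root.
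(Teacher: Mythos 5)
Your proof is correct. The paper itself offers no argument here — it states the proposition as ``well known'' — and what you have written is a complete and accurate rendition of the standard one: identifying $\Omega$ as the stabilizer of $R^{a,+}$, extracting the length-zero representative of each coset of the normal Coxeter subgroup $W\ltimes Q^\vee$ via the simple-descent rule, identifying the quotient with $Y/Q^\vee$ through $wt_y\mapsto y+Q^\vee$ (the check that $w(y)-y\in Q^\vee$ is exactly the point that needs the hypothesis $R\subset X$), and reading off (ii) from $l(s_{\omega(a)})=l(\omega s_a\omega^{-1})=1$.
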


\section{The trigonometric Cherednik algebra}


\subsection{The algebra $H^a_L$}

Let $L=\mathbb{C}[\mathbf{c},\mathbf{k}]$
be the polynomial algebra in the indeterminates
$\mathbf{c}$ and the $\mathbf{k}_{\mathcal{O}}$,
where $\mathcal{O}$ runs over the set of $W^a$-orbits in $R^a$.
We write $\mathbf{k}_b=\mathbf{k}_{W^a(b)}$ for $b\in R^a$,
and $\mathbf{k}_0=\mathbf{k}_{a_0}$.

Depending on the root system $R$ and
the lattice $Y$, we thus have one, two or three commuting indeterminates
$\mathbf{k}_{\mathcal{O}}$. Concretely, if $R$ is not of
type $C_n$ ($n\geq 1$) then the $W^a$-orbits are
of the form $W^a(\beta)$ with $\beta$ running through a complete
set of representatives
of the $W$-orbits of $R$.
The same is true for $R$ of type $C_n$ if $\theta(Y)=\mathbb{Z}$.
For $R$ of type $C_n$ ($n\geq 1$) and $\theta(Y)=2\mathbb{Z}$ we have
two (if $n=1$) or three (if $n\geq 2$)
$W^a$-orbits in $R^a$, namely $W^a(a_0)$ and
the $W^a(\beta)$ with $\beta\in R$ representatives of the $W$-orbits in $R$
(cf. \cite{L}).

\begin{defi}\cite{C0,C}\label{H}
The trigonometric Cherednik algebra (also known as the
degenerate double affine Hecke algebra)
is the associative unital $L$-algebra $H^a_L$ satisfying:
\begin{enumerate}
\item[{\it (i)}] $H^a_L$ contains $S(V_L^*)$ and $L[W^a]$ as subalgebras.
\item[{\it (ii)}] The multiplication map defines an
isomorphism
\[S(V_L^*)\otimes_L L[W^a]\rightarrow H^a_L
\]
of $L$-modules.
\item[{\it (iii)}] The cross relations
\begin{equation}\label{cr}
s_a\cdot \xi-s_a^{(\mathbf{c})}(\xi)\cdot s_a=-\mathbf{k}_a \xi(Da^\vee),
\qquad \forall\, a\in F^a, \forall\, \xi\in V^*.
\end{equation}
\item[{\it (iv)}] $\omega\cdot \xi=\omega^{(\mathbf{c})}(\xi)\cdot \omega$
for $\omega\in\Omega$ and $\xi\in V^*$.
\end{enumerate}
\end{defi}
\begin{rema}
The cross relations for $\alpha\in F$ read
\[s_\alpha\cdot \xi-s_\alpha(\xi)\cdot s_\alpha=-\mathbf{k}_\alpha \xi(\alpha^\vee).
\]
For $a=a_0=-\theta+1$ it becomes
\[s_0\cdot\xi-(s_\theta(\xi)+\mathbf{c}\xi(\theta^\vee)1)\cdot s_0=
\mathbf{k}_0\xi(\theta^\vee).
\]
\end{rema}
The existence of $H^a_L$ needs proof; it follows from an explicit
realization of $H^a_L$ due to Cherednik \cite{C}, in terms of
Dunkl-Cherednik operators. For the sake of completeness
we will recall it in the next subsection.

In the remainder of this subsection we consider the trigonometric Cherednik
algebra with specialized parameters.
For $c\in\mathbb{C}$ and for a $W^a$-invariant function $k: R^a\rightarrow
\mathbb{C}$ (called a multiplicity function), we write $H^a(k,c)$
for the complex associative algebra obtained from $H^a_L$
by specializing $\mathbf{c}$ and $\mathbf{k}_a$ to $c$ and $k_a$,
respectively.
We call $c\in\mathbb{C}$ the level of $H^a(k,c)$.
The sub-algebra $H(k)$ of $H^a(k,c)$ generated
by $S(V^*_{\mathbb{C}})$ and $\mathbb{C}[W]$ is independent of $c$.
It is the degenerate affine Hecke algebra \cite{Dr, L}.

By induction on $l(w)$ we have
\begin{equation}\label{wxi}
w\cdot\xi=(w^{(c)}(\xi))\cdot w-\sum_{a\in R^{a,+}\cap w^{-1}R^{a,-}}k_a
\xi(Da^\vee)ws_a,\qquad \forall\, w\in W^a,\,\, \forall\, \xi\in V_{\mathbb{C}}^*
\end{equation}
in $H^a(k,c)$, cf. \cite[Prop. 1.1]{O}.
The cross relations in $H^a(k,c)$ between the simple reflections
$s_a$ ($a\in F^a$) and $p\in S(V_{\mathbb{C}}^*)$ can also be made
explicit. For this we first introduce
rescaled roots $a^{(c)}$ ($a=\alpha+m1\in R^a$) by
\[a^{(c)}=\alpha+cm1\in\hbox{Aff}(V_{\mathbb{C}}).
\]
Observe that $s_a^{(c)}=s_{a^{(c)}}$.
We also use the notation $s_{a^{(c)}}$ for the associated action
on $S(V_{\mathbb{C}}^*)\simeq P(V_{\mathbb{C}})$ by algebra automorphisms.
Observe that $w^{(c)}(a^{(c)})=(w(a))^{(c)}$
for $w\in W^a$ and $a\in R^a$.
With these notations the cross relations \eqref{cr}
in $H^a(k,c)$ imply
\begin{equation}\label{BGGform}
s_a\cdot p-s_{a^{(c)}}(p)\cdot s_a=k_a\left(\frac{s_{a^{(c)}}(p)-p}{a^{(c)}}
\right), \qquad \forall\, a\in F^a, \forall\, p\in S(V_{\mathbb{C}}^*).
\end{equation}
It follows from \eqref{BGGform} that the center
of the degenerate affine Hecke algebra $H(k)$
is the sub-algebra $S(V_{\mathbb{C}}^*)^W$ of $W$-invariant elements
in the symmetric algebra $S(V_{\mathbb{C}}^*)$ (see \cite{L} for details).

For $c\not=0$ we have $H^a(k,c)\simeq H^a(k/c,1)$ as algebras,
where $k/c$ is the multiplicity that takes value $k_a/c$ at $a\in R^a$.
The map $H^a(k,c)\rightarrow H^a(k/c,1)$ realizing the
algebra isomorphism is determined by $\xi\mapsto c\xi$ and
$w\mapsto w$ for $\xi\in V_{\mathbb{C}}^*$ and $w\in W^a$.
The center of $H^a(k,c)$ is trivial if $c\not=0$
(cf. \cite[Prop. 1.3.6]{AST} for $R$ of type $A$).

We denote $H^a(k):=H^a(k,0)$ for the trigonometric Cherednik
algebra at level $0$. Its center is studied in \cite{Ob}.
In this paper we only need the simple observation that
$S(V_{\mathbb{C}}^*)^W$ is contained in the center
of $H^a(k)$, in view of \eqref{BGGform}.
\begin{rema}
For the root system $R$ of type $A$
the spectrum of the center of $H^a(k)$ is explicitly
described in \cite{Ob}. It is called the trigonometric
Calogero-Moser space.
\end{rema}
In analogy with terminology for affine Lie algebras we call
$c=0$ the critical level (cf. \cite{Cbook, AST}).
The trigonometric Cherednik algebra
$H^a(k)$ at critical level is the main object of study in
this paper.

\subsection{The Cherednik representation}
For completeness we recall in this subsection the faithful representation
of $H^a_L$ in terms of Dunkl-Cherednik operators.
We start with two convenient lemmas for proving that
some $L[W^a]$-module $M$ and a suitable compatible family
of $L$-linear operators on $M$ give rise to a $H^a_L$-module structure
on $M$. The second lemma will
be used at a later stage with specialized parameters
to construct a representation of the trigonometric
Cherednik algebra $H^a(k)$ at critical level.
\begin{lem}\label{Dunklapproach}
Let $M$ be a left $L[W^a]$-module and $N\subseteq M$
a $L$-submodule which generates $M$ as a $L[W^a]$-module.
Suppose furthermore that
\begin{enumerate}
\item[{\bf (i)}] $T_\xi\in\textup{End}_{L}(M)$ is a family of
linear operators depending linearly on $\xi\in V^*$.
\item[{\bf (ii)}] The cross relations
\begin{equation*}
\begin{split}
s_aT_\xi-T_{s_a^{(\mathbf{c})}(\xi)}s_a
&=-\mathbf{k}_a\xi(Da^\vee)\textup{Id}_M,\qquad a\in F^a,\\
\omega T_\xi-T_{\omega^{(\mathbf{c})}(\xi)}\omega&=0,\qquad\qquad\qquad\qquad\,\,
\omega\in\Omega
\end{split}
\end{equation*}
are satisfied as endomorphisms of $M$,
where $T_{\xi+\lambda 1}:=T_\xi+\lambda\textup{Id}_M$ for
$\xi\in V^*$ and $\lambda\in L$.
\item[{\bf (iii)}] The kernel of the commutator $\lbrack T_\xi, T_\eta\rbrack$
contains $N$ for all $\xi,\eta\in V^*$.
\end{enumerate}
Then the $T_\xi$ ($\xi\in V^*$) pair-wise commute as endomorphisms
of $M$. Hence the $W^a$-action on $M$,
together with $\xi\mapsto T_\xi$,
turns $M$ into a $H^a_L$-module.
\end{lem}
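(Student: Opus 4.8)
The plan is to reduce everything to the single nontrivial claim that the operators $T_\xi$ pairwise commute on all of $M$, since once that is established the presentation of $H^a_L$ in Definition~\ref{H} together with hypothesis (ii) gives the $H^a_L$-module structure essentially by inspection. First I would spell out the latter reduction: define a map $S(V_L^*)\otimes_L L[W^a]\to\textup{End}_L(M)$ by sending a monomial $\xi_1\cdots\xi_r\otimes w$ to $T_{\xi_1}\cdots T_{\xi_r}\,w$ (the action of $w\in W^a$ on $M$ coming from the given $L[W^a]$-module structure). This is well defined on the symmetric algebra precisely because the $T_\xi$ commute, and by part~(ii) of Definition~\ref{H} it extends to a genuine $H^a_L$-action: the cross relations (iii)--(iv) of that definition are exactly hypothesis (ii) here, and the subalgebra relations in $S(V_L^*)$ and $L[W^a]$ hold by construction. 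So the whole content is the commutativity statement.

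To prove $[T_\xi,T_\eta]=0$ on $M$, the key observation is that the commutator is already known to vanish on $N$ by hypothesis (iii), and $N$ generates $M$ as an $L[W^a]$-module, so it suffices to show that the subspace $K_{\xi,\eta}:=\ker[T_\xi,T_\eta]$ (or rather the common kernel over all $\xi,\eta$) is stable under the $W^a$-action. Equivalently, I want to show that for each generator $s_a$ ($a\in F^a$) and each $\omega\in\Omega$, if $[T_\xi,T_\eta]$ kills $m$ for all $\xi,\eta$, then it kills $s_a m$ and $\omega m$ as well. This is where hypothesis (ii) does the work: the cross relations let me rewrite $[T_\xi,T_\eta]\,s_a$ in terms of $s_a$ times a combination of $[T_{\xi'},T_{\eta'}]$ plus lower-order terms that must cancel. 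Concretely, from $s_a T_\xi = T_{s_a^{(\mathbf{c})}(\xi)} s_a - \mathbf{k}_a\xi(Da^\vee)\textup{Id}$ one computes
\[
s_a\,[T_\xi,T_\eta] = [T_{s_a^{(\mathbf{c})}(\xi)},\,T_{s_a^{(\mathbf{c})}(\eta)}]\,s_a,
\]
the scalar correction terms dropping out because constants are central; the analogous identity with $\omega$ in place of $s_a$ is even cleaner since there is no correction term at all. Hence if $[T_{\xi'},T_{\eta'}]m=0$ for all $\xi',\eta'$, then applying the displayed identity with $\xi',\eta'$ running over $s_a^{(\mathbf{c})}(V^*)=V^*$ shows $[T_\xi,T_\eta]\,s_a m=0$ for all $\xi,\eta$, and likewise for $\omega m$.

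The main obstacle — and it is a mild one — is bookkeeping with the affine part: $s_a^{(\mathbf{c})}(\xi)$ for $a=a_0$ is not an element of $V^*$ but of $\textup{Aff}(V_L)$, namely $s_\theta(\xi)+\mathbf{c}\,\xi(\theta^\vee)1$, so one must carefully use the convention $T_{\xi+\lambda 1}:=T_\xi+\lambda\textup{Id}_M$ and check that the extra scalar piece, being a central constant, neither obstructs the rewriting of the commutator nor spoils the cancellation of correction terms. Once that is handled, the induction on $l(w)$ propagates commutativity of $[T_\xi,T_\eta]$ from $N$ to $W^a\cdot N = M$, completing the proof. I would close by remarking that this is the standard ``Dunkl operator'' criterion and that the faithful Cherednik representation recalled in the next subsection is the prototypical instance $M=P_L(V)$, $N=L$.
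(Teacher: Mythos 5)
Your proposal is correct and follows essentially the same route as the paper: the entire content is the conjugation identity $w\,[T_\xi,T_\eta]=[T_{(Dw)\xi},T_{(Dw)\eta}]\,w$, verified on the generators $s_a$ ($a\in F^a$) and $\omega\in\Omega$ via the cross relations (with the affine constants dropping out of the commutators), after which commutativity propagates from $N$ to $W^a\cdot N=M$. The only cosmetic difference is that the paper states the identity with $(Dw)\xi$ rather than $s_a^{(\mathbf{c})}(\xi)$, which is equivalent by your own observation that $T_{s_a^{(\mathbf{c})}(\xi)}$ and $T_{(Ds_a)\xi}$ differ by a central scalar.
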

\begin{proof}
The lemma is a direct consequence of the identities
\begin{equation}\label{towardscommutative}
w \lbrack T_\xi,T_{\eta}\rbrack=\lbrack T_{(Dw)\xi},T_{(Dw)\eta}\rbrack w,
\qquad \forall\, w\in W^a, \forall\, \xi,\eta\in V^*
\end{equation}
in $\textup{End}_{L}(M)$.
Formula \eqref{towardscommutative} is a consequence of properties {\bf (i)}
and {\bf (ii)} only. In fact, it suffices to establish
\eqref{towardscommutative} for
$w=s_a$ ($a\in F^a$) and for $w=\omega\in\Omega$, in which case it
follows by straightforward computations from {\bf (i)} and {\bf (ii)}.
\end{proof}
We have the following dual version of Lemma \ref{Dunklapproach}.
\begin{lem}\label{Dunklapproachdual}
Let $M$ be a left $L[W^a]$-module. Let $p: M\rightarrow N$
be a $L$-linear map to some $L$-module $N$ such that $\{0\}$ is
the only $L[W^a]$-submodule of $M$ contained in the kernel
$\textup{ker}(p)$ of $p$.

Suppose furthermore the existence of a family of
$L$-linear operators $T_\xi$ on $M$ satisfying
conditions {\bf (i)} and {\bf (ii)}
of Lemma \ref{Dunklapproach}.

If the image of the
commutator $\lbrack T_\xi, T_\eta\rbrack$ is contained in $\textup{ker}(p)$
for all $\xi,\eta\in V^*$, then the $T_\xi$ ($\xi\in V^*$)
pair-wise commute as endomorphisms
of $M$. In this situation the $W^a$-action on $M$,
together with $\xi\mapsto T_\xi$,
turns $M$ into a $H^a_L$-module.
\end{lem}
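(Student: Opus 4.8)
The plan is to deduce Lemma~\ref{Dunklapproachdual} from Lemma~\ref{Dunklapproach} by the same mechanism, rather than by repeating the argument from scratch. The crucial observation is that formula \eqref{towardscommutative} of the proof of Lemma~\ref{Dunklapproach} only uses hypotheses {\bf (i)} and {\bf (ii)}, which we are assuming here verbatim; hence \eqref{towardscommutative} holds in $\textup{End}_L(M)$ in the present setting as well. So for every $\xi,\eta\in V^*$ and every $w\in W^a$ we have $w\lbrack T_\xi,T_\eta\rbrack = \lbrack T_{(Dw)\xi},T_{(Dw)\eta}\rbrack w$ as endomorphisms of $M$.

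First I would introduce the $L$-submodule $K\subseteq M$ spanned by the images of all commutators, i.e. $K=\sum_{\xi,\eta\in V^*}\textup{im}\,\lbrack T_\xi,T_\eta\rbrack$; our goal is to show $K=\{0\}$. The second step is to check that $K$ is in fact a $L[W^a]$-submodule of $M$: given $m\in M$ and $w\in W^a$, applying \eqref{towardscommutative} gives $w\,\lbrack T_\xi,T_\eta\rbrack m = \lbrack T_{(Dw)\xi},T_{(Dw)\eta}\rbrack\, wm \in K$, and since $Dw$ runs over $W$ (a group) as $w$ ranges over $W^a$, while $\xi,\eta$ range over all of $V^*$, the pairs $((Dw)\xi,(Dw)\eta)$ still cover all of $V^*\times V^*$; thus $w\cdot K\subseteq K$ and, $w$ being invertible, $w\cdot K=K$. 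The third step is to invoke the hypothesis: by assumption the image of each $\lbrack T_\xi,T_\eta\rbrack$ lies in $\textup{ker}(p)$, so $K\subseteq\textup{ker}(p)$; but $K$ is a $L[W^a]$-submodule of $M$ contained in $\textup{ker}(p)$, and by the standing assumption the only such submodule is $\{0\}$. Hence $K=\{0\}$, which is exactly the statement that all commutators $\lbrack T_\xi,T_\eta\rbrack$ vanish on $M$, i.e. the $T_\xi$ pairwise commute.

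For the final assertion, once the $T_\xi$ pairwise commute, the map $\xi\mapsto T_\xi$ extends to an algebra homomorphism $S(V_L^*)\to\textup{End}_L(M)$; combined with the given $L[W^a]$-action and the cross relations of hypothesis {\bf (ii)}, which are precisely relations (iii) and (iv) in Definition~\ref{H} realized on $M$, the PBW-type decomposition (ii) of Definition~\ref{H} shows that these data assemble into a well-defined $H^a_L$-module structure on $M$ — this is the same verification already used (implicitly) at the end of Lemma~\ref{Dunklapproach}, so I would simply refer to it.

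I do not expect a genuine obstacle here; the only point requiring a moment's care is the second step, namely confirming that the span-of-images module $K$ really is $W^a$-stable — this hinges on the fact that $D:W^a\to W$ is surjective and that $V^*$ (hence $V^*\times V^*$) is $W$-stable, so that the reindexing $(\xi,\eta)\mapsto((Dw)\xi,(Dw)\eta)$ is a bijection of the index set. Everything else is a formal transcription of the proof of Lemma~\ref{Dunklapproach} with the roles of ``$N$ generates $M$'' and ``$N$ is in the kernel of the commutators'' replaced by the dual conditions ``images of commutators land in $\textup{ker}(p)$'' and ``$\textup{ker}(p)$ contains no nonzero $L[W^a]$-submodule''.
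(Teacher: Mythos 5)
Your proof is correct and is exactly the argument the paper intends: the paper's proof is literally ``analogous to the proof of Lemma \ref{Dunklapproach}'', and your dualization — replacing the $W^a$-stable common kernel of the commutators by the $W^a$-stable sum $K$ of their images, then killing $K$ via the hypothesis on $\textup{ker}(p)$ — is that analogue, resting as it should on the fact that \eqref{towardscommutative} uses only hypotheses {\bf (i)} and {\bf (ii)}.
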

\begin{proof}
Analogous to the proof of Lemma \ref{Dunklapproach}.
\end{proof}

Lemma \ref{Dunklapproach} can be used to verify that
$H^a_L$ admits a realization in terms of Dunkl-Cherednik operators
\cite{C}.
In the present set-up it involves some small
adjustments since we do not require $\mathbf{k}_0=\mathbf{k}_\theta$.
It relates to
the extension to nonreduced root systems from \cite{O}.

We write the standard basis of the group algebra $L[Y]$
as $\{e^y\}_{y\in Y}$. The algebra structure is then governed
by $e^ye^{y^\prime}=e^{y+y^\prime}$ and $e^0=1$. Interpreting
$L[Y]$ as the algebra of regular $L$-valued functions
on $V_{\mathbb{C}}^*/2\pi\sqrt{-1}X$, the basis element $e^y$
corresponds to the trigonometic function $\xi\mapsto e^{\xi(y)}$.

\begin{defi}(cf. \cite{Ob})
The trigonometric Weyl algebra $\mathcal{A}_L$
is the unique unital associative $L$-algebra satisfying
\begin{enumerate}
\item[{\it (i)}] $\mathcal{A}_L$ contains $S(V_L^*)$
and $L[Y]$ as subalgebras.
\item[{\it (ii)}] The multiplication map defines an
isomorphism
\[ S(V_L^*)\otimes_L L[Y]\rightarrow \mathcal{A}_L
\]
of $L$-modules.
\item[{\it (iii)}] The cross relations
\[\lbrack \xi, e^y\rbrack=\mathbf{c}\xi(y)e^y
\]
for all $\xi\in V^*$ and $y\in Y$.
\end{enumerate}
\end{defi}
The indeterminates $\mathbf{k}_a$ in $\mathcal{A}_L$ are
merely dummy parameters. We include them
in the definition of $\mathcal{A}_L$ to avoid ground ring extensions
at later stages.

The existence of $\mathcal{A}_L$ is immediate, since it can be realized
as the $L$-subalgebra of $\textup{End}_L(L[Y])$ generated
by $L[Y]$ (viewed as multiplication operators) and by
$\mathbf{c}\partial_\xi$ ($\xi\in V_{\mathbb{C}}^*$), where $\partial_\xi$
is the $L$-linear derivation $\partial_\xi e^y=\xi(y)e^y$ ($y\in Y$)
of $L[Y]$.

With $\mathbf{k}$ and $\mathbf{c}$ specialized
to a fixed multiplicity function $k: R\rightarrow\mathbb{C}$
and a level $0\not=c\in\mathbb{C}$,
the associated specialized complex algebra $\mathcal{A}(c)$
is the algebra of differential operators on the compact torus
$\sqrt{-1}V^*/2\pi\sqrt{-1}X$ with regular coefficients.
For $c=0$ it
is the algebra of regular functions on the cotangent bundle
of $\sqrt{-1}V^*/2\pi\sqrt{-1}X$. It inherets the structure
of a Poisson algebra from the semiclassical limit of
$\mathcal{A}(c)$ as $c\rightarrow 0$.

Let $\mathcal{A}_L^{(\delta)}$ be the right localization
of $\mathcal{A}_L$ at $\delta:=\prod_{\alpha\in R^+}(1-e^{-2\alpha^\vee})$.
It is easy to check that $\mathcal{A}_L^{(\delta)}$ is a ring
containing $\mathcal{A}_L$.
Denote $L[Y]^{(\delta)}=\mathcal{A}_L^{(\delta)}\otimes_{\mathcal{A}_L}L[Y]$.
The Weyl group $W$ acts naturally
by $L$-algebra automorphisms on $\mathcal{A}_L$ and
$\mathcal{A}_L^{(\delta)}$.
We write $\mathcal{A}_L^{(\delta)}\#W$ for the associated
smashed product $L$-algebra. It is isomorphic
to $\mathcal{A}_L^{(\delta)}\otimes_LL[W]$
as $L$-modules. The localized $\mathcal{A}_L^{(\delta)}$-module
$L[Y]^{(\delta)}$ is a faithful $\mathcal{A}_L^{(\delta)}\#W$-module.
We call it the basic representation of $\mathcal{A}_L^{(\delta)}\#W$.

To give Cherednik's realization of $H^a_L$ as $L$-subalgebra of
$\mathcal{A}_L^{(\delta)}\#W$
it is convenient to use the following reparametrization of $\mathbf{k}$.
Define $\mathbf{l}_\alpha$ ($\alpha\in R$) by
\begin{equation*}
\mathbf{l}_\alpha=
\begin{cases}
\mathbf{k}_\alpha,\qquad &\alpha\not\in W\theta,\\
\mathbf{k}_0,\qquad &\alpha\in W\theta
\end{cases}
\end{equation*}
and set
$\rho(\mathbf{k})=\frac{1}{2}\sum_{\alpha\in R^+}\mathbf{k}_\alpha\alpha^\vee\in
V_L$.

For $\xi\in V^*$ we define the Dunkl-Cherednik operator \cite{C,O}
by
\begin{equation}\label{DCoper}
\mathbf{D}_\xi:= \xi+\sum_{\alpha\in R^+}\xi(\alpha^\vee)
\left(\frac{\mathbf{k}_\alpha+\mathbf{l}_\alpha e^{-\alpha^\vee}}
{1-e^{-2\alpha^\vee}}\right)(1-s_\alpha)-\xi(\rho(\mathbf{k}))\in
\mathcal{A}_L^{(\delta)}\#W.
\end{equation}
Under the basic representation of $\mathcal{A}_L^{(\delta)}\#W$ it restricts
to an operator on $L[Y]$.

If $\theta(Y)=\mathbb{Z}$ then
$a_0\in W^a\theta$, hence $\mathbf{k}_0=\mathbf{k}_\theta$ and
$\mathbf{l}_\alpha=\mathbf{k}_\alpha$
for all $\alpha\in R$. In this case
\[\mathbf{D}_\xi=\xi+\sum_{\alpha\in R^+}\mathbf{k}_\alpha\xi(\alpha^\vee)
\frac{1}{1-e^{-\alpha^\vee}}(1-s_\alpha)-\xi(\rho(\mathbf{k})),
\]
which is the Dunkl-Cherednik operator associated to
the reduced root system $R$ (see \cite{C}).
If $\theta(Y)=2\mathbb{Z}$ then $R$ is of type $C_n$ for some $n\geq 1$.
In this case $\mathbf{D}_\xi$ is the Dunkl-Cherednik
operator associated to the nonreduced root system of type $\textup{BC}_n$
(see \cite{O}).

We now give Cherednik's
well known realization of $H^a_L$ as subalgebra of
$\mathcal{A}_L^{(\delta)}\#W$. It ensures that the
$L$-algebra $H^a_L$, as defined in Definition \ref{H}, exists.
\begin{thm}\label{Cherrepr}\cite{C}
The assignments
\begin{equation}\label{assign}
\begin{split}
\xi&\mapsto \mathbf{D}_\xi,\qquad \xi\in V^*,\\
w&\mapsto w,\qquad\,\,\, w\in W,\\
t_y&\mapsto e^y,\qquad\,\, y\in Y
\end{split}
\end{equation}
uniquely extend to an injective $L$-algebra homomorphism
$H^a_L\rightarrow \mathcal{A}_L^{(\delta)}\#W$.
\end{thm}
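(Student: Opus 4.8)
The plan is to verify the hypotheses of Lemma \ref{Dunklapproach}, applied with $M=L[Y]^{(\delta)}$ (or more economically with $M=L[Y]$ together with $N=L[Y]$, since the Dunkl-Cherednik operators preserve $L[Y]$ under the basic representation). First I would check condition {\bf (i)}: each $\mathbf{D}_\xi$ lies in $\mathcal{A}_L^{(\delta)}\#W$ and depends $L$-linearly on $\xi\in V^*$, which is immediate from the defining formula \eqref{DCoper}. Condition {\bf (iii)}, that $\lbrack\mathbf{D}_\xi,\mathbf{D}_\eta\rbrack$ annihilates $N=L[Y]$, is the classical commutativity of the Dunkl-Cherednik operators; I would either cite Cherednik \cite{C} and Opdam \cite{O} (the latter for the nonreduced $\textup{BC}_n$ case arising when $\theta(Y)=2\mathbb{Z}$), or sketch the standard argument via intertwiner/degeneration techniques. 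The bulk of the work is condition {\bf (ii)}: the cross relations
\[
s_a\mathbf{D}_\xi-\mathbf{D}_{s_a^{(\mathbf{c})}(\xi)}s_a=-\mathbf{k}_a\xi(Da^\vee)\textup{Id},\qquad a\in F^a,
\]
together with $\omega\mathbf{D}_\xi=\mathbf{D}_{\omega^{(\mathbf{c})}(\xi)}\omega$ for $\omega\in\Omega$.

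For $a=\alpha\in F$ a simple (finite) root, the relation reads $s_\alpha\mathbf{D}_\xi-\mathbf{D}_{s_\alpha(\xi)}s_\alpha=-\mathbf{k}_\alpha\xi(\alpha^\vee)$, and this is a direct computation inside $\mathcal{A}_L^{(\delta)}\#W$: one plugs in \eqref{DCoper}, uses that $s_\alpha$ permutes $R^+\setminus\{\alpha\}$ and sends $\alpha\mapsto-\alpha$, and tracks the single anomalous term coming from the $\alpha$-summand in the Dunkl operator. For the affine simple root $a_0=-\theta+1$, I would exploit the identity $s_0=t_{\theta^\vee}s_\theta$, i.e. under the assignment \eqref{assign} the operator $s_0$ acts as $e^{\theta^\vee}s_\theta$; then the affine cross relation becomes a statement about $e^{\theta^\vee}s_\theta\mathbf{D}_\xi$ versus $\mathbf{D}_{s_\theta(\xi)+\mathbf{c}\xi(\theta^\vee)1}e^{\theta^\vee}s_\theta$, which one reduces to the already-established finite cross relation for $s_\theta$ (conjugating by $w\in W$ moves $s_\alpha$ to $s_\theta$ and transforms $\mathbf{D}_\xi$ correctly by $W$-equivariance, $w\mathbf{D}_\xi w^{-1}=\mathbf{D}_{w\xi}$) plus the commutation $\lbrack\xi,e^{\theta^\vee}\rbrack=\mathbf{c}\xi(\theta^\vee)e^{\theta^\vee}$ in the trigonometric Weyl algebra. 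The subtle bookkeeping here is the presence of $\mathbf{l}_\alpha$: when $\theta(Y)=2\mathbb{Z}$ one genuinely has $\mathbf{k}_0\neq\mathbf{k}_\theta$, so the $e^{-\theta^\vee}$-part of the $\theta$-summand in $\mathbf{D}_\xi$ conjugates under $t_{\theta^\vee}$ in a way that exchanges the roles of $\mathbf{k}_\theta$ and $\mathbf{l}_\theta=\mathbf{k}_0$ — this is exactly why the reparametrization was introduced, and checking that it makes the affine relation come out with $-\mathbf{k}_0\xi(\theta^\vee)$ on the right is the one place requiring care. The relations for $\omega\in\Omega$ follow since each $\omega=t_{y}w$ with $w\in W$ is length zero, hence acts on $L[Y]$ by an algebra automorphism permuting the $e^{\alpha^\vee}$ compatibly with the $W$-action, so $\omega\mathbf{D}_\xi\omega^{-1}=\mathbf{D}_{D\omega(\xi)}$ up to the translation-induced shift encoded in $\omega^{(\mathbf{c})}$.

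Granting {\bf (i)}--{\bf (iii)}, Lemma \ref{Dunklapproach} produces a well-defined $L$-algebra homomorphism $\Phi:H^a_L\to\mathcal{A}_L^{(\delta)}\#W$ extending \eqref{assign}. It remains to prove injectivity. Here I would use the PBW-type decomposition: by Definition \ref{H}(ii), $\{p\cdot w : p\in \text{basis of } S(V_L^*),\ w\in W^a\}$ is an $L$-basis of $H^a_L$, and $\Phi(p\cdot w)=\Phi(p)\cdot w$ where $\Phi(p)\in\mathcal{A}_L^{(\delta)}$ has leading symbol (top-degree part in the $V_L^*$-filtration of $\mathcal{A}_L^{(\delta)}$) equal to $p$ itself, since $\mathbf{D}_\xi=\xi+(\text{lower order})$. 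A filtration/leading-term argument then shows that a nontrivial $L$-linear combination of the $\Phi(p\cdot w)$ cannot vanish: project onto the associated graded, where the images become linearly independent because the symbols $p$ are independent and the $w\in W^a$ are group elements acting by distinct automorphisms. The main obstacle is the affine cross-relation computation with the split parameters $\mathbf{k}_0\neq\mathbf{k}_\theta$; the injectivity, by contrast, is a soft leading-symbol argument. Since all of this is due to Cherednik \cite{C} (with the nonreduced modification in \cite{O}), I would present the verification compactly and refer to those sources for the full details.
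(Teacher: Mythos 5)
Your proposal is correct in outline and matches the paper's strategy (Lemma \ref{Dunklapproach} for existence, direct computation of the cross relations, a leading-symbol argument for injectivity), but it uses the lemma in a way that forfeits its main benefit. You take $N=L[Y]$ (or work in $L[Y]^{(\delta)}$ with no smaller $N$), so that hypothesis {\bf (iii)} of Lemma \ref{Dunklapproach} becomes the full pairwise commutativity of the Dunkl--Cherednik operators --- exactly the hard fact the lemma is designed to \emph{produce} rather than consume --- and you are forced to import it from \cite{C,O}. The paper instead takes $N=Le^0$, the constants. This works because $Le^0$ generates $L[Y]$ as an $L[W^a]$-module (the translations $t_y$ act by multiplication by $e^y$), and hypothesis {\bf (iii)} is then a triviality: the reflection terms $(1-s_\alpha)$ in \eqref{DCoper} annihilate constants and $\partial_\xi e^0=0$, so $\mathbf{D}_\xi e^0=-\xi(\rho(\mathbf{k}))e^0$ and $\lbrack\mathbf{D}_\xi,\mathbf{D}_\eta\rbrack e^0=0$ by symmetry in $\xi$ and $\eta$. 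The identity \eqref{towardscommutative}, which depends only on the cross relations, then propagates the vanishing of the commutators from $Le^0$ to every translate $wLe^0$ and hence to all of $L[Y]$; commutativity of the Dunkl--Cherednik operators thus falls out as a corollary of the cross relations instead of being an extra input. (If one already grants commutativity everywhere, as you do, the lemma is not needed at all and one can appeal directly to the defining relations.) The rest of your write-up is consistent with what the paper delegates to ``direct computations (cf.\ \cite{OL})'' and ``a standard argument'': the reduction of the affine cross relation to the finite one via $s_0=t_{\theta^\vee}s_\theta$ together with $\lbrack\xi,e^{\theta^\vee}\rbrack=\mathbf{c}\xi(\theta^\vee)e^{\theta^\vee}$, the bookkeeping with $\mathbf{l}_\theta=\mathbf{k}_0$ when $\theta(Y)=2\mathbb{Z}$, and the PBW/leading-symbol proof of injectivity are all sound.
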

\begin{proof}
We sketch a proof based on Lemma \ref{Dunklapproach}.
Composing with the basic representation of $\mathcal{A}_L^{(\delta)}\#W$ we view
the right hand sides of \eqref{assign} as elements in
$\textup{End}_L(L[Y])$. The verification that the assignments
extend to an $L$-algebra homomorphism then reduces to the cross relations by
Lemma \ref{Dunklapproach} applied to the $L[W^a]$-module $L[Y]$,
$T_\xi=\mathbf{D}_\xi$ ($\xi\in V^*$) and $N=Le^0$. The cross relations
can be verified by direct computations (cf. \cite{OL}).
Injectivity follows by a standard argument.
\end{proof}
We write
$p(\mathbf{D})$ for the element in
$\mathcal{A}_L^{(\delta)}\#W$ corresponding to $p\in S(V_L^*)$.

Specializing $\mathbf{k}$ to a multiplicity function
$k: R\rightarrow\mathbb{C}$
and the level $\mathbf{c}$ to a noncritical value $0\not=c\in\mathbb{C}$,
Theorem \ref{Cherrepr} gives rise to the faithful Cherednik representation
of $H^a(k,c)$ on $\mathbb{C}[Y]$ in which $\xi\in V_{\mathbb{C}}^*$
acts by the Dunkl-Cherednik operator
\[D_\xi:=c\partial_\xi+\sum_{\alpha\in R^+}\xi(\alpha^\vee)
\left(\frac{k_\alpha+l_\alpha e^{-\alpha^\vee}}
{1-e^{-2\alpha^\vee}}\right)(1-s_\alpha)-\xi(\rho(k))\in
\textup{End}_{\mathbb{C}}(\mathbb{C}[Y])
\]
(with the obvious meaning of $l_\alpha$ and $\rho(k)$).
The corresponding differential-reflection operators $p(D)$
restrict to endomorphisms of $\mathbb{C}[Y]^W$ if
$p\in S(V_{\mathbb{C}}^*)^W$, in which case it acts as a
differential operator. The resulting commuting differential
operators $\{p(D)\}_{p\in S(V_{\mathbb{C}}^*)^W}$ on $\mathbb{C}[Y]^W$ are,
up to a gauge factor, the conserved quantum integrals
of the quantum trigonometric Calogero-Moser
system associated to $R$ (respectively the nonreduced root system of type
$\textup{BC}_n$)
if $\theta(Y)=\mathbb{Z}$ (respectively $\theta(Y)=2\mathbb{Z}$), see
\cite{OComp} and \cite[Part I \S 2.2]{HS}.
Specialized at critical level $c=0$ the
$\{p(D)\}_{p\in S(V_{\mathbb{C}}^*)^W}$ relate to the classical conserved
integrals of the trigonometric Calogero-Moser system.

At critical level $c=0$ various other representations of $H^a(k)$
are known that involve Dunkl type operators,
see e.g. \cite{C0, Cbook, EOS}.
In \cite{C0, Cbook} Dunkl operators with infinite
reflection terms are used
(it relates to quantum elliptic Calogero-Moser systems).
In \cite{EOS} representations of $H^a(k)$ are considered that involve
Dunkl type operators involving jumps over the affine root hyperplanes.
It relates to quantum integrable Calogero-Moser type systems with delta
function potentials. In Section \ref{basic} we 
generalize the latter representations.
It allows us
to include quantum spin-particle systems with delta function
potentials in the present framework.
In these cases the action of the
commutative subgroup $Y$ of $W^a$ is by translations.
This is a key difference to Theorem \ref{Cherrepr},
where $Y$ acts by multiplication operators.


\subsection{Intertwiners}

The algebra $H^a(k,c)$ gives rise to a large supply of
nontrivial $W^a$-cocycles. The construction uses the
normalized intertwiners associated to $H^a(k,c)$.

Let $H^a(k,c)_{loc}$ be the localized
trigonometric Cherednik algebra obtained
by right-adjoining the inverses of $a^{(c)}+k_a$ to $H^a(k,c)$ for all
$a\in R^a$. Denote $H^a(k,c)_{loc}^\times$ for the group
of units in $H^a(k,c)_{loc}$.
\begin{prop}\cite{C}\label{intertwiners}
There exists a unique group homomorphism
$W^a\rightarrow H^a(k,c)_{loc}^\times$, denoted by $w\mapsto I_w^{k,c}$,
satisfying
\begin{equation*}
\begin{split}
I_{s_a}^{k,c}&=(s_a\cdot a^{(c)}+k_a)\cdot (a^{(c)}-k_a)^{-1},\qquad a\in F^a,\\
I_{\omega}^{k,c}&=\omega,\qquad\qquad\quad\qquad\qquad\qquad\qquad
\quad \omega\in\Omega.
\end{split}
\end{equation*}
Furthermore,
\[I_w^{k,c}\cdot p=w^{(c)}(p)\cdot I_w^{k,c}\qquad\forall\, w\in W^a,
\forall\, p\in S(V_{\mathbb{C}}^*)
\]
in $H^a(k,c)_{loc}$.
\end{prop}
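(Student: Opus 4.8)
The plan is to first verify the defining braid-type relations among the proposed generators $I_{s_a}^{k,c}$ ($a\in F^a$) and $I_\omega^{k,c}=\omega$ ($\omega\in\Omega$), so that the assignment extends to a group homomorphism on $W^a$, and then to check the intertwining property $I_w^{k,c}\cdot p=w^{(c)}(p)\cdot I_w^{k,c}$. For the first part I would begin with the simplest relation, $(I_{s_a}^{k,c})^2=1$ for $a\in F^a$: using the cross relation \eqref{BGGform} in the form $s_a\cdot a^{(c)}=(s_{a^{(c)}}(a^{(c)}))\cdot s_a+k_a(\cdots)=-a^{(c)}\cdot s_a-2k_a$ (noting $s_{a^{(c)}}(a^{(c)})=-a^{(c)}$ and computing the correction term), one sees that $(s_a\cdot a^{(c)}+k_a)=-(a^{(c)}-k_a)s_a$, and since $a^{(c)}-k_a$ is central up to the $s_a$-twist inside the localized algebra, a short computation gives $(I_{s_a}^{k,c})^2=1$. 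This is really the key local computation; everything else is bookkeeping around it.

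Next I would establish that the $I_{s_a}^{k,c}$ satisfy the braid relations of $W^a$, i.e. $\underbrace{I_{s_a}I_{s_b}I_{s_a}\cdots}_{m_{ab}}=\underbrace{I_{s_b}I_{s_a}I_{s_b}\cdots}_{m_{ab}}$ for $a,b\in F^a$ with $m_{ab}$ the order of $s_as_b$, together with $\omega I_{s_a}^{k,c}\omega^{-1}=I_{s_{\omega(a)}}^{k,c}$ for $\omega\in\Omega$ (using that $\omega$ permutes $F^a$, by the Proposition in Section \ref{Rootsection}). The cleanest route is to reduce to rank-two sub-systems: the relevant subgroup generated by $s_a,s_b$ is a finite (dihedral) or affine-rank-two Weyl group, and the braid relation for the intertwiners in that setting is the classical computation for the degenerate affine Hecke algebra — one writes both sides as $(\text{product of }s\text{'s})\cdot(\text{rational function in }S(V_{\mathbb C}^*))$ and compares, or invokes that the intertwiners are known to realize such relations (cf. \cite{C}, \cite{O}). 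I would also handle the $\Omega$-conjugation relation directly from $\omega^{(c)}(a^{(c)})=(\omega(a))^{(c)}$ and $\omega\cdot a^{(c)}=\omega^{(c)}(a^{(c)})\cdot\omega$ (property (iv) of Definition \ref{H}). Since $W^a$ is presented by the simple reflections $s_a$ ($a\in F^a$) modulo the braid relations, together with $\Omega$ acting by the diagram permutation, these relations suffice to produce the homomorphism; uniqueness is automatic since the $s_a$ and $\omega$ generate $W^a$.

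For the intertwining property $I_w^{k,c}\cdot p=w^{(c)}(p)\cdot I_w^{k,c}$, it is enough by multiplicativity (and the cocycle-like behaviour $w^{(c)}(p)$ under composition, using $(vw)^{(c)}(p)=v^{(c)}(w^{(c)}(p))$) to check it on the generators. For $\omega\in\Omega$ it is exactly property (iv). For $s_a$ one computes, using \eqref{BGGform}: $I_{s_a}^{k,c}p=(s_a a^{(c)}+k_a)(a^{(c)}-k_a)^{-1}p$, move $p$ past $(a^{(c)}-k_a)^{-1}$ freely (it lies in $S(V_{\mathbb C}^*)_{loc}$), then past $s_a a^{(c)}$ via the cross relation, and collect terms; the rational-function corrections from \eqref{BGGform} are engineered precisely so that the answer is $s_{a^{(c)}}(p)\cdot I_{s_a}^{k,c}$, and $s_{a^{(c)}}=s_a^{(c)}$. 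The main obstacle I anticipate is the braid-relation verification: it is the one place where a genuine identity in the noncommutative algebra must be proved rather than deduced formally, and handling the affine (rank-two) cases $\tilde A_1$ and mixed cases uniformly with the present nonstandard normalization (where $\mathbf k_0\neq\mathbf k_\theta$ is allowed) requires care. I would either reduce carefully to the rank-two parabolic subalgebras and cite the corresponding result for the (possibly nonreduced) degenerate affine Hecke algebra from \cite{C,O}, or grind the dihedral computation once and note it is insensitive to the splitting of multiplicities across $W^a$-orbits.
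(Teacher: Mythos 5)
The paper offers no proof of this proposition --- it is quoted from \cite{C} and stated without argument --- so there is nothing in the text to compare against line by line. Your overall strategy (verify the quadratic relations, the finite braid relations, the $\Omega$-conjugation relation $\omega I_{s_a}^{k,c}\omega^{-1}=I_{s_{\omega(a)}}^{k,c}$, and then the intertwining property on generators, using that $W^a=\Omega\ltimes(W\ltimes Q^\vee)$ with $W\ltimes Q^\vee$ a Coxeter group on $F^a$) is the standard route and is sound. One of your worries is moot: in the Coxeter presentation the only braid relations are $(s_as_b)^{m_{ab}}=1$ with $m_{ab}$ \emph{finite}, so every braid relation to be checked lives in a finite dihedral parabolic, i.e.\ in a rank-two degenerate affine Hecke algebra (possibly with two distinct parameters when $a,b$ lie in different $W^a$-orbits); there is no genuinely ``affine rank-two'' case, since $m_{01}=\infty$ for $\tilde A_1$ means there is simply no relation to verify there.

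However, the displayed identity in your ``key local computation'' is false. From \eqref{BGGform} with $p=a^{(c)}$ one gets $s_a\cdot a^{(c)}=-a^{(c)}\cdot s_a-2k_a$, hence
\[
s_a\cdot a^{(c)}+k_a=-\bigl(a^{(c)}\cdot s_a+k_a\bigr),
\]
which is \emph{not} $-(a^{(c)}-k_a)\cdot s_a$; the two differ by $k_a(s_a+1)$. Had your identity been correct, $(I_{s_a}^{k,c})^2=1$ would collapse to a one-line triviality, which should have been a warning. The repair is routine but should be recorded, since this is the computation everything else rests on. First prove the generator case of the intertwining property in the form $(s_a\cdot a^{(c)}+k_a)\cdot p=s_{a^{(c)}}(p)\cdot(s_a\cdot a^{(c)}+k_a)$ for $p\in S(V_{\mathbb{C}}^*)$ (apply \eqref{BGGform} to the product $a^{(c)}p$). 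Taking $p=a^{(c)}-k_a$ and rearranging gives $I_{s_a}^{k,c}=(s_a\cdot a^{(c)}+k_a)\cdot(a^{(c)}-k_a)^{-1}=-\bigl(a^{(c)}+k_a\bigr)^{-1}\cdot(s_a\cdot a^{(c)}+k_a)$, whence
\[
\bigl(I_{s_a}^{k,c}\bigr)^2=-\bigl((a^{(c)})^2-k_a^2\bigr)^{-1}\cdot\bigl(s_a\cdot a^{(c)}+k_a\bigr)^2,
\]
using that $(a^{(c)})^2-k_a^2$ is $s_{a^{(c)}}$-invariant and hence commutes with $s_a\cdot a^{(c)}+k_a$. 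A direct expansion with $s_a\cdot a^{(c)}\cdot s_a\cdot a^{(c)}=-(a^{(c)})^2-2k_a\,s_a\cdot a^{(c)}$ yields $\bigl(s_a\cdot a^{(c)}+k_a\bigr)^2=-\bigl((a^{(c)})^2-k_a^2\bigr)$, so $(I_{s_a}^{k,c})^2=1$. With this correction the rest of your plan (braid relations via the rank-two reduction, $\Omega$-relations from Definition \ref{H}{\it (iv)} and $\omega^{(c)}(a^{(c)})=(\omega(a))^{(c)}$, and multiplicativity of the intertwining property) goes through.
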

The $I_w^{k,c}$ ($w\in W^a$) are called the normalized
intertwiners associated to $H^a(k,c)$. Observe that
the $I_w^{k,c}$ for $w\in W$ are independent of the level $c$.

Consider the set $\mathcal{S}_{k,c}$
consisting of $t\in V_{\mathbb{C}}$
satisfying $a^{(c)}(t)\not=k_a$ for all $a\in R^a$. Note that
$S_{k,c}$ is invariant for the action $t\mapsto
w^{(c)}(t)$ of $W^a$ on $V_{\mathbb{C}}$. For
$t\in\mathcal{S}_{k,c}$ consider the character
\[\chi_t: S(V_{\mathbb{C}}^*)_{loc}\rightarrow \mathbb{C},\qquad
p\mapsto p(t),
\]
where $S(V_{\mathbb{C}}^*)_{loc}$ is the localized algebra obtained by adjoining
the inverses of $a^c+k_a$ to $S(V_{\mathbb{C}}^*)_{loc}$ for all $a\in R^a$
(which canonically is a sub-algebra of $H^a(k,c)_{loc}$).
The map $w\mapsto w\otimes_{\chi_t}1$ for $w\in W^a$
gives a vector space identification between the group algebra $\mathbb{C}[W^a]$
and the principal $H^a(k,c)_{loc}$-module
$M(t):=\hbox{Ind}_{S(V_{\mathbb{C}}^*)_{loc}}^{H^a(k,c)_{loc}}(\chi_t)$.
For $w\in W^a$ and $t\in\mathcal{S}_{k,c}$
denote $I_{w}^{k,c}(t)$
for the element in the group algebra $\mathbb{C}[W^a]$ associated to
$I_w^{k,c}\otimes_{\chi_t}1\in M(\lambda)$. In other words,
$I_w^{k,c}(t)=\sum_{v\in W^a}p_v^w(t)v$ if the normalized intertwiner
$I_w^{k,c}$ expands as $I_w^{k,c}=\sum_{v\in W^a}v\cdot p_v^w$
($p_v^w\in S(V_{\mathbb{C}}^*)_{loc}$) in $H^a(k,c)_{loc}$.
The $I_w^{k,c}(t)$ satisfy (and are uniquely characterized by)
\begin{equation}\label{cocyclecondition}
\begin{split}
I_{s_a}^{k,c}(t)&=\frac{a^{(c)}(t)s_a+k_a}{a^{(c)}(t)-k_a},\\
I_{\omega}^{k,c}(t)&=\omega,\\
I_{\sigma\tau}^{k,c}(t)&=I_{\sigma}^{k,c}(\tau^{(c)}(t))
I_\tau^{k,c}(t)
\end{split}
\end{equation}
for $a\in F^a$, $\omega\in\Omega$ and $\sigma,\tau\in W^a$.
\begin{rema}\label{YBE}
The cocycle $\{I_w^{k,c}(t)\}_{w\in W^a}$ gives rise to unitary solutions of
generalized Yang-Baxter equations with spectral parameters
(see \cite{Chercoc} for the general theory)
and plays a key role in the description of the Bethe ansatz
equations associated to quantum spin-particle systems with delta function
interactions. For the latter application one is forced to consider
the cocycle at critical level $c=0$. The basic example is
for $R$ of type $A_{n-1}$ and
$X=\bigoplus_{j=1}^n\mathbb{Z}\epsilon_j$, in which case we have
discussed these observations in detail in the introduction.
One of the main goals in the present
paper is to give similar interpretations of the
cocycle $\{I_w^{k,0}(t)\}_{w\in W^a}$ for arbitrary affine Weyl groups
$W^a$.
\end{rema}
Denote
\[i_w^{k,c}(t)=\chi\bigl(I_w^{k,c}(t)\bigr),\qquad
w\in W^a,
\]
where $\chi: \mathbb{C}[W^a]\rightarrow\mathbb{C}$
is the algebra homomorphism mapping $w$ to $1$ for all $w\in W^a$.
By induction on the length $l(w)$ of $w\in W^a$ we have
\begin{equation}\label{explicitc}
i_w^{k,c}(t)=\prod_{a\in R^{a,+}\cap w^{-1}R^{a,-}}
\frac{a^{(c)}(t)+k_a}{a^{(c)}(t)-k_a},\qquad w\in W^a.
\end{equation}
In particular,
\[
i_w^{k,c}(t)=\prod_{\alpha\in R^+\cap w^{-1}R^-}\frac{\alpha(t)+k_\alpha}
{\alpha(t)-k_\alpha},\qquad w\in W,
\]
which is independent of $c$.
At critical level $c=0$ we write
$I_w^k(t)=I_w^{k,0}(t)$ and $i_w^k(t)=
i_w^{k,0}(t)$. We furthermore set
$I_y^k(t)=I_{t_y}^k(t)$ and $i_y^k(t)=i_{t_y}^k(t)$
for $y\in Y$.
Let $\mathbb{C}[W^a]^\times$ be the group of units in $\mathbb{C}[W^a]$.
\begin{prop}\label{explicitcocycle}
Let $t\in\mathcal{S}_k:=\mathcal{S}_{k,0}$.\\
{\bf (i)} The map
\[y\mapsto I_y^k(t)
\]
defines a group homomorphism $Y\rightarrow \mathbb{C}[W^a]^\times$.\\
{\bf (ii)} Suppose that $\theta(Y)=\mathbb{Z}$. Then
\begin{equation}\label{ikred}
i_y^k(t)=\prod_{\alpha\in R^+}\left(\frac{\alpha(t)+k_\alpha}
{\alpha(t)-k_\alpha}\right)^{\alpha(y)}
\end{equation}
for $y\in Y$.\\
{\bf (iii)} Suppose that $\theta(Y)=2\mathbb{Z}$. Then
\begin{equation}\label{iknonred}
i_y^k(t)=\prod_{\stackrel{\alpha\in R^+:}{\alpha\not\in W\theta}}
\left(\frac{\alpha(t)+k_\alpha}{\alpha(t)-k_\alpha}\right)^{\alpha(y)}
\prod_{\stackrel{\beta\in R^+:}{\beta\in W\theta}}
\left(\frac{\beta(t)+k_\theta}
{\beta(t)-k_\theta}\right)^{\frac{\beta(y)}{2}}
\left(\frac{\beta(t)+k_0}{\beta(t)-k_0}\right)^{\frac{\beta(y)}{2}}
\end{equation}
for $y\in Y$.
\end{prop}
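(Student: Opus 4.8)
The plan is to deduce all three statements from the cocycle relation \eqref{cocyclecondition}, the explicit product formula \eqref{explicitc}, and a computation of the inversion set of a translation. For \textbf{(i)} I would specialize the cocycle identity of \eqref{cocyclecondition} to $\sigma=t_y$, $\tau=t_{y'}$: since $\tau^{(0)}(t)=(D\tau)(t)=t$ for a pure translation and $t_yt_{y'}=t_{y+y'}$ in $W^a$, it collapses to $I_{y+y'}^k(t)=I_y^k(t)I_{y'}^k(t)$. Together with $I_0^k(t)=I_1^k(t)=1$ this shows that $y\mapsto I_y^k(t)$ is a group homomorphism $Y\to\mathbb{C}[W^a]$, every value being a unit with inverse $I_{-y}^k(t)$, which is \textbf{(i)}; applying the algebra homomorphism $\chi$ likewise makes $y\mapsto i_y^k(t)$ a group homomorphism $Y\to\mathbb{C}^\times$.

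For \textbf{(ii)} and \textbf{(iii)} I would first reduce to \emph{dominant} $y\in Y$, i.e.\ $\alpha(y)\ge 0$ for all $\alpha\in R^+$: every element of $Y$ is a difference of two dominant elements of $Y$ (standard, as $Y$ is a full lattice), and the right-hand sides of \eqref{ikred} and \eqref{iknonred} are multiplicative in $y$, so the homomorphism property of $y\mapsto i_y^k(t)$ established above propagates the asserted identities from dominant $y$ to all of $Y$. For dominant $y$, a direct check from $R^{a,+}=(R+\mathbb{Z}_{>0})\cup R^+$ and the translation formula \eqref{help} gives
\[
R^{a,+}\cap (t_y)^{-1}R^{a,-}=\{\alpha+m1 : \alpha\in R^+,\ 0\le m\le \alpha(y)-1\},
\]
with an empty contribution whenever $\alpha(y)=0$. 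Substituting this into \eqref{explicitc} at level $c=0$, where $(\alpha+m1)^{(0)}=\alpha$ and hence $(\alpha+m1)^{(0)}(t)=\alpha(t)$, yields
\[
i_y^k(t)=\prod_{\alpha\in R^+}\prod_{m=0}^{\alpha(y)-1}\frac{\alpha(t)+k_{\alpha+m1}}{\alpha(t)-k_{\alpha+m1}},
\]
all denominators being nonzero because $t\in\mathcal{S}_k$.

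It remains to read off the value $k_{\alpha+m1}$. If $\theta(Y)=\mathbb{Z}$, then for each $W$-orbit of $R$ all affine roots $\alpha+m1$ with $\alpha$ in that orbit lie in a single $W^a$-orbit, so $k_{\alpha+m1}=k_\alpha$ and the product over $m$ becomes an $\alpha(y)$-th power, which is \eqref{ikred}. If $\theta(Y)=2\mathbb{Z}$ (so $R$ is of type $C_n$), the same holds for $\alpha\notin W\theta$, whereas for $\beta\in W\theta$ one checks that $\beta+m1\in W^a\theta$ when $m$ is even and $\beta+m1\in W^a a_0$ when $m$ is odd: since $\beta$ is long and $Y$ is $W$-stable, $(v\beta)(z)\in\theta(Y)=2\mathbb{Z}$ for all $v\in W$, $z\in Y$, so the parity of $m$ is invariant under the $W^a$-action on affine roots with gradient in $W\theta$, while $a_0=-\theta+1$ has odd parity. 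As $\beta(y)$ is then even, the product over $m=0,\dots,\beta(y)-1$ splits into $\beta(y)/2$ factors of value $k_\theta$ (even $m$) and $\beta(y)/2$ of value $k_0$ (odd $m$), giving \eqref{iknonred}.

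I expect the crux to be the bookkeeping in this last step: checking that for $\theta(Y)=2\mathbb{Z}$ the parity of $m$ really is a $W^a$-orbit invariant of $\beta+m1$ with $\beta\in W\theta$ (so that $k_\theta$ and $k_0$ genuinely separate), together with getting the inversion-set description of $t_y$ exactly right for dominant $y$ — both elementary, but easy to spoil by a sign or an off-by-one error.
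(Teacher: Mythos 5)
Your proof is correct and follows essentially the same route as the paper's: compute the inversion set $R^{a,+}\cap t_y^{-1}R^{a,-}$, substitute into \eqref{explicitc} at $c=0$, and sort the multiplicities $k_{\alpha+m1}$ by $W^a$-orbit, with the parity analysis for $\beta\in W\theta$ in the case $\theta(Y)=2\mathbb{Z}$. The only (harmless) deviation is that the paper computes the inversion set for arbitrary $y$ directly, yielding a two-sided product over $m$, rather than reducing to dominant $y$ and invoking multiplicativity of $y\mapsto i_y^k(t)$ as you do.
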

\begin{proof}
{\bf (i)}. This follows from the cocycle property of $I_w^k(t)$
(the last identity of \eqref{cocyclecondition}) since translations
$t_y$ ($y\in Y$) act trivially under the action $w\mapsto w^{(0)}=Dw$ of $W^a$
on $V_{\mathbb{C}}$.\\
{\bf (ii)} \& {\bf (iii)}. For $y\in Y$ we have
\[
R^{a,+}\cap t_y^{-1}R^{a,-}=\{\alpha+m1 \,\, | \,\, \alpha\in R,\,\,\,
B(\alpha)\leq m<B(\alpha)+\alpha(y)\}
\]
where $B(\alpha)=1$ if $\alpha\in R^-$ and $=0$ if $\alpha\in R^+$.
Using the convention that $\prod_{r=l}^mc_r=1$ if $l>m$ we can thus write
\begin{equation}\label{one}
i_y^k(t)=\prod_{\alpha\in R^+}\prod_{m=0}^{\alpha(y)-1}
\frac{\alpha(t)+k_{\alpha+m1}}{\alpha(t)-k_{\alpha+m1}}
\prod_{m=1}^{-\alpha(y)}\frac{\alpha(t)-k_{-\alpha+m1}}
{\alpha(t)+k_{-\alpha+m1}}.
\end{equation}
If $\theta(Y)=\mathbb{Z}$ then $k_a=k_{Da}$ for all $a\in R^a$ and
\eqref{one} reduces to \eqref{ikred}.

If $\theta(Y)=2\mathbb{Z}$ then $k_a=k_{Da}$ for $a\in R^a$ with
$Da\not\in W\theta$. Furthermore, for $\alpha\in W\theta$
we have $k_{\alpha+(2m)1}=k_\theta$ and
$k_{\alpha+(2m+1)1}=k_0$ for $m\in\mathbb{Z}$.
Formula \eqref{one} then becomes \eqref{iknonred} after
straightforward computations.
\end{proof}


\section{Representations of the trigonometric
Cherednik algebra at critical level}\label{basic}

We fix an $W^a$-invariant multiplicity function 
$k: R^a\rightarrow \mathbb{C}$ throughout this section.

\subsection{The algebra $A(k)$}\label{newform}

In the next subsection we give the representation of
the trigonometric Cherednik algebra $H^a(k)$ at critical level 
in terms of vector-valued Dunkl-type operators.
To avoid the use of twisted affine root systems it is convenient
to work with an adjusted presentation of $H^a(k)$, 
which we give now first.

\begin{defi}
Let $A(k)$ be the unital associative algebra over $\mathbb{C}$
satisfying:
\begin{enumerate}
\item[{\it (i)}] $A(k)$ contains $S(V_{\mathbb{C}})$
and $\mathbb{C}[W^a]$ as subalgebras.
\item[{\it (ii)}] The multiplication map defines an isomorphism
\[
S(V_{\mathbb{C}})\otimes_{\mathbb{C}}\mathbb{C}[W^a]\rightarrow A(k).
\]
\item[{\it (iii)}] The cross relations 
\[s_a\cdot v-s_{Da}(v)\cdot s_a=-k_aDa(v),
\qquad \forall\, a\in F^a
\]
for all $v\in V_{\mathbb{C}}$.
\item[{\it (iv)}] $\omega\cdot v=(D\omega)(v)\cdot\omega$
for $\omega\in\Omega$ and $v\in V_{\mathbb{C}}$.
\end{enumerate}
\end{defi}

The cross relations {\it (iii)} may be replaced by
\begin{equation}\label{DemazureLusztig}
s_a\cdot p- s_{Da}(p)\cdot s_a=k_a\Delta_{Da}(p)\qquad
\forall\, a\in F^a, \forall\, p\in S(V_{\mathbb{C}}),
\end{equation}
where the divided difference operator
$\Delta_\alpha: S(V_{\mathbb{C}})\rightarrow S(V_{\mathbb{C}})$ ($\alpha\in R$)
is given by
\[\Delta_\alpha(p)=\frac{s_\alpha(p)-p}{\alpha^\vee},\qquad p\in S(V_{\mathbb{C}}).
\]
Induction to the length of $w\in W^a$ also proves the commutation relations
\begin{equation}\label{forintertwiner}
w\cdot v=((Dw)v)\cdot w-\sum_{a\in R^{a,+}\cap w^{-1}R^{a,-}}k_a(Da)(v)ws_a
\qquad \forall\, w\in W^a, \forall\, v\in V
\end{equation}
in $A(k)$ (cf. \eqref{wxi}). The algebra $A(k)$ is the 
trigonometric Cherednik algebra at critical level, 
as follows from the following
lemma.
\begin{lem}
Let $k^\vee$ be the multiplicity function
$k^\vee_a=2k_a/\langle Da,Da\rangle$ ($a\in R^a$). 
The assignments $v\mapsto \langle v,\cdot\rangle\in V_{\mathbb{C}}^*$
and $w\mapsto w$ for $v\in V_{\mathbb{C}}$ and $w\in W^a$ uniquely
extend to a unital algebra isomorphism $A(k^\vee)\rightarrow H^a(k)$.
\end{lem}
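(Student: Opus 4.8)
The plan is to exhibit the map explicitly on generators and check that it respects the defining relations of $A(k^\vee)$, then invoke the PBW-type decomposition in condition \textit{(ii)} to conclude bijectivity. First I would note that the assignment $v\mapsto\langle v,\cdot\rangle$ is the Euclidean identification $V_{\mathbb{C}}\xrightarrow{\sim}V_{\mathbb{C}}^*$, which is $W$-equivariant (hence $W^a$-equivariant for the gradient action), so it extends to an algebra isomorphism $S(V_{\mathbb{C}})\xrightarrow{\sim}S(V_{\mathbb{C}}^*)$; together with the identity on $\mathbb{C}[W^a]$ this gives a candidate linear map $\Phi:A(k^\vee)\to H^a(k)=H^a(k,0)$ using the decompositions of type \textit{(ii)} on both sides. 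The content is that $\Phi$ is multiplicative.

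The main point is to match the cross relations. In $H^a(k)$, i.e. at level $c=0$, formula \eqref{BGGform} specializes (since $a^{(0)}=Da$ and $s_a^{(0)}=s_{Da}$) to $s_a\cdot p-s_{Da}(p)\cdot s_a=k_a\bigl(s_{Da}(p)-p\bigr)/Da$ for $a\in F^a$ and $p\in S(V_{\mathbb{C}}^*)$; applied to $p=\xi\in V_{\mathbb{C}}^*$ this is $s_a\cdot\xi-s_{Da}(\xi)\cdot s_a=-k_a\,\xi(Da^\vee)$. On the $A(k^\vee)$ side, relation \textit{(iii)} reads $s_a\cdot v-s_{Da}(v)\cdot s_a=-k^\vee_a\,Da(v)$ for $v\in V_{\mathbb{C}}$. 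Under $v\leftrightarrow\xi=\langle v,\cdot\rangle$ one has $Da(v)=\langle Da,v\rangle$ and $\xi(Da^\vee)=\langle v,Da^\vee\rangle=2\langle v,Da\rangle/\langle Da,Da\rangle=k^\vee_a Da(v)/k_a$, so the two right-hand sides agree precisely because $k^\vee_a=2k_a/\langle Da,Da\rangle$. Similarly relation \textit{(iv)}, $\omega\cdot v=(D\omega)(v)\cdot\omega$, transports under $\Phi$ to $\omega\cdot\xi=\omega^{(0)}(\xi)\cdot\omega$, which is Definition \ref{H}\,\textit{(iv)} at $c=0$. Since $A(k^\vee)$ is generated by $V_{\mathbb{C}}$ and $W^a$ subject to the relations of its defining presentation (condition \textit{(i)} together with \textit{(iii)}, \textit{(iv)}), and all of these are preserved, $\Phi$ is a well-defined algebra homomorphism.

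Bijectivity is then immediate from the parallel decompositions: $\Phi$ sends the $\mathbb{C}$-basis of $A(k^\vee)$ coming from $S(V_{\mathbb{C}})\otimes_{\mathbb{C}}\mathbb{C}[W^a]$ (a basis of monomials in a fixed basis of $V_{\mathbb{C}}$ tensored with the group elements $w\in W^a$) to the corresponding basis of $H^a(k)$ coming from $S(V_{\mathbb{C}}^*)\otimes_{\mathbb{C}}\mathbb{C}[W^a]$, because the underlying map $S(V_{\mathbb{C}})\to S(V_{\mathbb{C}}^*)$ is an isomorphism carrying a basis to a basis and $\Phi$ is the identity on the $W^a$ factor. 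Hence $\Phi$ is a linear isomorphism, and being an algebra homomorphism it is an algebra isomorphism; its inverse is given by $\xi\mapsto\xi^\vee\in V_{\mathbb{C}}$, $w\mapsto w$.

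The only mild subtlety — and the step I would be most careful about — is the bookkeeping of which presentation we use for $H^a(k)$. As stated, $A(k^\vee)$ is defined via the \emph{level-zero} cross relations with the simple affine reflections $s_a$ ($a\in F^a$) acting by their gradients $s_{Da}$, whereas Definition \ref{H} phrases the cross relations for general $c$ using $s_a^{(\mathbf c)}$; one must observe that at $c=0$ these coincide, so no twisting of the affine root system intervenes. Equivalently, one checks that the target of $\Phi$ really is the algebra defined by relations \textit{(iii)}, \textit{(iv)} of Definition \ref{H} with $\mathbf c=0$ and $\mathbf k_a=k_a$, which is exactly $H^a(k)$. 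Once that identification is made explicit, everything else is the routine verification sketched above.
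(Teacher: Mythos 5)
Your verification is correct and is precisely the ``straightforward check'' that the paper's proof leaves to the reader: the Euclidean identification $v\mapsto\langle v,\cdot\rangle$ is $W$-equivariant, the identity $\xi(Da^\vee)=2\,Da(v)/\langle Da,Da\rangle=k^\vee_a\,Da(v)/k_a$ matches the two sets of cross relations at level $c=0$ (where $s_a^{(0)}=s_{Da}$ on $V^*$), and the PBW decompositions give bijectivity. No issues.
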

\begin{proof}
Straightforward check.
\end{proof}
It is convenient to alter the notations for 
the cocycles $I_w^k(t)\in \mathbb{C}[W^a]$ ($w\in W^a$) accordingly.
It results in the following definitions and formulas.

Let $C_k\subset V_{\mathbb{C}}^*$ be the $W$-invariant
set of vectors $\lambda\in V_{\mathbb{C}}^*$
satisfying $\lambda(Da^\vee)\not=k_a$ for all $a\in R^a$.
For $\lambda\in C_k$ there exists unique $J_w^k(\lambda)\in\mathbb{C}[W^a]$
($w\in W^a$) satisfying 
\begin{equation*}
\begin{split}
J_{s_a}^k(\lambda)&=\frac{\lambda(Da^\vee)s_a+k_a}{\lambda(Da^\vee)-k_a},
\qquad\qquad a\in F^a,\\
J_\omega^k(\lambda)&=\omega,\qquad\qquad\qquad\qquad\qquad \omega\in\Omega,\\
J_{\sigma\tau}^k(\lambda)&=J_\sigma^k((D\tau)\lambda)J_\tau^k(\lambda),
\qquad\quad\,\, \sigma,\tau\in W^a.
\end{split}
\end{equation*}
Denoting $j_w^k(\lambda)=\chi(J_w^k(\lambda))$ ($w\in W^a$)
and $j_y^k(\lambda)=j_{t_y}^k(\lambda)$ ($y\in Y$), 
we have 
\begin{equation}\label{jfiniteexplicit}
j_w^k(\lambda)=\prod_{\alpha\in R^+\cap w^{-1}R^-}
\frac{\lambda(\alpha^\vee)+k_\alpha}{\lambda(\alpha^\vee)-k_\alpha},\qquad
w\in W
\end{equation}
and 
\begin{equation}\label{jexplicit}
j_{y}^k(\lambda)=
\prod_{\stackrel{\alpha\in R^+:}{\alpha\not\in W\theta}}
\left(\frac{\lambda(\alpha^\vee)+k_\alpha}
{\lambda(\alpha^\vee)-k_\alpha}\right)^{\alpha(y)}
\prod_{\stackrel{\beta\in R^+:}{\beta\in W\theta}}
\left(\frac{\lambda(\beta^\vee)+k_\theta}
{\lambda(\beta^\vee)-k_\theta}\right)^{\frac{\beta(y)}{2}}
\left(\frac{\lambda(\beta^\vee)+k_0}
{\lambda(\beta^\vee)-k_0}\right)^{\frac{\beta(y)}{2}}
\end{equation}
for $y\in Y$. If $\theta(Y)=\mathbb{Z}$ then the latter formula simplifies to
\[j_{y}^k(\lambda)=\prod_{\alpha\in R^+}\left(\frac{\lambda(\alpha^\vee)+k_\alpha}
{\lambda(\alpha^\vee)-k_\alpha}\right)^{\alpha(y)},\qquad y\in Y.
\]

\subsection{The Dunkl type operators}

For a complex associative algebra $A$ we write $\textup{Mod}_A$
for the category of complex left $A$-modules. The 
category $\textup{Mod}_{\mathbb{C}[W^a]}$ is a tensor category
with unit object the trivial $W^a$-module, which we denote by $\mathbb{I}$.
In this subsection we define an explicit functor 
$F_{dr}^k:\textup{Mod}_{\mathbb{C}[W^a]}\rightarrow
\textup{Mod}_{A(k)}$ using vector valued Dunkl type operators
(the subindex ``dr'' stands for ``differential-reflection'').
It extends results from the paper \cite{EOS}, in which the $A(k)$-module
$F_{dr}^k(\mathbb{I})$ was constructed. 
The Dunkl type differential-reflection operators will 
have the special feature that at the chamber $w^{-1}C_+$ ($w\in W^a$)
the number of occuring reflection terms is equal to the 
``distance'' $l(w)$ of $w^{-1}C_+$ to the fundamental chamber $C_+$.
Special cases and other examples of such differential-reflection operators 
were considered in \cite{P,MW,HK,H,EOS}.

Recall that $V_a=a^{-1}(0)$ is the affine root hyperplane of the
affine root $a\in R^a$. Denote 
$V_{reg}=V\setminus \cup_{a\in R^{a,+}}V_a$ for the
set of regular elements in $V$. It is well known that
\[V_{reg}=\bigcup_{w\in W\ltimes Q^\vee}w(C_+)
\]
(disjoint union), with $C_+\subset V_{reg}$ given by
\[C_+=\{v\in V \, | \, a(v)>0 \quad \forall\, a\in F^a\}.
\]
Furthermore, the subgroup $\Omega$ of length zero elements in 
$W^a$ permutes $F^a$, hence it acts on the fundamental domain $C_+$.
In particular, $W^a$ permutes the connected components
$\mathcal{C}=\{w(C_+) \, | \, w\in W\ltimes Q^\vee\}$ of $V_{reg}$. 
We call $C\in \mathcal{C}$ a chamber, and $C_+$ the fundamental chamber.

Denote $C^\omega(V)$ be the space of complex-valued, real analytic
functions on $V$. For a complex left $W^a$-module $M$ we now define
a suitable space of $M$-valued functions on $V$ which are 
real analytic on $V_{reg}$, but which are ``fuzzy'' 
on the affine root hyperplanes, in the sense that we do not specify
its values on the affine root hyperplanes (cf. Remark \ref{multi}{\bf (ii)}).

\begin{defi}
Let $M$ be a complex left $W^a$-module.
We write $B^\omega(V;M)$ for the complex vector space of functions
$f: V_{reg}\rightarrow M$ satisfying, for all $C\in\mathcal{C}$, 
$f|_C=f_C|_C$ for some $f_C\in C^\omega(V)\otimes_{\mathbb{C}}M$ (algebraic 
tensor product).
\end{defi}
\begin{rema}\label{multi}
{\bf (i)} The map $f\mapsto (f_C)_{C\in\mathcal{C}}$
defines a complex linear isomorphism $B^\omega(V;M)\rightarrow
\prod_{C\in\mathcal{C}}(C^\omega(V)\otimes M)$. We will use
this identification without further reference.\\
{\bf (ii)} A function $f\in B^\omega(V;M)$ can be interpreted as 
a multi $M$-valued function on $V$ by defining
\[f(v)=\{f_C(v)\}_{C\in\mathcal{C}:\, v\in\overline{C}}
\]
for any $v\in V$, 
where $\overline{C}$ is the closure of the chamber 
$C$ in the Euclidean space $V$.
\end{rema}
The space $B^\omega(V;M)$ is a $W^a$-module by
\[\bigl(w\cdot f\bigr)(v)=w\bigl(f(w^{-1}(v))\bigr),
\qquad w\in W^a,\,\, f\in B^\omega(V;M),\,\, v\in V_{reg}.
\]
We denote the action by a dot to avoid confusion with the 
$W^a$-action on $M$. Viewing the $f_C$'s as $M$-valued
functions on $V$, the action can be expressed as $(w\cdot f)_C(v)=
w\bigl(f_{w^{-1}C}(w^{-1}v)\bigr)$ for $w\in W^a$, $C\in\mathcal{C}$
and $v\in V$. Alternatively it can be expressed
as $(w\cdot f)_C=(w\otimes w)f_{w^{-1}C}$, viewed
as identity in $C^\omega(V)\otimes_{\mathbb{C}}M$. Here we use 
the natural $W^a$-action
on $C^\omega(V)$, given by $(wg)(v):=g(w^{-1}v)$ for $g\in C^\omega(V)$, 
$w\in W^a$ and $v\in V$.

Let $\mathcal{I}: \mathbb{R}^\times\rightarrow \{0,1\}$ be the 
indicator function 
of $\mathbb{R}_{<0}$ and write $\mathcal{I}_a(v):=\mathcal{I}(a(v))$
for $a\in R^{a,+}$ and $v\in V_{reg}$. Then
\begin{equation*}
\mathcal{I}_a|_{w^{-1}C_+}\equiv
\begin{cases}
1 \quad &\hbox{ if } wa\in R^{a,-},\\
0 \quad &\hbox{ if } wa\in R^{a,+}.
\end{cases}
\end{equation*}
In particular, for $w\in W^a$ we have
$\mathcal{I}_a|_{w^{-1}C_+}\equiv 1$ only if 
$a$ is a positive affine root from the finite 
set $R^{a,+}\cap w^{-1}R^{a,-}$. 
This ensures that the Dunkl type operator
\begin{equation}\label{Dunkl}
\mathcal{D}_v^{k,M}f=\partial_v f-\sum_{a\in R^{a,+}}k_a(Da)(v)
\mathcal{I}_a(\cdot)(s_a\cdot f),\qquad f\in B^\omega(V;M)
\end{equation}
for $v\in V$ defines a well defined linear operator on 
$B^\omega(V;M)$, where 
\[(\partial_vf)(v^\prime)=\left. \frac{d}{dt}\right|_{t=0}f(v^\prime+tv)
\] 
for $v^\prime\in V_{reg}$ is the directional derivative of $f$ 
in the direction $v\in V$. Indeed, on a fixed 
chamber $w^{-1}C_+\in\mathcal{C}$ ($w\in W^a$)
the formula \eqref{Dunkl} for the Dunkl operator
gives
\begin{equation}\label{Dunklperchamber}
(\mathcal{D}_v^{k,M}f)_{w^{-1}C_+}=\partial_vf_{w^{-1}C_+}-
\sum_{a\in R^{a,+}\cap w^{-1}R^{a,-}}
k_a(Da)(v)(s_a\otimes s_a)f_{s_aw^{-1}C_+}
\end{equation}
as identity in $C^\omega(V)\otimes_{\mathbb{C}}M$. 

\begin{thm}
Let $M$ be a $W^a$-module and $k$ an $W^a$-invariant
multiplicity function on $R^a$. The assignments
\begin{equation*}
\begin{split}
v&\mapsto \mathcal{D}_v^{k,M},\qquad v\in V,\\
w&\mapsto w\cdot,\qquad\,\,\,\,\,\,\, w\in W^a
\end{split}
\end{equation*}
uniquely extend to an algebra
homomorphism $\pi_{k,M}: A(k)\rightarrow 
\textup{End}_{\mathbb{C}}\bigl(B^\omega(V;M)\bigr)$.
\end{thm}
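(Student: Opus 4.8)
The plan is to apply Lemma~\ref{Dunklapproach} with $L$ specialized to $\mathbb{C}$, $\mathbf{c}$ set to $0$, $\mathbf{k}_a$ set to $k_a$, so that $H^a_L$ becomes (after the identification of the preceding Lemma) the algebra $A(k)$. We take the $\mathbb{C}[W^a]$-module to be $M':=B^\omega(V;M)$ with the dotted $W^a$-action, the operators $T_v:=\mathcal{D}_v^{k,M}$ for $v\in V$ (in the role of $T_\xi$, using the $A(k)$-presentation with $V$ rather than $V^*$), and as the generating submodule $N$ we take the subspace of those $f\in B^\omega(V;M)$ whose components $f_C$ are all \emph{constant} $M$-valued functions, i.e.\ $f_C\in M$ for all $C$; this $N$ is an $L$-submodule that clearly generates $B^\omega(V;M)$ under the $W^a$-action together with multiplication by real analytic functions — more precisely, since $C^\omega(V)\otimes M$ is spanned over $\mathbb{C}[W^a]$-translates by constants times analytic scalar functions, and analytic scalar functions are absorbed by the $\partial_v$'s appearing in $A(k)$, one checks $N$ generates. (If that generation statement is awkward, one instead invokes Lemma~\ref{Dunklapproachdual} with $p$ the ``evaluation of the germ at a point of $C_+$'' map, whose kernel contains no nonzero $W^a$-submodule.)

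First I would verify hypothesis~{\bf (i)} of Lemma~\ref{Dunklapproach}: that $v\mapsto \mathcal{D}_v^{k,M}$ is a well-defined linear operator on $B^\omega(V;M)$ depending linearly on $v\in V$. Well-definedness is exactly the point made in the text before the theorem: on each chamber $w^{-1}C_+$ only the finitely many affine roots in $R^{a,+}\cap w^{-1}R^{a,-}$ contribute (formula~\eqref{Dunklperchamber}), so $(\mathcal{D}_v^{k,M}f)_C$ is again a well-defined element of $C^\omega(V)\otimes_{\mathbb{C}}M$ for each $C$; linearity in $v$ is immediate from $\partial_v$ being linear in $v$ and $(Da)(v)$ being linear in $v$.

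Next I would check hypothesis~{\bf (ii)}, the cross relations $s_a\mathcal{D}_v^{k,M}-\mathcal{D}_{s_{Da}(v)}^{k,M}s_a=-k_a(Da)(v)\,\mathrm{Id}$ for $a\in F^a$ and $\omega\mathcal{D}_v^{k,M}=\mathcal{D}_{(D\omega)(v)}^{k,M}\omega$ for $\omega\in\Omega$. This is the computational heart. For a simple affine root $a$ one writes out both sides on a fixed chamber $C=w^{-1}C_+$ using~\eqref{Dunklperchamber} and the formula $(s_a\cdot f)_C=(s_a\otimes s_a)f_{s_aC}$. The key combinatorial input is that conjugation by $s_a$ turns the directional derivative $\partial_v$ into $\partial_{s_{Da}(v)}$, and permutes the set $R^{a,+}\cap w^{-1}R^{a,-}$ of ``active'' roots on $C$ into the corresponding set on $s_aC$ \emph{except} for the single root $a$ itself (which switches sides precisely when $l(s_aw)$ differs from $l(w)$ by one), whose contribution produces the scalar term $-k_a(Da)(v)$; the $s_a^2=1$ identity and $W^a$-invariance of $k$ are used to match the remaining reflection terms. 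The $\Omega$-relation is easier since $\omega$ has length zero and merely relabels $F^a$ and permutes chambers without introducing scalar terms, using $\omega$-invariance of $k$. I expect this to be the main obstacle: keeping track of exactly which reflection terms survive on which chamber, and confirming the single surviving scalar has the right sign and coefficient to match cross relation~{\it (iii)} of the definition of $A(k)$ (equivalently~\eqref{DemazureLusztig}); one should double-check the sign convention in~\eqref{Dunkl} against the sign in~\eqref{wxi}/\eqref{forintertwiner}.

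Finally I would verify hypothesis~{\bf (iii)}: that the commutator $[\mathcal{D}_v^{k,M},\mathcal{D}_{v'}^{k,M}]$ annihilates $N$. On a constant function $f$ with $f_C\in M$ for all $C$, the derivative terms $\partial_v$ vanish, so $\mathcal{D}_v^{k,M}f$ has components $(\mathcal{D}_v^{k,M}f)_{w^{-1}C_+}=-\sum_{a\in R^{a,+}\cap w^{-1}R^{a,-}}k_a(Da)(v)(s_a\otimes s_a)f_{s_aw^{-1}C_+}$, which are again constant on each chamber, so $N$ is $\mathcal{D}$-stable and we may compute $[\mathcal{D}_v,\mathcal{D}_{v'}]$ on $N$ purely combinatorially. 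The vanishing of this commutator on $N$ then follows from~\eqref{towardscommutative} in the proof of Lemma~\ref{Dunklapproach} together with the fact that for the trivial module $\mathbb{I}$ (or directly for constants valued in $M$) the operators reduce — up to the $W^a$-action on indices — to the already-known commuting Dunkl operators of \cite{EOS}; alternatively one notes that $[\mathcal{D}_v,\mathcal{D}_{v'}]$ raises no derivative and a short direct calculation using the cocycle-type identity for the ``active root sets'' shows it is zero on $N$. Invoking Lemma~\ref{Dunklapproach} (or its dual) then yields that all $\mathcal{D}_v^{k,M}$ commute and that the assignments extend to the desired algebra homomorphism $\pi_{k,M}$; uniqueness is clear since $V$ and $W^a$ generate $A(k)$.
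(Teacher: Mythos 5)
Your primary route through Lemma~\ref{Dunklapproach} has a genuine gap at the generation hypothesis. The lemma requires $N$ to generate $B^\omega(V;M)$ as an $L[W^a]$-module, i.e.\ using only the group action and scalars; but the dot-action of $W^a$ merely permutes the chambers and acts through $(w\otimes w)$ on $C^\omega(V)\otimes M$, so the $\mathbb{C}[W^a]$-span of your $N$ (constant-component functions) is again contained in the constant-component functions and is nowhere near all of $B^\omega(V;M)$. Your appeal to ``multiplication by real analytic functions'' and to the $\partial_v$'s is not available: those operators are not part of the $\mathbb{C}[W^a]$-module structure that the lemma uses to propagate the vanishing of the commutators. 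Moreover, even granting the choice of $N$, your verification of hypothesis~{\bf (iii)} is an assertion rather than a proof: on constants the commutator reduces to $\sum_{a,b}k_ak_b(Da)(v)(Db)(v')\lbrack \mathcal{I}_a(s_a\cdot),\mathcal{I}_b(s_b\cdot)\rbrack$, whose vanishing is a nontrivial pairs-of-roots identity of essentially the same difficulty as the commutativity you are trying to establish; equation~\eqref{towardscommutative} only transports vanishing between chambers and cannot create it, and the reduction to \cite{EOS} does not handle the $s_a$-action on $M$.

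Your parenthetical fallback via Lemma~\ref{Dunklapproachdual} is the correct route (and is the paper's), but you stop short of its one essential step. Take $N=C^\omega(V)\otimes_{\mathbb{C}}M$ and $p(f)=f_{C_+}$ (equivalently your germ at a point of $C_+$, since $f_{C_+}$ is real analytic). Because $R^{a,+}\cap 1^{-1}R^{a,-}=\emptyset$, formula~\eqref{Dunklperchamber} for $w=1$ reads $(\mathcal{D}_v^{k,M}f)_{C_+}=\partial_vf_{C_+}$, i.e.\ $p\circ\mathcal{D}_v^{k,M}=\partial_v\circ p$; hence $p\circ\lbrack\mathcal{D}_v^{k,M},\mathcal{D}_{v'}^{k,M}\rbrack=\lbrack\partial_v,\partial_{v'}\rbrack\circ p=0$, which is exactly the hypothesis of the dual lemma, with no combinatorics at all. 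The kernel condition is immediate from $(w\cdot f)_{C_+}=(w\otimes w)f_{w^{-1}C_+}$: a $W^a$-submodule inside $\ker(p)$ has all chamber components zero. You should state this explicitly; it is the observation that makes the whole proof a one-liner and the reason the dual version of the lemma, rather than Lemma~\ref{Dunklapproach}, is the right tool here. Your discussion of the cross relations (hypothesis~{\bf (ii)}) is at the right level of detail and matches the paper's ``direct computation''.
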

\begin{proof}
We apply Lemma \ref{Dunklapproachdual} (with adjusted notations and 
specialized parameters)
to $B^\omega(V;M)$, considered as $W^a$-module by the dot-action. 

A direct computation (compare with \cite[Thm. 4.1]{EOS}) shows that
the $\mathcal{D}_v^{k,M}$ ($v\in V$) satisfy the $A(k)$ type cross relations
with respect to the dot-action of $W^a$ on $B^\omega(V;M)$. 
It thus remains to construct an appropriate complex vector space $N$ and a
linear map $p: B^\omega(V;M)\rightarrow N$ satisfying the conditions of
Lemma \ref{Dunklapproachdual}. We take $N=C^\omega(V)\otimes_{\mathbb{C}}M$
and $p$ the linear map $p\bigl((f_C)_{C\in\mathcal{C}}\bigr)=f_{C_+}$.
Clearly the only $W^a$-submodule of $B^\omega(V;M)$ that is contained in 
$\textup{ker}(p)$ is $\{0\}$. Furthermore,
\[
p\bigl(\mathcal{D}_v^{k,M}f\bigr)=\partial_vp(f),\qquad \forall\, 
f\in B^\omega(V;M)
\]
by \eqref{Dunklperchamber}, hence the image of $\lbrack \mathcal{D}_v^{k,M}, 
\mathcal{D}_{v^\prime}^{k,M}\rbrack$ is contained in $\textup{ker}(p)$.
Thus Lemma \ref{Dunklapproachdual} can be applied. 
It yields the desired result.
\end{proof}
\begin{rema}
{\bf (i)} The theorem reduces to \cite[Thm. 4.2]{EOS} when 
$Y=Q^\vee$, $k_0=k_\theta$ and $M=\mathbb{I}$.\\
{\bf (ii)} The representation $\pi_{k,\mathbb{I}}$ 
is faithful, compare with (the proof of) \cite[Thm. 4.2]{EOS}.
\end{rema}
\begin{cor}\label{FunctorG}
{\bf (i)}  The assignment
$M\mapsto B(V;M)$ defines a covariant functor 
$F_{dr}^k: \textup{Mod}_{\mathbb{C}[W^a]}\rightarrow \textup{Mod}_{A(k)}$
(with the obvious definition on morphisms).\\
{\bf (ii)} For $p\in S(V_{\mathbb{C}})^W$ we have $\pi_{k,M}(p)=
p(\partial)\otimes\textup{Id}_M$,
where $p(\partial)$ is the constant-coefficient differential operator 
associated to $p$. 
\end{cor}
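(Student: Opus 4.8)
The plan is to verify the two assertions of Corollary \ref{FunctorG} directly from the previous theorem. For part {\bf (i)}, the point is that a morphism $\phi: M\to M'$ of $\mathbb{C}[W^a]$-modules induces a map $F_{dr}^k(\phi): B^\omega(V;M)\to B^\omega(V;M')$ by post-composition, $(f_C)_{C\in\mathcal{C}}\mapsto ((\textup{id}\otimes\phi)f_C)_{C\in\mathcal{C}}$, and one checks that this intertwines the $A(k)$-actions. Since $\phi$ is $W^a$-equivariant, it commutes with the dot-action of $W^a$ on the function spaces, and since $\phi$ acts only on the target vector space of the functions (not on $V$), it commutes with the directional derivatives $\partial_v$ and with the reflection terms $\mathcal{I}_a(\cdot)(s_a\cdot\,)$ appearing in \eqref{Dunkl}; hence it commutes with each $\mathcal{D}_v^{k,M}$. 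Functoriality (identity to identity, compatibility with composition) is then immediate from the definition. I would phrase this in one short paragraph; it is entirely routine once the theorem is in hand.

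For part {\bf (ii)}, the strategy is to use \eqref{Dunklperchamber}: on the chamber $w^{-1}C_+$, the reflection sum in $\mathcal{D}_v^{k,M}$ runs over $R^{a,+}\cap w^{-1}R^{a,-}$, which is empty precisely when $w^{-1}C_+=C_+$ (more precisely, when $l(w)=0$, but then $w^{-1}C_+$ is a $\Omega$-translate of $C_+$). On the fundamental chamber one therefore has $(\mathcal{D}_v^{k,M}f)_{C_+}=\partial_v f_{C_+}$, so $p\circ\mathcal{D}_v^{k,M}=\partial_v\circ p$ where $p$ is the projection $(f_C)\mapsto f_{C_+}$; this was already recorded in the proof of the theorem. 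Iterating, for any $p\in S(V_{\mathbb{C}})$ one gets $p\circ\pi_{k,M}(p')=p'(\partial)\otimes\textup{Id}_M$ applied after $p$ when $p'\in S(V_\mathbb{C})^W$ — wait, that is not quite the claim: the claim is the stronger statement that $\pi_{k,M}(p')=p'(\partial)\otimes\textup{Id}_M$ as an operator on all of $B^\omega(V;M)$, not merely after projecting to the fundamental chamber.

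To get the stronger statement, I would argue as follows. By \eqref{BGGform}/\eqref{DemazureLusztig}, $p'\in S(V_{\mathbb{C}})^W$ is central in $A(k)$ (it commutes with all $s_a$, $a\in F^a$, hence with all of $W^a$, and with $S(V_\mathbb{C})$ trivially; the excerpt notes $S(V_{\mathbb{C}}^*)^W$ lies in the center of $H^a(k)$, which transfers to $S(V_\mathbb{C})^W\subset Z(A(k))$ via the lemma identifying $A(k^\vee)\cong H^a(k)$). Therefore $\pi_{k,M}(p')$ commutes with the dot-action of $W^a$ on $B^\omega(V;M)$. Now fix $f\in B^\omega(V;M)$ and a chamber $C=w^{-1}C_+$. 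Then $w\cdot f$ has fundamental-chamber component $(w\cdot f)_{C_+}=(w\otimes w)f_{w^{-1}C_+}$, and applying $p\circ\pi_{k,M}(p')$ and using centrality plus the relation $p\circ\pi_{k,M}(p')=p'(\partial)\circ p$ gives
\[
\bigl(\pi_{k,M}(p')f\bigr)_{w^{-1}C_+}
=(w^{-1}\otimes w^{-1})\,p'(\partial)\bigl((w\otimes w)f_{w^{-1}C_+}\bigr)
=\bigl(p'(\partial)\otimes\textup{Id}_M\bigr)f_{w^{-1}C_+},
\]
the last equality because $p'$ is $W$-invariant, so $p'(\partial)$ commutes with the $W^a$-action on $C^\omega(V)$ (translations commute with constant-coefficient operators, and $w\in W$ sends $p'(\partial)$ to $(wp')(\partial)=p'(\partial)$). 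Since this holds on every chamber, $\pi_{k,M}(p')=p'(\partial)\otimes\textup{Id}_M$ on all of $B^\omega(V;M)$. The only mildly delicate point — and the step I expect to need the most care — is correctly tracking how the $W^a$-action on $C^\omega(V)$ interacts with $p'(\partial)$: one must use both that translations commute with any constant-coefficient operator and that the finite part $W$ fixes $p'$; combining these gives the needed commutation of $p'(\partial)$ with the full dot-action, which is what lets the fundamental-chamber computation propagate to all chambers.
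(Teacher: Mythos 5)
Your proposal is correct, and it is essentially the argument the paper has in mind: the paper disposes of \textbf{(i)} as ``clear'' and refers \textbf{(ii)} to the analogous statement in \cite{EOS}, whose proof is exactly your combination of the fundamental-chamber identity $p\circ\pi_{k,M}(p')=p'(\partial)\circ p$ with the centrality of $S(V_{\mathbb{C}})^W$ in $A(k)$ (hence commutation of $\pi_{k,M}(p')$ with the dot-action) and the $W^a$-equivariance of $p'(\partial)\otimes\textup{Id}_M$ for $W$-invariant $p'$. Your tracking of the conjugation $(w^{-1}\otimes w^{-1})\,p'(\partial)\,(w\otimes w)=p'(\partial)\otimes\textup{Id}_M$, using that translations commute with constant-coefficient operators and that $Dw\in W$ fixes $p'$, is precisely the point that makes the propagation from $C_+$ to all chambers work.
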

\begin{proof}
{\bf (i)} Clear.\\
{\bf (ii)} 
This is analogous to the proof of
\cite[Cor. 4.6]{EOS}. 
\end{proof}


\subsection{Integral-reflection operators}

In this subsection we define another explicit functor 
$F_{ir}^k:\textup{Mod}_{\mathbb{C}[W^a]}\rightarrow
\textup{Mod}_{A(k)}$ using integral-reflection operators \cite{GS,G}
(the subindex ``ir'' stands for ``integral-reflection'').
It again extends results from the paper \cite{EOS}, in which the $A(k)$-module
$F_{ir}^k(\mathbb{I})$ was constructed.

Let $M$ be a complex left $W^a$-module. The linear dual
$M^*=\hbox{Hom}_{\mathbb{C}}(M;\mathbb{C})$ is a left $W^a$-module
by $(w\psi)(m)=\psi(w^{-1}m)$ for $w\in W^a$, $\psi\in M^*$ and $m\in M$.
Consider the $A(k)$-module $\hbox{Ind}_{\mathbb{C}[W^a]}^{A(k)}
\bigl(M^*\bigr)$. As complex vector
spaces, we have 
\[\hbox{Ind}_{\mathbb{C}[W^a]}^{A(k)}\bigl(M^*\bigr)\simeq
S(V_{\mathbb{C}})\otimes_{\mathbb{C}}M^*.
\]
Expressing the action through the linear
isomorphism we get the following explicit $A(k)$-action
on $S(V_{\mathbb{C}})\otimes_{\mathbb{C}}M^*$, 
\begin{equation}\label{almost}
\begin{split}
s_a(p\otimes \psi)&=s_{Da}(p)\otimes s_a \psi+
k_a\Delta_{Da}(p)\otimes \psi,
\qquad a\in F^a,\\
\omega(p\otimes \psi)&=(D\omega)(p)\otimes\omega \psi,
\qquad\qquad\qquad\qquad\quad \omega\in\Omega,\\
r(p\otimes \psi)&=(rp)\otimes \psi,\qquad\quad\qquad\qquad\qquad\qquad\,\,
r\in S(V_{\mathbb{C}})
\end{split}
\end{equation}
for $p\in S(V_{\mathbb{C}})$ and $\psi\in M^*$.
We now endow the linear dual $\bigl(S(V_{\mathbb{C}})\otimes M^*)^*$
with the structure of left $A(k)$-module using the following simple lemma. 
\begin{lem}
For a complex left $A(k)$-module $N$, the linear dual $N^*$
is a left $A(k)$-module by
\[(X\psi)(n)=\psi(X^\dagger n),\qquad \psi\in N^*,
X\in A(k), n\in N,
\]
where $X\mapsto X^\dagger$ is the unique unital complex linear 
anti-algebra involution of $A(k)$ satisfying $w^\dagger=w^{-1}$ ($w\in W^a$) 
and $v^\dagger=v$ ($v\in V$).
\end{lem}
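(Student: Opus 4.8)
The plan is to verify that the prescribed formula $(X\psi)(n)=\psi(X^\dagger n)$ really does define a left $A(k)$-module structure, and that the only genuine content is the existence of the anti-involution $\dagger$. First I would establish the existence and uniqueness of $X\mapsto X^\dagger$. Uniqueness is immediate: since $W^a$ and $V$ generate $A(k)$ as an algebra and $\dagger$ is required to be anti-multiplicative and $\mathbb{C}$-linear, its values on $W^a$ and $V$ determine it everywhere. For existence I would use the triangular decomposition (ii) in the definition of $A(k)$, writing a general element as $\sum_w p_w w$ with $p_w\in S(V_{\mathbb{C}})$, and \emph{define} $\dagger$ by $\bigl(\sum_w p_w w\bigr)^\dagger=\sum_w w^{-1}p_w$ (re-expanded back into the $S(V_{\mathbb{C}})\otimes\mathbb{C}[W^a]$ normal form using \eqref{forintertwiner}); then one must check this linear map is anti-multiplicative. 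By bilinearity it suffices to check $(XY)^\dagger=Y^\dagger X^\dagger$ on algebra generators, i.e. for $X,Y$ each one of $v\in V$, $s_a$ ($a\in F^a$), $\omega\in\Omega$. The only nontrivial cases are $(s_a v)^\dagger=v^\dagger s_a^\dagger$ and $(\omega v)^\dagger=v^\dagger\omega^\dagger$, which reduce directly to the cross relations (iii) and (iv): applying $\dagger$ (a $\mathbb{C}$-linear reversal) to $s_a\cdot v=s_{Da}(v)\cdot s_a-k_aDa(v)$ gives $v\cdot s_a=s_a\cdot s_{Da}(v)-k_aDa(v)$, which is exactly the cross relation rearranged, and likewise for $\omega$. (Here one uses that $s_{Da}(v)\in V$ and $Da(v)\in\mathbb{C}$, so $\dagger$ acts as the identity on those pieces.) A clean way to organize this is to note that the defining relations of $A(k)$ are stable under the substitution $s_a\mapsto s_a$, $\omega\mapsto\omega^{-1}$, $v\mapsto v$ together with reversal of products, so $A(k)^{op}$ satisfies the same presentation and the universal property furnishes the anti-homomorphism; it is involutive because $\dagger^2$ fixes the generators.

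Granting $\dagger$, the module axioms for $N^*$ are then formal. For $X,Y\in A(k)$ and $\psi\in N^*$, $n\in N$ we have $((XY)\psi)(n)=\psi((XY)^\dagger n)=\psi(Y^\dagger X^\dagger n)=(Y^\dagger X^\dagger)\psi$ evaluated \ldots wait — more precisely $((XY)\psi)(n)=\psi(Y^\dagger(X^\dagger n))=(X\psi)(Y^\dagger n)=(Y(X\psi))(n)$, so one must be careful about the order; the correct statement is that $\psi\mapsto X\psi$ is a \emph{left} action precisely because $\dagger$ is an \emph{anti}-homomorphism, so $(XY)\psi=X(Y\psi)$: indeed $((XY)\psi)(n)=\psi((XY)^\dagger n)=\psi(Y^\dagger X^\dagger n)$ while $(X(Y\psi))(n)=(Y\psi)(X^\dagger n)=\psi(Y^\dagger X^\dagger n)$, and these agree. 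Linearity in $X$ and in $\psi$, and the unit axiom $1\psi=\psi$ (since $1^\dagger=1$), are trivial.

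The main (and really the only) obstacle is the verification that the candidate linear map $\dagger$ is anti-multiplicative, i.e. compatible with passing an element of $V$ past a generator of $W^a$; everything else is bookkeeping. I expect this to be a short computation once one writes it via the cross relations \eqref{DemazureLusztig} and \eqref{forintertwiner}, and it is entirely parallel to the standard construction of the anti-involution on degenerate (double) affine Hecke algebras. I would therefore present the existence of $\dagger$ as the substantive lemma — appealing to the universal property/presentation of $A(k)$ from its definition — and dispatch the module axioms for $N^*$ in one line.
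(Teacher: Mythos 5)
Your proposal is correct and is exactly the argument the paper leaves implicit (the lemma is stated without proof): construct $\dagger$ as the anti-automorphism determined on generators, check compatibility with the cross relations, and note that anti-multiplicativity of $\dagger$ is precisely what makes $\psi\mapsto X\psi$ a \emph{left} action. The only step worth writing out explicitly is the sign identity $Da\bigl(s_{Da}(v)\bigr)=-Da(v)$, which is what turns the cross relation (iii) evaluated at $s_{Da}(v)$ into the $\dagger$-image $v\cdot s_a=s_a\cdot s_{Da}(v)-k_aDa(v)$ of the relation at $v$; the case of $\omega\in\Omega$ is immediate since $D$ is a homomorphism.
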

Next we rewrite the $A(k)$-action on a suitable subspace
of $\bigl(S(V^*)\otimes_{\mathbb{C}}M^*\bigr)^*$ in terms of
integral-reflection operators.

For $a\in R^a$ we define the integral operator $I(a)$ on $C^\omega(V)$ by
\begin{equation}
(I(a)f)(v)=\int_0^{a(v)}f(v-tDa^\vee)dt,\qquad
f\in C^\omega(V),\,\, v\in V.
\end{equation}
With respect to the natural action $(wf)(v)=f(w^{-1}(v))$
of $w\in W^a$ on $f\in C^\omega(V)$ the integral operators 
satisfy
\begin{equation}\label{invariance}
wI(a)w^{-1}=I(w(a)),\qquad \forall\, w\in W^a, \forall\, a\in R^a.
\end{equation} 
The integral operators $I(\alpha)$ are adjoint to
the divided difference operator $-\Delta_\alpha$ ($\alpha\in R$)
in the following sense.

\begin{lem}\cite{G2}\label{G}
Let $\bigl(\cdot,\cdot\bigr): S(V_{\mathbb{C}})\times C^\omega(V)\rightarrow
\mathbb{C}$ be the non-degenerate complex bilinear form defined by
\[\bigl(p,f\bigr)=\bigl(p(\partial)f\bigr)(0),\qquad\quad
p\in S(V_{\mathbb{C}}), f\in C^\omega(V).
\]
Then
\[\bigl(\Delta_\alpha(p),f\bigr)=
-\bigl(p,I(\alpha)f\bigr)
\]
for $\alpha\in R$, $p\in S(V_{\mathbb{C}})$ and $f\in C^\omega(V)$. 
\end{lem}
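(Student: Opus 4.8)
I would prove the adjunction identity $\bigl(\Delta_\alpha(p),f\bigr)=-\bigl(p,I(\alpha)f\bigr)$ by reducing to the rank one case and then to the single pair of ``dual'' bases given by monomials. First I would observe that both sides are bilinear in $(p,f)$ and that, after choosing an orthonormal coordinate system in which $\alpha^\vee$ is proportional to the first coordinate vector, the operators $\Delta_\alpha$ and $I(\alpha)$ only involve the variable along $\alpha^\vee$. More precisely, writing $v=x\,\tfrac{\alpha^\vee}{\langle\alpha^\vee,\alpha^\vee\rangle^{1/2}}$-direction plus a complementary part, one has $\alpha(v)=$ (a scalar multiple of) $x$, and both $\Delta_\alpha$ and $I(\alpha)$ act as the corresponding one-variable divided difference $\tfrac{g(-x)-g(x)}{x}$ (in the appropriate normalization coming from $\alpha^\vee$) and the one-variable integral $g\mapsto\int_0^x g(-t)\,dt$ respectively. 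So it suffices to check the claim for $V=\mathbb{R}$, $R=\{\pm\alpha\}$ of type $A_1$, where the pairing is $(p,f)=(p(\partial)f)(0)$.

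Second, I would verify the one-variable identity on the spanning set $p=x^n$ ($n\geq 0$), using that $\bigl(x^n,f\bigr)=f^{(n)}(0)$. On the left, $\Delta_\alpha(x^n)=\tfrac{(-x)^n-x^n}{c\,x}=\tfrac{((-1)^n-1)}{c}x^{n-1}$ for the relevant constant $c$ coming from $\alpha^\vee$; pairing with $f$ gives $\tfrac{(-1)^n-1}{c}f^{(n-1)}(0)$, which vanishes for $n$ even and equals $-\tfrac{2}{c}f^{(n-1)}(0)$ for $n$ odd. On the right, $(I(\alpha)f)(x)=\tfrac{1}{c}\int_0^x f(-t)\,dt$ (same constant $c$), and one computes $\bigl(x^n,I(\alpha)f\bigr)=\tfrac{d^n}{dx^n}\Big|_{0}\tfrac{1}{c}\int_0^x f(-t)\,dt=\tfrac{1}{c}\tfrac{d^{n-1}}{dx^{n-1}}\Big|_0 f(-x)=\tfrac{(-1)^{n-1}}{c}f^{(n-1)}(0)$, which is $0$ for $n$ even and $\tfrac{1}{c}f^{(n-1)}(0)$ for $n$ odd. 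Comparing, the two sides agree for every $n\geq 0$, proving the rank one case; non-degeneracy of the pairing is clear since the $x^n$ separate functions through their Taylor coefficients.

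Third, I would package the reduction carefully: fix $\alpha\in R$, decompose $V_{\mathbb C}=\mathbb{C}\alpha^\vee\oplus W$ where $W=\ker\alpha$, correspondingly $S(V_{\mathbb C})\cong\mathbb{C}[x]\otimes S(W)$ and $C^\omega(V)\cong C^\omega(\mathbb{R})\,\widehat\otimes\,C^\omega(W')$ (complementary real coordinates $W'$), and note that $\Delta_\alpha=\Delta\otimes\mathrm{id}$, $I(\alpha)=I\otimes\mathrm{id}$, while the pairing factors as a product of the one-variable pairing and the pairing in the $W$-directions. Then the identity for general $V$ follows from the rank one identity by applying it in the $x$-slot and using that the $W$-part is inert. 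The main obstacle here is purely bookkeeping: getting the normalization constant $c$ (which is $\langle\alpha^\vee,\alpha^\vee\rangle^{1/2}$ up to convention, equivalently the factor by which $\alpha$ and the chosen coordinate differ) consistent between the definition $\Delta_\alpha(p)=\bigl(s_\alpha(p)-p\bigr)/\alpha^\vee$ and the definition of $I(\alpha)$ with its $Da^\vee=\alpha^\vee$ in the integrand, so that the constants on the two sides genuinely cancel. Once the conventions are lined up the computation is routine, so I do not anticipate any conceptual difficulty — only the need for care with the $\alpha^\vee$-versus-$\alpha$ scaling.
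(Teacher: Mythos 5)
Your overall strategy --- reduce to rank one along the line $\mathbb{R}\alpha^\vee$ (legitimate, since $\Delta_\alpha$ and $I(\alpha)$ only involve that direction, commute with $\partial_w$ for $w\in\ker\alpha$, and the pairing at $p=x^nq$ with $q\in S(\ker\alpha)_{\mathbb C}$ only sees the restriction of $q(\partial)f$ to that line) and then test against monomials --- is sound, and the identity is true. But the central rank-one computation as you wrote it is wrong, and with your own numbers the two sides do \emph{not} agree. The point you miss is that $(I(\alpha)f)(v)=\int_0^{\alpha(v)}f(v-t\alpha^\vee)\,dt$ is the integral of $f$ over the segment joining $v$ to its mirror image $s_\alpha(v)$. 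In a coordinate where $\alpha^\vee=c\,e$ with $e$ a unit vector and $x=e$ the polynomial generator, this reads $(I(\alpha)f)(xe)=\tfrac1c\int_{-x}^{x}\tilde f(u)\,du$ with $\tilde f(u)=f(ue)$, not $\tfrac1c\int_0^xf(-t)\,dt$. Differentiating the correct expression picks up contributions from \emph{both} moving endpoints, $\tfrac{d}{dx}\int_{-x}^{x}\tilde f=\tilde f(x)+\tilde f(-x)$, so $\bigl(x^n,I(\alpha)f\bigr)=\tfrac1c\bigl(1+(-1)^{n-1}\bigr)\tilde f^{(n-1)}(0)$, which is $0$ for $n$ even and $\tfrac2c \tilde f^{(n-1)}(0)$ for $n$ odd --- and that factor of $2$ is exactly what matches the factor $(-1)^n-1=-2$ on your left-hand side. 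With your formula the $n$-th derivative at $0$ is $\tfrac{(-1)^{n-1}}{c}f^{(n-1)}(0)$: it does not vanish for $n$ even (contrary to what you assert), and for $n$ odd it gives $\tfrac1cf^{(n-1)}(0)$, which is off by a factor of $2$ from your left-hand side $-\tfrac2cf^{(n-1)}(0)$. So as written the verification fails; it is rescued only by replacing your one-variable model of $I(\alpha)$ with the correct one.

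For the record, the paper does not prove this lemma (it cites Gutkin), so there is no in-paper argument to compare against; the above concerns only the internal correctness of your computation.
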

We obtain the following immediate consequence.
\begin{cor}\label{step}
Let $M$ be a left $W^a$-module.
Consider $C^\omega(V)\otimes_{\mathbb{C}}M$ as linear
subspace of $\bigl(S(V_{\mathbb{C}})\otimes_{\mathbb{C}}M^*\bigr)^*$
by interpreting $f\otimes m\in C^\omega(V)\otimes_{\mathbb{C}}M$
as the linear functional
\[p\otimes \psi\mapsto \bigl(p,f\bigr)\psi(m),
\qquad\quad p\in S(V_{\mathbb{C}}), \psi\in M^*.
\]
Then $C^\omega(V)\otimes_{\mathbb{C}}M$ is a $A(k)$-submodule
of $\bigl(S(V_{\mathbb{C}})\otimes_{\mathbb{C}}M^*\bigr)^*$. The corresponding
left $A(k)$-action on $C^\omega(V)\otimes_{\mathbb{C}}M$ is explicitly given by
\begin{equation*}
\begin{split}
r&\mapsto r(\partial)\otimes\textup{Id}_M, \,\,\qquad
\qquad\qquad\qquad\qquad r\in S(V_{\mathbb{C}}),\\
s_a&\mapsto s_{Da}\otimes s_a-k_aI(Da)\otimes\textup{Id}_M, 
\qquad\quad\,\,\, a\in F^a,\\
\omega&\mapsto D\omega\otimes\omega,\qquad\qquad\qquad\qquad\qquad\quad\,\,\,\,
\omega\in\Omega. 
\end{split}
\end{equation*}
\end{cor}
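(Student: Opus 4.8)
The plan is to produce the $A(k)$-submodule structure on $C^\omega(V)\otimes_{\mathbb{C}}M$ by pairing the explicit action \eqref{almost} on $S(V_{\mathbb{C}})\otimes_{\mathbb{C}}M^*$ against the dagger-twisted dual action supplied by the previous lemma, and then evaluating everything against the pairing $(\cdot,\cdot)$ of Lemma \ref{G}. Concretely, for $X\in A(k)$, $f\otimes m\in C^\omega(V)\otimes_{\mathbb{C}}M$ and $p\otimes\psi\in S(V_{\mathbb{C}})\otimes_{\mathbb{C}}M^*$ I would compute $\bigl(X(f\otimes m)\bigr)(p\otimes\psi)=(f\otimes m)(X^\dagger(p\otimes\psi))$ and show that, for $X$ ranging over the algebra generators $r\in S(V_{\mathbb{C}})$, $s_a$ ($a\in F^a$) and $\omega\in\Omega$, the result again has the form $(q,g)\phi(m')$ for an explicitly identified $g\otimes m'\in C^\omega(V)\otimes_{\mathbb{C}}M$ depending linearly on $f\otimes m$; reading off $g\otimes m'$ gives the formulas in the statement. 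Since these generators generate $A(k)$ and the subspace is closed under each, it is closed under all of $A(k)$, so it is a submodule.

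First I would handle $r\in S(V_{\mathbb{C}})$. Here $r^\dagger=r$, and by \eqref{almost} we have $r(p\otimes\psi)=(rp)\otimes\psi$, so $\bigl(r(f\otimes m)\bigr)(p\otimes\psi)=(rp,f)\psi(m)=\bigl((rp)(\partial)f\bigr)(0)\psi(m)=\bigl(r(\partial)(p(\partial)f)\bigr)(0)\psi(m)=(p,r(\partial)f)\psi(m)$, using that $r(\partial)$ and $p(\partial)$ are commuting constant-coefficient operators. This identifies the action of $r$ with $r(\partial)\otimes\textup{Id}_M$. Next, for $\omega\in\Omega$ we have $\omega^\dagger=\omega^{-1}$ and, by \eqref{almost}, $\omega^{-1}(p\otimes\psi)=(D\omega^{-1})(p)\otimes\omega^{-1}\psi$; pairing gives $\bigl((D\omega^{-1})(p),f\bigr)(\omega^{-1}\psi)(m)=\bigl(p,(D\omega)f\bigr)\psi(\omega m)$, where I use that the form $(\cdot,\cdot)$ is $W^a$-invariant in the sense $\bigl((Dw)p,f\bigr)=\bigl(p,(Dw)^{-1}f\bigr)$ for the linear $W$-action (immediate from the chain rule, since $(Dw)(\partial)=(Dw)\circ\partial\circ(Dw)^{-1}$) and the definition of the dual $W^a$-action on $M^*$. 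This yields $\omega\mapsto D\omega\otimes\omega$.

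The main case — and the only place any real work is needed — is $s_a$ for $a\in F^a$, where the integral operators enter. We have $s_a^\dagger=s_a^{-1}=s_a$, and by \eqref{almost} $s_a(p\otimes\psi)=s_{Da}(p)\otimes s_a\psi+k_a\Delta_{Da}(p)\otimes\psi$. Pairing against $f\otimes m$ gives $\bigl(s_{Da}(p),f\bigr)(s_a\psi)(m)+k_a\bigl(\Delta_{Da}(p),f\bigr)\psi(m)$. For the first term the $W$-invariance of $(\cdot,\cdot)$ turns $\bigl(s_{Da}(p),f\bigr)$ into $\bigl(p,s_{Da}f\bigr)$ and $(s_a\psi)(m)=\psi(s_a^{-1}m)=\psi(s_am)$, contributing $s_{Da}\otimes s_a$. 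For the second term Lemma \ref{G} gives $\bigl(\Delta_{Da}(p),f\bigr)=-\bigl(p,I(Da)f\bigr)$, contributing $-k_aI(Da)\otimes\textup{Id}_M$. Adding the two reproduces exactly the claimed formula $s_a\mapsto s_{Da}\otimes s_a-k_aI(Da)\otimes\textup{Id}_M$. I expect the only subtlety to be bookkeeping of the $W$-action conventions: one must be careful that $\Delta_{Da}$ in \eqref{almost} refers to $\Delta_\alpha$ with $\alpha=Da\in R$ a genuine (finite) root so that Lemma \ref{G} applies verbatim, and that the $W^a$-action on $M^*$ is the contragredient one, so that dualizing $s_a$ on $M^*$ produces $s_a$ (not $s_a^{-1}$, which is anyway equal) acting on $M$. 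Once these conventions are pinned down, all three computations are routine and the corollary follows immediately from the lemma on dual modules together with Lemma \ref{G}.
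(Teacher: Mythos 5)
Your proposal is correct and follows exactly the paper's route: the paper's proof says the claim reduces, by chasing the actions through the dagger-dualized module, to Lemma \ref{G} together with the identities $\bigl(rp,f\bigr)=\bigl(p,r(\partial)f\bigr)$ and $\bigl(w(p),f\bigr)=\bigl(p,w^{-1}(f)\bigr)$, which are precisely the three computations you carry out. The only difference is that you have written out the generator-by-generator bookkeeping (including the correct observations that $r^\dagger=r$, $s_a^\dagger=s_a$, $\omega^\dagger=\omega^{-1}$) which the paper leaves implicit.
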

\begin{proof}
Chasing the actions, the proof easily reduces to Lemma \ref{G}
and the obvious identities
\begin{equation*}
\begin{split}
\bigl(rp,f\bigr)&=\bigl(p,r(\partial)f\bigr), \quad
\qquad\,\, r\in S(V_{\mathbb{C}}),\\
\bigl(w(p),f\bigr)&=\bigl(p,w^{-1}(f)\bigr),
\qquad\,\,\, w\in W
\end{split}
\end{equation*}
for $p\in S(V_{\mathbb{C}})$ and $f\in C^\omega(V)$. 
\end{proof}

Note that the action of $W^a$ on the first tensor leg
$C^\omega(V)$ of the $A(k)$-module $C^\omega(V)\otimes_{\mathbb{C}}M$ 
is the pull-back action of $W$ under the gradient map $D$ 
(the commutative subgroup $Y$ of $W^a$ acts trivially).
We now upgrade it to the standard action of $W^a$ on $C^\omega(V)$.

\begin{thm}\label{intrefl}
Let $M$ be a complex left $W^a$-module and $k$ a multiplicity
function on $R^a$. The assignments 
\begin{equation*}
\begin{split}
v&\mapsto \partial_v\otimes\textup{Id}_M, 
\,\,\qquad\qquad\qquad\qquad v\in V,\\
s_a&\mapsto s_a\otimes s_a-k_aI(a)\otimes\textup{Id}_M,\qquad\,\, 
a\in F^a,\\
\omega&\mapsto \omega\otimes\omega, \qquad\qquad\qquad\qquad\quad\,\,\,\,\, 
\omega\in\Omega 
\end{split}
\end{equation*}
uniquely extend to an algebra homomorphism 
$Q_{k,M}: A(k)\rightarrow 
\textup{End}_{\mathbb{C}}\bigl(C^\omega(V)\otimes_{\mathbb{C}}M\bigr)$.
\end{thm}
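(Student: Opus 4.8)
The plan is to verify the defining relations of $A(k)$ directly for the proposed operators, using Corollary \ref{step} to reduce almost everything to what has already been established. The key observation is that the proposed assignment $Q_{k,M}$ differs from the $A(k)$-action on $C^\omega(V)\otimes_{\mathbb{C}}M$ of Corollary \ref{step} only by replacing the gradient action $D\omega\otimes\omega$ and $s_{Da}\otimes s_a$ on the first tensor leg with the genuine affine Weyl action $\omega\otimes\omega$ and $s_a\otimes s_a$, while leaving the integral operators $k_aI(a)$ and the derivations $\partial_v$ untouched. So the strategy is to conjugate the Corollary \ref{step} action by a suitable intertwining operator, or else to recheck the relations from scratch; I would choose the latter since the relations are short.

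First I would record what has to be checked: that the operators $\sigma_a:=s_a\otimes s_a-k_aI(a)\otimes\textup{Id}_M$ ($a\in F^a$) and $\varpi:=\omega\otimes\omega$ ($\omega\in\Omega$) together with $\partial_v\otimes\textup{Id}_M$ satisfy (ii) the decomposition $S(V_{\mathbb{C}})\otimes\mathbb{C}[W^a]\to A(k)$ — here this is automatic since $C^\omega(V)\otimes M$ is already an $A(k)$-module via Corollary \ref{step} and one only needs to see the new operators still generate the same thing — (iii) the cross relations $s_a\cdot v-s_{Da}(v)\cdot s_a=-k_a\,(Da)(v)$, and (iv) $\omega\cdot v=(D\omega)(v)\cdot\omega$, plus the braid/Coxeter relations among the $\sigma_a$ and $\varpi$ that make $w\mapsto$ (its image) a well-defined representation of $W^a$. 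For the cross relation (iii): applying $\sigma_a$ then $\partial_v$ versus $\partial_{s_{Da}v}$ then $\sigma_a$, the $s_a\otimes s_a$ part contributes $\partial_{s_{Da}v}\,(s_a\otimes s_a)$ on both sides and cancels (since $s_a$ acts on $C^\omega(V)$ affinely, $\partial_v\circ s_a=s_a\circ\partial_{s_{Da}v}$ because the translation part of $s_a$ is killed by differentiation), while the integral part gives $[\partial_v,I(a)]=(Da)(v)\,\textup{Id}$ on $C^\omega(V)$ by the fundamental theorem of calculus applied to the defining integral of $I(a)$. That yields exactly $-k_a(Da)(v)$. Relation (iv) is immediate since $\omega\otimes\omega$ genuinely permutes things and $\partial_v\circ\omega=\omega\circ\partial_{(D\omega)^{-1}v}$.

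The remaining ingredient is that $\sigma_a$ and $\varpi$ satisfy the $W^a$ relations. Here I would use \eqref{invariance}, $wI(a)w^{-1}=I(w(a))$, which says the integral operators transform covariantly under the full affine action; combined with the fact that $s_a\otimes s_a$ and $\omega\otimes\omega$ already satisfy the $W^a$ relations on $C^\omega(V)\otimes M$, one checks that the ``corrected'' operators do too. In practice this is the same computation as the one showing the $I_{s_a}$-type or Demazure–Lusztig-type operators satisfy braid relations; alternatively, one can invoke that the Corollary \ref{step} operators $s_{Da}\otimes s_a-k_aI(Da)\otimes\textup{Id}_M$ already satisfy the $W^a$ relations and observe that the passage to $s_a\otimes s_a-k_aI(a)\otimes\textup{Id}_M$ is exactly the gauge by which the pull-back $W$-action on $C^\omega(V)$ is promoted to the $W^a$-action — a conjugation by an element built from translations, which commutes with $\partial_v$ and intertwines $I(Da)$ with $I(a)$ via \eqref{invariance}. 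Either way, once the $W^a$ relations and the cross relations hold, Definition of $A(k)$ gives the algebra homomorphism, and uniqueness is clear since $V$ and $W^a$ generate $A(k)$.

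The main obstacle, I expect, is bookkeeping the affine (translation) parts carefully: one must be sure that $\partial_v$ interacts correctly with the translation components of $s_0$ and of $\omega$, and that $I(a)$ for an affine root $a=\alpha+m1$ is the right operator (its integrand involves $Da^\vee=\alpha^\vee$ but its limits involve $a(v)$, not $\alpha(v)$), so that the commutator $[\partial_v,I(a)]$ produces $(Da)(v)$ and not $\alpha(v)$ — consistent with the $A(k)$ cross relation which has $Da(v)$ on the right. Modulo this care, the proof is a direct verification, and I would present it as ``analogous to Corollary \ref{step} and \cite[Thm. 4.1]{EOS}, combined with \eqref{invariance}.''
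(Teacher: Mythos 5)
Your proposal is essentially correct, and your ``alternative'' route for the hard part is exactly the paper's argument: the paper observes that $Q_{k,M}(v)=Q(v)$ and $Q_{k,M}(s_\alpha)=Q(s_\alpha)$ for $\alpha\in F$ (so only $s_0$ and $\Omega$ need new treatment), and then writes $Q_{k,M}(s_0)=(t_u\otimes\textup{Id}_M)Q(s_0)(t_u^{-1}\otimes\textup{Id}_M)$ for $u\in V$ with $\theta(u)=1$, reducing the relations $Q_{k,M}(s_0)^2=\textup{Id}$, the cross relation for $s_0$, and the braid relations to the already-established Corollary \ref{step}. Two points deserve correction or sharpening. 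First, your commutator formula is wrong as literally stated: by the fundamental theorem of calculus one gets $\partial_vI(a)-I(a)\partial_v=(Da)(v)\,s_a$ (the reflection appears, not the identity, because differentiating the upper limit $a(v)$ evaluates the integrand at $s_a(v)$). The identity that actually enters the cross relation $s_a\cdot v-s_{Da}(v)\cdot s_a=-k_a(Da)(v)$ is the \emph{twisted} commutator $I(a)\partial_v-\partial_{s_{Da}v}I(a)=(Da)(v)\,\textup{Id}$, which follows from $I(a)\partial_{Da^\vee}=\textup{Id}-s_a$; your setup for the $s_a\otimes s_a$ part shows you intended the twisted version, but the formula as written would give the wrong answer if used in an untwisted commutator. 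Second, for the braid relation between $s_0$ and $s_\alpha$ ($\alpha\in F$) the conjugation trick requires a \emph{single} $u$ with $\theta(u)=1$ \emph{and} $\alpha(u)=0$, so that $t_u$ simultaneously converts $Q(s_0)$ into $Q_{k,M}(s_0)$ and fixes $Q(s_\alpha)$; such $u$ exists precisely because a finite-order relation $(s_\alpha s_0)^m=1$ forces $\alpha\notin\mathbb{R}\theta$. The paper makes this choice explicit, and without it the ``direct verification'' of the braid relations for the integral-reflection operators would be a genuinely nontrivial computation that your sketch does not carry out.
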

\begin{proof} 
Denote $Q: A(k)\rightarrow
\textup{End}_{\mathbb{C}}\bigl(C^\omega(V)\otimes_{\mathbb{C}}M\bigr)$
for the representation map associated to the $A(k)$-action of 
Corollary \ref{step}. Since the explicit assignment $Q_{k,M}$
in the statement of the theorem satisfies
$Q_{k,M}(v)=Q(v)$ and 
$Q_{k,M}(s_\alpha)=Q(s_\alpha)$ 
for $v\in V$ and $\alpha\in F$, it remains to verify the following
identities.
\begin{enumerate}
\item[{\it (i)}] $Q_{k,M}(s_0)^2=\hbox{Id}$.
\item[{\it (ii)}] If $(s_{\alpha}s_0)^m=1$
in $W^a$ for some $\alpha\in F$ and $m\in\mathbb{N}$, then
\[\bigl(Q_{k,M}(s_\alpha)Q_{k,M}(s_0)\bigr)^m=\hbox{Id}.
\]
\item[{\it (iii)}] $Q_{k,M}(\omega)Q_{k,M}(s_a)=Q_{k,M}(s_{\omega(a)})
Q_{k,M}(\omega)$ for $a\in F^a$ and $\omega\in\Omega$.
\item[{\it (iv)}] The cross relations
\[Q_{k,M}(s_0)(\partial_v\otimes \textup{Id}_M)-
(\partial_{s_\theta(v)}\otimes\textup{Id}_M)Q_{k,M}(s_0)=k_0\theta(v)
\]
for $v\in V$.
\item[{\it (v)}] $Q_{k,M}(\omega)(\partial_v\otimes \hbox{Id}_M)=
(\partial_{D\omega(v)}\otimes\hbox{Id}_M)Q_{k,M}(\omega)$
for $\omega\in\Omega$ and $v\in V$.
\end{enumerate}
The identities {\it (i)}-{\it (iii)} show that $Q_{k,M}$ 
defines a $W^a$-action on $C^\omega(V)\otimes_{\mathbb{C}}M$. 
The identities {\it (iv)} and {\it (v)} ensure that
also all the necessary cross relations are satisfied in order for
$Q_{k,M}$ to extend to an algebra homomorphism
$Q_{k,M}: A(k)\rightarrow \textup{End}_{\mathbb{C}}
\bigl(C^\omega(V)\otimes_{\mathbb{C}}M\bigr)$. 

The proofs of {\it (i)}--{\it (v)} are much facilitated by the 
simple observation that
\begin{equation}\label{h}
Q_{k,M}(s_0)=(t_{u}\otimes \hbox{Id}_M)Q(s_{0})
(t_{u}^{-1}\otimes \hbox{Id}_M)
\end{equation}
for any $u\in V$ such that $\theta(u)=1$ (in which case
$t_{u}(-\theta)=-\theta+1=a_0$).
By \eqref{h} and Corollary \ref{step}
the identities {\it (i)}, {\it (iv)} and {\it (v)}
are immediate. For the braid relation {\it (ii)}, observe that
$(s_{\alpha}s_0)^m=1$ in $W^a$ for some $\alpha\in F$ and 
$m\in\mathbb{N}$ implies that $\alpha$ is not a scalar
multiple of $\theta$ in $V^*$. Hence there exists a vector $u\in V$ 
such that $\alpha(u)=0$ and $\theta(u)=1$.
In this case we have, besides \eqref{h}, 
\[Q_{k,M}(s_\alpha)=(t_{u}\otimes\hbox{Id}_M)Q(s_\alpha)
(t_{u}^{-1}\otimes \hbox{Id}_M).
\]
Hence $\bigl(Q_{k,M}(s_\alpha)Q_{k,M}(s_0)\bigr)^m=\textup{Id}$
follows by conjugating the corresponding valid identity for $Q$
by $(t_u\otimes\hbox{Id}_M$).

The identities {\it (iii)} can be checked by a direct computation.
\end{proof}
\begin{rema}\label{FunctorF}
{\bf (i)}
For $k_0=k_\theta$, $Y=Q^\vee$ and 
$M=\mathbb{I}$, Theorem \ref{intrefl} reduces to \cite[Thm. 5.2]{EOS}.\\
{\bf (ii)} The assignment $M\mapsto C^\omega(V)\otimes_{\mathbb{C}}M$
defines a covariant functor $F_{ir}^k: \textup{Mod}_{\mathbb{C}[W^a]}\rightarrow
\textup{Mod}_{A(k)}$ (with the obvious definition on morphisms).
\end{rema}

Let $\textup{Forg}^k: \textup{Mod}_{A(k)}\rightarrow 
\textup{Mod}_{\mathbb{C}[W^a]}$
be the forgetful functor. In the following proposition we compare
the space $F_{ir}^k(M)^{W^a}$ of $W^a$-invariants of the
$W^a$-module $\textup{Forg}^k\bigl(F_{ir}^k(M)\bigr)$ with the space 
$\bigl(F_{ir}^k(\mathbb{I})\otimes M\bigr)^{W^a}$ of $W^a$-invariants
of the tensor product
$W^a$-module $\textup{Forg}^k\bigl(F_{ir}^k(\mathbb{I})\bigr)\otimes M$:
\begin{prop}\label{invariantsequal}
Let $M$ be a left $W^a$-module. Then
\[F_{ir}^k(M)^{W^a}=\bigl(F_{ir}^k(\mathbb{I})\otimes M\bigr)^{W^a}.
\]
\end{prop}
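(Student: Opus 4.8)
The plan is to realise both $W^a$-invariant subspaces inside the common underlying space $C^\omega(V)\otimes_{\mathbb{C}}M$ and to compare the two $W^a$-module structures one generator at a time. Using Theorem \ref{intrefl} together with the definition of the tensor product in $\textup{Mod}_{\mathbb{C}[W^a]}$, one records that on $\textup{Forg}^k\bigl(F_{ir}^k(M)\bigr)$ the generator $s_a$ ($a\in F^a$) acts by $s_a\otimes s_a-k_aI(a)\otimes\textup{Id}_M$ and $\omega\in\Omega$ by $\omega\otimes\omega$, whereas on $\textup{Forg}^k\bigl(F_{ir}^k(\mathbb{I})\bigr)\otimes M$ the generator $s_a$ acts by $\bigl(s_a-k_aI(a)\bigr)\otimes s_a=s_a\otimes s_a-k_aI(a)\otimes s_a$ and $\omega$ again by $\omega\otimes\omega$. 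Thus the two structures agree on $\Omega$, and for each $a\in F^a$ they differ only in the ``integral term'', by the operator $E_a:=k_aI(a)\otimes(\textup{Id}_M-s_a)$ on $C^\omega(V)\otimes_{\mathbb{C}}M$. Since $W^a$ is generated by $\{s_a\mid a\in F^a\}\cup\Omega$, it suffices to prove, for each fixed $a\in F^a$, that $\phi\in C^\omega(V)\otimes_{\mathbb{C}}M$ satisfies $\bigl(s_a\otimes s_a-k_aI(a)\otimes\textup{Id}_M\bigr)\phi=\phi$ if and only if it satisfies $\bigl(s_a\otimes s_a-k_aI(a)\otimes s_a\bigr)\phi=\phi$; as these two operators differ by $E_a$, this in turn reduces to showing that $E_a\phi=0$ as soon as $\phi$ is fixed by either one of them (when $k_a=0$ the two operators coincide and there is nothing to prove).

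The next step is to restrict everything to the affine lines transverse to the affine root hyperplane $V_a$. Fix $v_0\in V_a$ and put $\Phi(r):=\phi(v_0+rDa^\vee)$, a real-analytic $M$-valued function of $r\in\mathbb{R}$. On such a line $s_a$ sends $v_0+rDa^\vee$ to $v_0-rDa^\vee$, one has $a(v_0+rDa^\vee)=2r$, and a one-line substitution in the definition of $I(a)$ gives $\bigl(I(a)f\bigr)(v_0+rDa^\vee)=\int_{-r}^{r}f(v_0+uDa^\vee)\,du$. Hence the invariance equation $\bigl(s_a\otimes s_a-k_aI(a)\otimes\textup{Id}_M\bigr)\phi=\phi$ restricts to
\[
\Phi(r)=s_a\Phi(-r)-k_a\int_{-r}^{r}\Phi(u)\,du,\qquad r\in\mathbb{R},
\]
where $s_a$ on the right acts on the $M$-values, and the invariance equation for $F_{ir}^k(\mathbb{I})\otimes M$ restricts to the same identity with $\Phi(u)$ replaced by $s_a\Phi(u)$ under the integral. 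As every point of $V$ lies on exactly one such line and the lines sweep out all of $V$, passing to lines loses no information.

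Finally, the key observation: starting from either of the two line identities, substituting $r\mapsto-r$, applying $s_a$ and using $s_a^2=\textup{Id}_M$, and then eliminating $s_a\Phi(-r)$, one obtains in both cases the single relation
\[
\int_{-r}^{r}\bigl(\textup{Id}_M-s_a\bigr)\Phi(u)\,du=0\qquad\text{for all }r\in\mathbb{R}.
\]
This says precisely that $E_a\phi$ vanishes on the chosen line; since $v_0\in V_a$ was arbitrary and the lines cover $V$, we get $E_a\phi=0$. But the two line identities differ exactly by the term $k_a\int_{-r}^{r}(\textup{Id}_M-s_a)\Phi(u)\,du$, which we have just shown to be zero, so each of the two $s_a$-invariance conditions implies the other. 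Running this over all $a\in F^a$ and invoking the coincidence of the $\Omega$-actions yields $F_{ir}^k(M)^{W^a}=\bigl(F_{ir}^k(\mathbb{I})\otimes M\bigr)^{W^a}$.

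I expect the only real subtlety to be organisational: keeping apart the two superficially almost identical $W^a$-actions and correctly restricting the integral-reflection operators $I(a)$ to the transverse lines of $V_a$. The substantive point — that the discrepancy $E_a$ is automatically annihilated on $s_a$-invariants — falls out of the elementary $r\mapsto-r$ symmetrisation, which is where the fact that $I(a)$ is a genuine integration operator (and not merely an arbitrary ``lower order'' term) actually gets used; after that the argument is routine.
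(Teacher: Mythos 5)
Your proof is correct. The skeleton coincides with the paper's: both arguments place the two module structures on the common underlying space $C^\omega(V)\otimes_{\mathbb{C}}M$, observe that the $\Omega$-actions literally agree, and reduce the statement to comparing, for each fixed $a\in F^a$, the fixed spaces of $Q_{k,M}(s_a)$ and of the tensor-product action of $s_a$, the two operators differing by $R_M(s_a)=k_aI(a)\otimes(\pi_M(s_a)-\textup{Id}_M)$ (your $E_a$ up to sign, which is immaterial since only its vanishing is used). Where you diverge is in how this discrepancy is shown to vanish on invariants. The paper decomposes $f=f_++f_-$ into $s_a$-invariant and antiinvariant parts for the tensor action and invokes the facts (via \cite[Lemma 3.4]{EOS}) that $I(a)$ annihilates antiinvariant functions and maps invariants to antiinvariants; from this it deduces that $R_M(s_a)$ kills the $+$-space and sends the $-$-space into the $+$-space, and a short eigenspace computation then forces $f_-=0$. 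You avoid the eigenspace decomposition altogether and establish the vanishing directly by restricting to the lines $v_0+\mathbb{R}Da^\vee$ transverse to $V_a$, where the $r\mapsto-r$ substitution and elimination of $s_a\Phi(-r)$ yield $\int_{-r}^{r}(\textup{Id}_M-s_a)\Phi(u)\,du=0$ from either invariance equation. This is in essence a self-contained re-derivation, in the presence of the $M$-leg, of the same oddness property of $I(a)$ that the paper imports; your version treats the two actions symmetrically and needs no external lemma, while the paper's yields the slightly stronger structural byproduct that the antiinvariant component of any invariant vector vanishes. Your supporting computations ($a(v_0+rDa^\vee)=2r$, the change of variables turning $I(a)$ into $\int_{-r}^{r}$, the sufficiency of checking generators, and the separate treatment of $k_a=0$) all check out.
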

\begin{proof}
It is convenient to set out some notations first.
We denote $\pi_M$ for the representation map of $M$.
The space $C^\omega(V)$ will be considered with respect to two
different left $W^a$-actions. The first is the 
regular $W^a$-action
\begin{equation}\label{regularaction}
(L(w)g)(v)=g(w^{-1}v),\qquad g\in C^\omega(V),\,\, v\in V,\,\, w\in W^a.
\end{equation}
The resulting $W^a$-module will still be denoted by $C^\omega(V)$. 
The other $W^a$-action is 
$Q_{k,\mathbb{I}}|_{W^a}$, in which case we denote the $W^a$-module
by $\textup{Forg}^k\bigl(F_{ir}^k(\mathbb{I})\bigr)$. We omit
the forgetful functor from the notations from now on.
 
Observe that both $W^a$-modules $F_{ir}^k(M)$ and
$F_{ir}^k(\mathbb{I})\otimes M$
have the same underlying vector space 
$C^\omega(V)\otimes_{\mathbb{C}}M$. Their subspaces of 
$\Omega$-invariants coincide trivially.

Fix $a\in F^a$. For a left $W^a$-module $N$ we write $N_+$ (respectively
$N_-$) for the space of $s_a$-invariants (respectively $s_a$-antiinvariants)
in $N$. It suffices to show that 
\begin{equation}\label{equivalence}
F_{ir}^k(M)_+=\bigl(F_{ir}^k(\mathbb{I})\otimes M\bigr)_+.
\end{equation}

By Theorem \ref{intrefl} we have
\[Q_{k,M}(s_a)=Q_{k,\mathbb{I}}(s_a)\otimes\pi_M(s_a)+R_M(s_a)
\]
with $R_M(s_a)=k_aI(a)\otimes\bigl(\pi_M(s_a)-\textup{Id}_M)$. We claim
that
\begin{equation}\label{Restaction}
\begin{split}
R_M(s_a)\bigl((F_{ir}^k(\mathbb{I})\otimes M)_+\bigr)&=\{0\},\\
R_M(s_a)\bigl((F_{ir}^k(\mathbb{I})\otimes M)_-\bigr)&\subseteq 
(F_{ir}^k(\mathbb{I})\otimes M)_+.
\end{split}
\end{equation}
To prove \eqref{Restaction}, we first note that
\[I(a)\bigl(C^\omega(V)_-\bigr)=\{0\},\qquad
I(a)\bigl(C^\omega(V)_+\bigr)\subseteq C^\omega(V)_-,
\]
see, e.g., \cite[Lemma 3.4]{EOS} for the first part. This lifts
to the (anti)invariants with respect to the $k$-dependent actions,
\[I(a)\bigl(F_{ir}^k(\mathbb{I})_-\bigr)=\{0\},\qquad
I(a)\bigl(F_{ir}^k(\mathbb{I})_+\bigr)\subseteq F_{ir}^k(\mathbb{I})_-,
\]
since $Q_{k,\mathbb{I}}(s_a)g=\pm g$ implies $k_aI(a)g=(L(s_a)\mp \textup{Id})g$. 
This in turn implies \eqref{Restaction}. 

We are now ready to prove \eqref{equivalence}.
The inclusion $\supseteq$ of \eqref{equivalence} is an immediate
consequence of \eqref{Restaction}. For the converse inclusion
we take $f\in F_{ir}^k(M)_+$ and write $f=f_++f_-$ with
$f_{\pm}\in \bigl(F_{ir}^k(\mathbb{I})\otimes M\bigr)_{\pm}$. 
By \eqref{Restaction} we have
\[f=Q_{k,M}(s_a)f=(f_++R_M(s_a)f_-)-f_-
\]
hence $f_+=f_++R_M(s_a)f_-$ and $f_-=-f_-$, again by \eqref{Restaction}.
We conclude that $f_-=0$ and $f=f_+\in\bigl(F_{ir}^k(\mathbb{I})\otimes M)_+$. 
\end{proof}


\subsection{The propagation transformation}\label{propag}
Let $M$ be a left $W^a$-module.
The $W^a$-action on $C^\omega(V)\otimes_{\mathbb{C}}M$ 
in terms of integral-reflection
operators (Theorem \ref{intrefl}) can be used to propagate a plane wave
attached to the fundamental chamber $C_+$ to a common eigenfunction of the
Dunkl operators $\mathcal{D}_v^{k,M}$ ($v\in V$). 
This idea goes back to Gutkin and Sutherland \cite{GS,G} 
and was further explored
in \cite[\S 5]{EOS} for $M=\mathbb{I}$. 
In the present set-up it will give rise to a natural 
transformation $T^k: F_{ir}^k\rightarrow F_{dr}^k$ 
between the two functors 
$F_{ir}^k,F_{dr}^k: \textup{Mod}_{\mathbb{C}[W^a]}\rightarrow 
\textup{Mod}_{A(k)}$ (see Remark \ref{FunctorF}{\bf (ii)} and
Corollary \ref{FunctorG}{\bf (i)}). 

We now first define $T^{k,M}$ for a given $W^a$-module $M$ as a complex
linear map.
\begin{lem}
Let $M$ be a left $W^a$-module.
There exists a unique linear map
\[T^{k,M}: C^\omega(V)\otimes_{\mathbb{C}}M\rightarrow B^\omega(V;M)
\]
satisfying $\bigl(T^{k,M}f\bigr)_{C_+}=f$ and $T^{k,M}(Q_{k,M}(w)f)=w\cdot 
(T^{k,M}f)$ for all $w\in W^a$.
\end{lem}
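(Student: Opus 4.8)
The plan is to use the identification $B^\omega(V;M)\simeq\prod_{C\in\mathcal{C}}\bigl(C^\omega(V)\otimes_{\mathbb{C}}M\bigr)$ from Remark \ref{multi}{\bf (i)}: giving the linear map $T^{k,M}$ is the same as prescribing, for every chamber $C\in\mathcal{C}$, a linear map $C^\omega(V)\otimes_{\mathbb{C}}M\to C^\omega(V)\otimes_{\mathbb{C}}M$ producing the component $(T^{k,M}f)_C$. Recall also that the dot-action satisfies $(w\cdot h)_C=(w\otimes w)h_{w^{-1}C}$, and that $Q_{k,M}|_{W^a}$ is a group homomorphism by Theorem \ref{intrefl}.

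First I would derive the forced formula, which simultaneously yields uniqueness and dictates the definition. Fix a chamber $C$ and choose $w\in W^a$ with $w(C_+)=C$; this is possible since already $W\ltimes Q^\vee$ acts transitively on $\mathcal{C}$. Applying the required identity $T^{k,M}(Q_{k,M}(w^{-1})g)=w^{-1}\cdot(T^{k,M}g)$, taking the $C_+$-component, and using $(T^{k,M}h)_{C_+}=h$ together with the formula for the dot-action, one is forced to have
\[
(T^{k,M}f)_C=(w\otimes w)\,Q_{k,M}(w^{-1})f .
\]
So this formula is the only candidate, and it remains to check it is well posed and has the two stated properties.

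For well-definedness one must see that the right-hand side does not depend on the choice of $w$: if $w(C_+)=w'(C_+)=C$, then $\omega:=w^{-1}w'$ stabilizes $C_+$, hence $\omega\in\Omega$, and the required equality $(w\otimes w)Q_{k,M}(w^{-1})=(w'\otimes w')Q_{k,M}(w'^{-1})$ reduces to $(\omega\otimes\omega)Q_{k,M}(\omega^{-1})=\textup{Id}$ on $C^\omega(V)\otimes_{\mathbb{C}}M$, which holds because $Q_{k,M}(\omega)=\omega\otimes\omega$ by Theorem \ref{intrefl}. The property $(T^{k,M}f)_{C_+}=f$ follows by taking $w=1$. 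For the equivariance $T^{k,M}(Q_{k,M}(v)f)=v\cdot(T^{k,M}f)$, fix a chamber $C$, pick $w$ with $w(C_+)=v^{-1}C$ so that $vw(C_+)=C$, and compute both $C$-components: the left side equals $(vw\otimes vw)Q_{k,M}((vw)^{-1})Q_{k,M}(v)f$ and the right side equals $(v\otimes v)(w\otimes w)Q_{k,M}(w^{-1})f$, and both collapse to $(vw\otimes vw)Q_{k,M}(w^{-1})f$ using that $Q_{k,M}|_{W^a}$ is a homomorphism. Linearity is immediate, and each component lies in $C^\omega(V)\otimes_{\mathbb{C}}M$, so indeed $T^{k,M}f\in B^\omega(V;M)$.

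The \emph{only} delicate point — the main obstacle, such as it is — is the well-definedness of the chamber components, and it hinges entirely on the fact that the length-zero elements $\omega\in\Omega$ act diagonally, without integral-reflection corrections, in the representation $Q_{k,M}$; this is precisely the $\omega$-assignment in Theorem \ref{intrefl}.
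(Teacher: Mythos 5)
Your proof is correct and follows essentially the same route as the paper, which simply records the forced formula $(T^{k,M}f)_{w^{-1}C_+}=(w^{-1}\otimes w^{-1})(Q_{k,M}(w)f)$ and declares existence and uniqueness immediate. You merely spell out the details the paper leaves implicit, in particular the well-definedness over the stabilizer $\Omega$ of $C_+$, which indeed rests on $Q_{k,M}(\omega)=\omega\otimes\omega$.
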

\begin{proof}
The required properties of $T^{k,M}$ 
can equivalently be formulated as 
\[
(T^{k,M}f)_{w^{-1}C_+}=(w^{-1}\otimes w^{-1})(Q_{k,M}(w)f)\qquad
\forall w\in W^a
\]
in $C^\omega(V)\otimes_{\mathbb{C}}M$. The
existence and uniqueness are now immediate.
\end{proof}
By construction the $T^{k,M}$ defines a natural transformation
\[
T^k: \textup{Forg}^k\circ F_{ir}^k\rightarrow \textup{Forg}^k\circ F_{dr}^k.
\]
We call $T^{k}$ the propagation transformation.
\begin{prop}
$T^k$ defines a natural transformation $T^k: F_{ir}^k\rightarrow F_{dr}^k$.
\end{prop}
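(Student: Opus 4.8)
The plan is to upgrade the already-established natural transformation $T^k\colon \textup{Forg}^k\circ F_{ir}^k\rightarrow \textup{Forg}^k\circ F_{dr}^k$ of $W^a$-module maps to a natural transformation between the $A(k)$-valued functors. Since $A(k)$ is generated by $\mathbb{C}[W^a]$ and $S(V_{\mathbb{C}})$ (more precisely by $W^a$ together with $V$), and since each $T^{k,M}$ already intertwines the two $W^a$-actions by the defining lemma, the only thing left to verify is that $T^{k,M}$ intertwines the two actions of $V$, i.e.
\[
T^{k,M}\bigl(Q_{k,M}(v)f\bigr)=\pi_{k,M}(v)\bigl(T^{k,M}f\bigr)=\mathcal{D}_v^{k,M}\bigl(T^{k,M}f\bigr),\qquad v\in V,\ f\in C^\omega(V)\otimes_{\mathbb{C}}M,
\]
where $Q_{k,M}(v)=\partial_v\otimes\textup{Id}_M$. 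Naturality in $M$ is then automatic, because all the maps involved ($T^{k,M}$, the $W^a$-action, the differential and integral operators) are visibly functorial in $M$, being built from morphisms that do not see the module structure beyond the bare vector space and the reflection action.

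The key computation is to check the intertwining identity for $\mathcal{D}_v^{k,M}$ chamber by chamber. Fix $w\in W^a$; by the defining relation $(T^{k,M}f)_{w^{-1}C_+}=(w^{-1}\otimes w^{-1})\bigl(Q_{k,M}(w)f\bigr)$ rewritten in $C^\omega(V)\otimes_{\mathbb{C}}M$. On the one hand, using \eqref{Dunklperchamber},
\[
\bigl(\mathcal{D}_v^{k,M}T^{k,M}f\bigr)_{w^{-1}C_+}=\partial_v\bigl(T^{k,M}f\bigr)_{w^{-1}C_+}-\sum_{a\in R^{a,+}\cap w^{-1}R^{a,-}}k_a(Da)(v)\,(s_a\otimes s_a)\bigl(T^{k,M}f\bigr)_{s_aw^{-1}C_+}.
\]
On the other hand, the target value $\bigl(T^{k,M}(Q_{k,M}(v)f)\bigr)_{w^{-1}C_+}=(w^{-1}\otimes w^{-1})Q_{k,M}(w)Q_{k,M}(v)f=(w^{-1}\otimes w^{-1})Q_{k,M}(wt_v w^{-1}\cdot w)$ is handled by pushing the translation $t_v$ past $w$ using the $A(k)$-relations: in $A(k)$ one has $w\cdot v=((Dw)v)\cdot w-\sum_{a\in R^{a,+}\cap w^{-1}R^{a,-}}k_a(Da)(v)\,w s_a$ by \eqref{forintertwiner}. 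Applying $Q_{k,M}$ to this identity, acting on $f$, and then applying $(w^{-1}\otimes w^{-1})$, one reads off exactly the right-hand side above, once one uses $Q_{k,M}((Dw)v)=\partial_{(Dw)v}\otimes\textup{Id}_M$ together with $w^{-1}\partial_{(Dw)v}w=\partial_v$ on $C^\omega(V)$, and identifies $(w^{-1}\otimes w^{-1})Q_{k,M}(ws_a)f$ with $(s_a^{-1}\otimes s_a^{-1})\bigl[(s_aw^{-1}\otimes s_aw^{-1})Q_{k,M}(s_aw)f\bigr]=(s_a\otimes s_a)(T^{k,M}f)_{s_aw^{-1}C_+}$. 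Matching the two expressions chamber-by-chamber gives the claim.

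The main obstacle—really the only subtlety—is the bookkeeping in this chamber-wise comparison: one must be careful that the reflection set $R^{a,+}\cap w^{-1}R^{a,-}$ appearing in the Dunkl operator on the chamber $w^{-1}C_+$ (cf. the discussion of $\mathcal{I}_a$ preceding \eqref{Dunkl}) is precisely the one appearing in the commutation relation \eqref{forintertwiner} for the same $w$, so that the two sums have matching index sets term by term, and that $s_aw$ is the unique Weyl group element whose fundamental-chamber translate is $s_aw^{-1}C_+$. Once this dictionary between $A(k)$-relations and the geometry of chambers is set up carefully (it mirrors the proof of \cite[Thm.~5.2]{EOS} and the discussion around \eqref{Dunklperchamber}), the verification is a direct substitution. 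Since we have checked that $T^{k,M}$ intertwines the generators $W^a$ and $V$ of $A(k)$, it intertwines all of $A(k)$, so each $T^{k,M}$ is a morphism in $\textup{Mod}_{A(k)}$ and $T^k$ is the asserted natural transformation $F_{ir}^k\rightarrow F_{dr}^k$.
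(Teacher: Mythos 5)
Your proposal is correct and takes essentially the same route as the paper: the paper likewise reduces the claim to the single intertwining identity $T^{k,M}\circ\partial_v=\mathcal{D}_v^{k,M}\circ T^{k,M}$ (the $W^a$-equivariance and naturality being already built into the definition of $T^{k,M}$) and invokes \eqref{forintertwiner} for the direct computation, which you carry out correctly chamber by chamber against \eqref{Dunklperchamber}. The only blemish is notational: the expression $Q_{k,M}(wt_vw^{-1}\cdot w)$ should simply read $Q_{k,M}(w\cdot v)$ with $v\in V\subset S(V_{\mathbb{C}})\subset A(k)$ rather than a translation element, but your subsequent use of \eqref{forintertwiner} treats it correctly.
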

\begin{proof}
It suffices to show that 
\[T^{k,M}\circ\partial_v=\mathcal{D}_v^{k,M}\circ T^{k,M}
\]
for all $v\in V$. This follows from \eqref{forintertwiner} by
a direct computation.
\end{proof}

For $M$ a $W^a$-module the
corresponding morphism $T^{k,M}: F_{ir}^k(M)\rightarrow F_{dr}^k(M)$ 
in $\textup{Mod}_{A(k)}$
is a monomorphism. We denote its image by $C^{\omega,k}(V;M)$ and
write $F_{dr}^{\omega,k}: \textup{Mod}_{\mathbb{C}[W^a]}\rightarrow
\textup{Mod}_{A(k)}$ for the associated functor. On objects it is given 
by $F_{dr}^{\omega,k}(M)=C^{\omega,k}(V;M)$. 
The propagation transformation $T^k$ 
defines an equivalence $T^k: F_{ir}^k\overset{\sim}{\longrightarrow} 
F_{dr}^{\omega,k}$ between the two functors 
$F_{ir}^k, F_{dr}^{\omega,k}: \textup{Mod}_{\mathbb{C}[W^a]}\rightarrow
\textup{Mod}_{A(k)}$.

We now characterize
$C^{\omega,k}(V;M)$ as subspace 
of $B^\omega(V;M)$ in terms of derivative jump conditions
over affine root hyperplanes.

For a chamber $C=wC_+$ ($w\in W^a$)
we write $R_C^a=\{wa \, | \, a\in F^a\}$. The 
$V_b\cap \overline{C}$ ($b\in R_C^a$) are the walls of the chamber
$C$. In particular, if $b\in R_C^a$ then 
$C$ and $s_bC$ are adjacent chambers 
with common wall $V_b\cap\overline{C}$, and $Db^\vee$ is 
a vector normal to $V_b\cap\overline{C}$ which points towards $C$.

\begin{prop}\label{jumpprop}
Let $M$ be a $W^a$-module. Then $C^{\omega,k}(V;M)$
is the space of functions $f\in B^\omega(V;M)$ satisfying
$\forall\, C\in\mathcal{C}$, $\forall\, b\in R^a_C$, $\forall\, v\in V_b\cap
\overline{C}$,
\begin{equation}\label{jumpconditions}
\bigl(p(\partial)f_{C}\bigr)(v)-\bigl(p(\partial)f_{s_bC}\bigr)(v)=
k_bs_b\lbrack\bigl((\Delta_{Db}p)(\partial)f_C\bigr)(v)\rbrack
\end{equation}
for all $p\in S(V_{\mathbb{C}})$. Here we use the $s_b$-action 
on $M$ in the right hand side of \eqref{jumpconditions}.
\end{prop}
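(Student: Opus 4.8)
The plan is to characterize the image $C^{\omega,k}(V;M)=\mathrm{im}(T^{k,M})$ by unravelling the recursive definition of $T^{k,M}$ and reading off exactly when neighbouring chamber-components glue together in the required way. First I would fix a chamber $C\in\mathcal{C}$, a wall root $b\in R^a_C$, and write $C=wC_+$ with $w\in W^a$ chosen minimal, so that $s_bC=s_bwC_+$ and $b=w(a)$ for some $a\in F^a$. Using $(T^{k,M}f)_{w^{-1}C_+}=(w^{-1}\otimes w^{-1})(Q_{k,M}(w)f)$, I would express both $f_C$ and $f_{s_bC}$ in terms of a single element $g:=Q_{k,M}(w)f\in C^\omega(V)\otimes M$: namely $f_C=(w\otimes w)g$ and $f_{s_bC}=(s_bw\otimes s_bw)g=(s_b\otimes s_b)(w\otimes w)\,\big((w^{-1}s_bw)\otimes(w^{-1}s_bw)\big)g$. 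The cleaner route is to apply $Q_{k,M}(s_a)$ to $g$: by Theorem \ref{intrefl}, $Q_{k,M}(s_a)g=(s_a\otimes s_a)g-k_a\bigl(I(a)\otimes\mathrm{Id}_M\bigr)g$, and this is precisely $f$ re-expressed relative to the adjacent chamber. Translating back by $w$ then gives a clean relation between $f_C$ and $f_{s_bC}$ in which the difference is governed by the integral operator $I(b)$.

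The key computational step is then to pair this relation against the bilinear form $(\cdot,\cdot): S(V_{\mathbb{C}})\times C^\omega(V)\to\mathbb{C}$ of Lemma \ref{G}, evaluated after differentiation at an arbitrary point $v\in V_b\cap\overline{C}$. Concretely, for $p\in S(V_{\mathbb{C}})$ I want $\bigl(p(\partial)f_C\bigr)(v)-\bigl(p(\partial)f_{s_bC}\bigr)(v)$, and by the above this equals (up to the $W^a$-action on $M$) a multiple of $\bigl(p(\partial)\,k_b\,I(b)h\bigr)(v)$ for an appropriate $h$. Here I use two facts: the $W^a$-equivariance $wI(a)w^{-1}=I(w(a))$ from \eqref{invariance}, so that $I(a)$ transported by $w$ becomes $I(b)$; and the crucial observation that $I(b)h$ restricted to the wall $V_b$ vanishes, since $(I(b)h)(v)=\int_0^{b(v)}h(v-tDb^\vee)\,dt=0$ when $b(v)=0$. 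Therefore $\bigl(I(b)h\bigr)(v)=0$ on the wall, and only the \emph{first} derivative in the normal direction $Db^\vee$ survives: $\bigl(\partial_{Db^\vee}I(b)h\bigr)(v)=h(v)$ on $V_b$. Iterating, for general $p$ one gets $\bigl(p(\partial)I(b)h\bigr)(v)=\bigl((\Delta_{Db}p)(\partial)h\bigr)(v)$ on the wall — this is exactly the adjointness of Lemma \ref{G} localized to the hyperplane — which produces the divided-difference term $\Delta_{Db}p$ on the right-hand side of \eqref{jumpconditions}, together with the factor $k_b$ and the $s_b$-action on the $M$-component coming from transporting $Q_{k,M}(s_a)$'s $s_a\otimes s_a$ leg by $w$.

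For the converse inclusion I would argue that a function $f\in B^\omega(V;M)$ satisfying all the jump conditions \eqref{jumpconditions} is uniquely reconstructed, chamber by chamber, from its restriction $f_{C_+}$: indeed on each wall the conditions determine all derivatives of $f_{s_bC}$ along $Db^\vee$ in terms of those of $f_C$, hence determine the real-analytic germ $f_{s_bC}$ from $f_C$ by Taylor expansion; since $V_{reg}$ is connected through walls and $W\ltimes Q^\vee$ acts transitively on $\mathcal{C}$, this propagation is consistent and globally defined, and by construction it coincides with $T^{k,M}(f_{C_+})$. The main obstacle I anticipate is bookkeeping the $W^a$-action on the $M$-leg correctly through the transport by $w$: one must verify that the $s_b$ appearing in \eqref{jumpconditions} is genuinely $s_b$ acting on $M$ (not $s_a$, nor some conjugate), which requires carefully tracking how the $s_a\otimes s_a$ term of $Q_{k,M}(s_a)$ conjugates under $(w\otimes w)$, using $ws_aw^{-1}=s_{w(a)}=s_b$. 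The real-analyticity and well-definedness of the propagated germs is routine given the explicit integral formulas, so the equivariance bookkeeping is the only genuinely delicate point.
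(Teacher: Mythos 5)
Your proposal is correct and follows essentially the same route as the paper's (the paper itself only sketches the argument, deferring to \cite[Thm.~5.3]{EOS}): the forward direction comes from transporting the explicit integral-reflection form of $Q_{k,M}(s_a)$ by $w$ and using the adjointness of $I(\alpha)$ and $-\Delta_\alpha$ (Lemma \ref{G}) localized to the wall, while the converse uses that continuity plus the normal-derivative jump conditions of all orders determine the real-analytic germs $f_C$ from $f_{C_+}$, forcing $f=T^{k,M}(f_{C_+})$. Just beware two small slips in your wall-localized identities --- one has $(\partial_{Db^\vee}I(b)h)(v)=2h(v)$ (not $h(v)$) for $v\in V_b$, and the localized adjointness reads $\bigl(p(\partial)I(b)h\bigr)(v)=-\bigl((\Delta_{Db}p)(\partial)h\bigr)(v)$ --- which, combined with the $-k_aI(a)$ term in $Q_{k,M}(s_a)$ and the conjugation $\Delta_{Db}\circ s_{Db}=-\Delta_{Db}$ arising from the $s_b$ in the first tensor leg, yield exactly the constants in \eqref{jumpconditions}.
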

\begin{proof}
The proof is essentially the same as the proof of \cite[Thm. 5.3]{EOS},
which deals with the case that  $Y=Q^\vee$, $k_0=k_\theta$
and $M=\mathbb{I}$. We thus only give
a sketch of the proof in the present set-up.

We call a vector $v\in V$ subregular if it lies
on exactly one affine root hyperplane $V_a$ ($a\in R^{a,+}$). 
The $Q_{k,M}$-image of the cross relations \eqref{DemazureLusztig}
imply that $f\in C^{\omega,k}(V;M)$ satisfies the jump conditions
\eqref{jumpconditions} for subregular $v\in V_b\cap\overline{C}$.
By continuity it then holds for all $v\in V_b\cap\overline{C}$.

Suppose on the other hand 
that $f\in B^{\omega}(V;M)$ satisfies the jump conditions
\eqref{jumpconditions} for all $p\in S(V_{\mathbb{C}})$.
For $p=1$ the jump conditions \eqref{jumpconditions}
are an elaborate way of saying that $f$ is a continuous 
$M$-valued function on $V$ (cf. Remark \ref{multi}{\bf (ii)}). 
The jump conditions \eqref{jumpconditions} also imply that $f$
satisfies normal derivative jump conditions of all orders
over the walls (see Remark \ref{jump}). 
Since the $f_C$'s are real analytic it follows that 
$f$ is uniquely determined by its restriction to the fundamental chamber
$C_+$.
Hence $T^{k,M}(f_{C_+})=f\in C^{\omega,k}(V;M)$.
\end{proof}
\begin{cor}
If $f\in C^{\omega,k}(V;M)$ then $\forall\, C\in \mathcal{C}$,
$\forall\, b\in R_C^a$, $\forall\, v\in V_b\cap\overline{C}$,
\[(p(\partial)f_C)(v)=(p(\partial)f_{s_bC})(v)
\]
for all $p=\sum_{m\geq 0}(Db^\vee)^{2m}p_m\in S(V_{\mathbb{C}})$ with
$p_m\in S(V_{Db})_{\mathbb{C}}$.
\end{cor}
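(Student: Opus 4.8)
The plan is to derive the corollary directly from the jump conditions \eqref{jumpconditions} of Proposition \ref{jumpprop} by choosing $p$ of the special form $p=\sum_{m\geq 0}(Db^\vee)^{2m}p_m$ with $p_m\in S(V_{Db})_{\mathbb{C}}$ and showing that in this case the right-hand side of \eqref{jumpconditions} vanishes at every $v\in V_b\cap\overline{C}$. Concretely, I would fix a chamber $C$, a wall root $b\in R_C^a$ and a point $v\in V_b\cap\overline{C}$, so that $Db(v^{\prime})=0$ for $v^{\prime}$ in the linear hyperplane $V_{Db}$ direction of the wall.

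First I would observe how the divided difference operator $\Delta_{Db}$ acts on the summands of $p$. Since $\Delta_{Db}$ is an $S(V_{Db})_{\mathbb{C}}$-linear derivation-like operator and $s_{Db}$ fixes $S(V_{Db})_{\mathbb{C}}$ pointwise while sending $Db^\vee\mapsto -Db^\vee$, one computes
\[
\Delta_{Db}\bigl((Db^\vee)^{2m}p_m\bigr)=\frac{s_{Db}\bigl((Db^\vee)^{2m}p_m\bigr)-(Db^\vee)^{2m}p_m}{(Db)^\vee}
=\frac{(Db^\vee)^{2m}p_m-(Db^\vee)^{2m}p_m}{(Db)^\vee}=0,
\]
because $(-Db^\vee)^{2m}=(Db^\vee)^{2m}$. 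Hence $\Delta_{Db}(p)=0$ for every $p$ of the stated form, so the right-hand side of \eqref{jumpconditions} is identically zero for such $p$. This is the main point; the rest is bookkeeping.

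Therefore the jump condition \eqref{jumpconditions} applied to such a $p$ reads $\bigl(p(\partial)f_C\bigr)(v)=\bigl(p(\partial)f_{s_bC}\bigr)(v)$ for all $v\in V_b\cap\overline{C}$, which is exactly the assertion of the corollary. Since $f\in C^{\omega,k}(V;M)$, Proposition \ref{jumpprop} guarantees that \eqref{jumpconditions} holds for \emph{all} $p\in S(V_{\mathbb{C}})$ and all admissible $C,b,v$, so in particular for the special $p$'s, and no further regularity or continuity argument is needed.

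I do not anticipate a genuine obstacle here; the only care required is the algebraic identity $\Delta_{Db}(p)=0$ for $p\in\sum_m (Db^\vee)^{2m}S(V_{Db})_{\mathbb{C}}$, which rests on the two facts that $s_{Db}$ acts trivially on the subalgebra $S(V_{Db})_{\mathbb{C}}$ generated by linear functionals vanishing on $Db^\vee$ and that it sends $Db^\vee$ to its negative, so even powers of $Db^\vee$ are $s_{Db}$-invariant. One should also note that the decomposition $S(V_{\mathbb{C}})=\bigoplus_{m\geq 0}(Db^\vee)^{m}S(V_{Db})_{\mathbb{C}}$ is the eigenspace decomposition of $S(V_{\mathbb{C}})$ under $s_{Db}$ (even $m$ giving the $+1$-eigenspace, odd $m$ the $-1$-eigenspace), which makes the stated form of $p$ precisely the $s_{Db}$-invariant polynomials; this reconciles the corollary with the fact that $\Delta_{Db}$ annihilates $s_{Db}$-invariants.
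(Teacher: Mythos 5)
Your proof is correct and is exactly the argument the paper intends (the corollary is stated without proof precisely because it follows by substituting an $s_{Db}$-invariant $p$ into \eqref{jumpconditions} and noting $\Delta_{Db}(p)=0$). The key identity $s_{Db}\bigl((Db^\vee)^{2m}p_m\bigr)=(Db^\vee)^{2m}p_m$ for $p_m\in S(V_{Db})_{\mathbb{C}}$, which makes the right-hand side of the jump condition vanish, is verified correctly.
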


\begin{rema}\label{jump}
{\bf (i)} In the right hand side of the jump conditions
\eqref{jumpconditions} we may replace
$f_C$ by $f_{s_bC}$ (or by $\frac{1}{2}(f_C+f_{s_bC})$).\\
{\bf (ii)} Take $C\in \mathcal{C}$, $b\in R_C^a$ and 
$v\in V_b\cap\overline{C}$. Recall that 
the vector $Db^\vee$ is normal to the wall $V_b\cap \overline{C}$ 
of $C$ and points towards the chamber $C$.
Then \eqref{jumpconditions} for $p=(Db^\vee)^r$ ($r\in\mathbb{N}$)
become the following $r$th order normal derivative
jump condition of $f$ over the wall $V_b\cap \overline{C}$ at $v$,
\begin{equation}\label{jumpconditionshigherorder}
(\partial_{Db^\vee}^rf_C)(v)-(\partial_{Db^\vee}^rf_{s_bC})(v)=
((-1)^r-1)k_bs_b\lbrack (\partial_{Db^\vee}^{r-1}f_C)(v)\rbrack.
\end{equation}
For $r=1$ it reduces to
\begin{equation}\label{jumpconditionsdegree1}
(\partial_{Db^\vee}f_C)(v)-(\partial_{Db^\vee}f_{s_bC})(v)=
-2k_bs_b\bigl(f_C(v)\bigr).
\end{equation}
Continuity and the higher order normal derivative jump conditions
\eqref{jumpconditionshigherorder} 
suffice to characterize $C^\omega(V;M)$ as subspace of $B^\omega(V;M)$,
see the proof of Proposition \ref{jumpprop}.
\end{rema}

\subsection{Relation to quantum many body problems}

We fix in this subsection a left $W^a$-module $M$. By
Corollary \ref{FunctorG}{\bf (ii)} the constant coefficient
differential operators
$\pi_{k,M}(p)=p(\partial)$ ($p\in S(V_{\mathbb{C}})^W$) act on $C^{\omega,k}(V;M)$.
They can be interpreted as 
quantum conserved integrals of a quantum system with delta function potentials 
as follows.

Write $C(V;M)$ for the space
of continuous $M$-valued functions on $V$: it consists of functions
$f: V\rightarrow M$ such that for all $v\in V$ there exists an open
neighborhood $U$ of $v$ in $V$ such that $f|_U\in C(U)\otimes_{\mathbb{C}}M$.
Note that $C^{\omega,k}(V;M)\subset C(V;M)$. 
Denote $C_c^\infty(V)$ for the smooth, compactly supported, complex
valued functions on $V$. 
Write $dv$ for the Euclidean volume measure
on $V$. We also write $dv$ 
for the induced volume measure on the affine root hyperplanes $V_b$
($b\in R^{a,+}$). We have a linear embedding
$\iota: C(V;M)\rightarrow \textup{Hom}_{\mathbb{C}}\bigl(C_c^\infty(V);M)$
defined by
\begin{equation}\label{weak}
(\iota f)(\phi):=\int_Vf(v)\phi(v)dv,\qquad f\in C(V;M),\,\, 
\phi\in C_c^{\infty}(V).
\end{equation}
This allows us to view $\pi_{k,M}(p)f$ 
for $f\in CB^\omega(V;M):=C(V;M)\cap B^\omega(V;M)$ and for 
$p\in S(V_{\mathbb{C}})$ as the $M$-valued distribution 
$\iota(\pi_{k,M}(p)f)$. We occasionally omit $\iota$ if it is
clear that the weak interpretation is meant.

Denote $\|\cdot\|$ for the norm on the Euclidean space $V$.
We write
\begin{equation}\label{Hamiltonian}
\mathcal{H}_k^M=-\Delta-\sum_{b\in R^{a}}\frac{k_b}{\|Db^\vee\|}
\delta(b(\cdot))s_b
\end{equation}
for the linear map $\mathcal{H}_k^M: C(V;M)\rightarrow 
\textup{Hom}_{\mathbb{C}}\bigl(C_c^\infty(V);M)$ defined by
\[
(\mathcal{H}_k^Mf)(\phi)=-\int_Vf(v)(\Delta\phi)(v)dv
-\sum_{b\in R^a}\frac{k_b}{\|Db^\vee\|}
\int_{V_b}s_b(f(v))\phi(v)dv
\]
for $f\in C(V;M)$ and $\phi\in C^{\infty}_c(V)$, where
$s_b$ only acts on $M$. 

We also write $\|\cdot\|$ for the norm of the Euclidean space $V^*$
and interpret the $W$-invariant polynomial $\|\cdot\|^2$ on $V^*$
as element in $S(V_{\mathbb{C}})^W$ in the usual way. 
Observe that $\|\partial\|^2$ is the Laplacean $\Delta$ on $V$.
Note furthermore that $\pi_{k,M}(\|\cdot\|^2)=\Delta$ by Corollary
\ref{FunctorG}{\bf (ii)}. The following result directly implies
a reformulation of the spectral problem for $\mathcal{H}_k^M$
as a boundary value problem (cf. Subsection \ref{SpTh}).
\begin{prop}\label{BVP}
Let $M$ be a left $W^a$-module and $f\in CB^\omega(V;M)$.
Then
\[-\iota\bigl(\Delta f\bigr)=\mathcal{H}_k^Mf
\]
if and only if $f$ satisfies the derivative jump conditions
\eqref{jumpconditions} for $p\in S(V_{\mathbb{C}})$ of degree one.
\end{prop}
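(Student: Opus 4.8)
The plan is to compute the distribution $\mathcal{H}_k^M f$ directly from its definition \eqref{Hamiltonian} and match it term by term against $-\iota(\Delta f)$, thereby isolating exactly the conditions on $f$ that make the two coincide. First I would handle the Laplacean term: for $f\in CB^\omega(V;M)$ which is continuous on all of $V$ and real-analytic on each chamber $C\in\mathcal{C}$, the classical computation with integration by parts (Green's identity, applied chamber by chamber) shows that the distribution $\phi\mapsto-\int_V f(v)(\Delta\phi)(v)\,dv$ equals $-\iota(\Delta f)$ plus a sum of boundary terms supported on the affine root hyperplanes. Concretely, since $f$ is already continuous the zeroth-order boundary contributions cancel across each wall, and what survives is, for each wall $V_b\cap\overline{C}$ of each chamber $C$ (equivalently each pair of adjacent chambers $C, s_bC$), an integral over that wall of the jump in the normal derivative $\partial_{Db^\vee}f_C-\partial_{Db^\vee}f_{s_bC}$ paired against $\phi$. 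Keeping careful track of the normalization: the Euclidean normal to $V_b$ is $Db^\vee/\|Db^\vee\|$, so the normal-derivative jump relative to the unit normal picks up a factor $1/\|Db^\vee\|$ relative to the jump in $\partial_{Db^\vee}$, and each unordered pair of adjacent chambers is visited once from each side.

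Next I would compute the interaction term in \eqref{Hamiltonian}: the sum $\sum_{b\in R^a}\frac{k_b}{\|Db^\vee\|}\int_{V_b}s_b(f(v))\phi(v)\,dv$. Using the $W^a$-invariance of $k$, grouping affine roots $b$ into $\pm$ pairs and reindexing the hyperplane sum as a sum over walls of chambers, this term becomes (up to the same $1/\|Db^\vee\|$ normalization and combinatorial bookkeeping) a sum of wall integrals of $2k_b\,s_b(f_C(v))$ against $\phi$, where on each wall one may use $f_C$ or $f_{s_bC}$ interchangeably since $f$ is continuous there. Equating $-\iota(\Delta f)=\mathcal{H}_k^M f$ as elements of $\mathrm{Hom}_{\mathbb{C}}(C_c^\infty(V);M)$ then reduces, because $C_c^\infty(V)$ separates distributions, to the vanishing of the total coefficient distribution on each wall: for every $C\in\mathcal{C}$, every $b\in R^a_C$, and every $v\in V_b\cap\overline{C}$,
\[
\bigl(\partial_{Db^\vee}f_C\bigr)(v)-\bigl(\partial_{Db^\vee}f_{s_bC}\bigr)(v)
=-2k_b\,s_b\bigl(f_C(v)\bigr).
\]
This is precisely the degree-one jump condition \eqref{jumpconditionsdegree1}, which by Remark \ref{jump}{\bf (ii)} is the same as \eqref{jumpconditions} for $p\in S(V_{\mathbb{C}})$ of degree one (the degree-zero case being subsumed in the standing continuity assumption $f\in C(V;M)$). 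Conversely, if $f$ satisfies \eqref{jumpconditionsdegree1} on every wall, the above identification of $-\int_V f\,\Delta\phi$ shows that the boundary terms produced by integration by parts are exactly matched by the delta-function terms in $\mathcal{H}_k^M f$, giving $-\iota(\Delta f)=\mathcal{H}_k^M f$.

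The main obstacle I anticipate is bookkeeping rather than conceptual: getting the normalization constants and the combinatorics of the hyperplane-versus-wall reindexing exactly right, so that the factor $2k_b/\|Db^\vee\|$ from the interaction term matches the factor coming from Green's identity on the unit-normal derivative, and so that each wall is counted with the correct multiplicity. One has to be careful that the sum over $b\in R^a$ in \eqref{Hamiltonian} ranges over \emph{all} affine roots (not just positive ones), which is what produces the symmetric $s_b$-action and the factor of $2$; and one should note $s_b=s_{-b}$ while $\delta(b(\cdot))=\delta(-b(\cdot))$, so positive and negative affine roots on the same hyperplane contribute equally. Since the jump conditions are local and $f$ is real-analytic on chambers, it suffices to verify the equality of distributions on a neighborhood of each subregular point (a point lying on exactly one $V_a$), which reduces the whole matter to a one-dimensional computation transverse to a single wall; this is the same device used in the proof of Proposition \ref{jumpprop} and in \cite[Thm.~5.3]{EOS}.
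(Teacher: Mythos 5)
Your proposal is correct and follows essentially the same route as the paper: apply Green's identity chamber by chamber (the zeroth-order boundary terms cancelling by continuity of $f$), and match the surviving unit-normal-derivative jump terms on each wall against the delta-function terms of $\mathcal{H}_k^M$, which is exactly the degree-one jump condition \eqref{jumpconditionsdegree1}. Your extra care about the factor $2k_b/\|Db^\vee\|$ coming from the sum over all of $R^a$ (with $s_b=s_{-b}$, $V_b=V_{-b}$) is the same bookkeeping the paper performs implicitly.
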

\begin{proof}
Let $\phi\in C^{\infty}_c(V)$ and $f\in CB^\omega(V;M)$.
Then
\[\iota\bigl(\Delta f\bigr)(\phi)=
\sum_{C\in\mathcal{C}}\int_C(\Delta f)(v)\phi(v)dv
\]
(only finitely many terms contribute to the sum). Green's identity 
allows us to rewrite the right hand side as
\[\int_Vf(v)(\Delta\phi)(v)dv+
\sum_L\int_L\bigl((\partial_{n^L}f_{C_{n^L}})(v)-
(\partial_{n^L}f_{C^\prime_{n^L}})(v)\bigr)\phi(v)dv.
\]
Here the sum runs over all the walls $L$,
$n^L\in V$ is a unit normal vector to $L$ and $C_{n^L}$
(respectively $C_{n^L}^\prime$) is the chamber with wall $L$
such that $n^L$ is pointing away (respectively towards)
the chamber. 

On the other hand, if $f\in CB^\omega(V;M)$ satisfies the jump conditions
\eqref{jumpconditions} for $p\in S(V_{\mathbb{C}})$ of degree one
iff, for a given wall $L=V_b\cap\overline{C}$
($C\in\mathcal{C}$, $b\in R_C^a$),
\[(\partial_{n^L}f_{C_{n^L}})(v)-
(\partial_{n^L}f_{C^\prime_{n^L}})(v)=
\frac{2k_b}{\|Db^\vee\|}s_b(f(v))
\]
for $v\in L=V_b\cap\overline{C}$ (see \eqref{jumpconditionsdegree1}). 
The result follows now directly.
\end{proof}
It is natural to consider $\mathcal{H}_k^{M}$ as the quantum Hamiltonian
of a quantum physical system. 
Particular cases of these quantum systems have been extensively studied,
see, e.g., \cite{LL,YY,McG,Y,Ga,GS,G,GY,MW,HO,EOS}, to name just a few.
Most studies in the literature deal with the root system
$R$ of type $A$, in which case the quantum system 
describes one dimensional 
quantum spin-particles with pair-wise delta 
function interactions. The assumption $k_a<0$
(respectively $k_a>0$) then corresponds to
repulsive (respectively attractive) delta function interactions.
For root systems $R$ of classical type the quantum system 
relates to one dimensional quantum
spin-particles with pair-wise delta function interactions and 
boundary reflection terms.

\section{Spectral theory of the quantum many body problem}
\label{Spectralsection}
Let $N$ be a left $A(k)$-module and $\lambda\in V_{\mathbb{C}}^*$.
Recall that $S(V_{\mathbb{C}})^W$ is part of the center of $A(k)$.
We denote $N_\lambda\subseteq N$ for the $A(k)$-submodule
\[
N_\lambda=\{ n\in N \,\, | \,\, p\cdot n=p(\lambda)n\quad \forall\,
p\in S(V_{\mathbb{C}})^W\}.
\]
We call $N_\lambda$ the $A(k)$-submodule of $N$ with central character
$\lambda$. In this section we study the
$A(k)$-submodule $F_{ir}^k(M)_\lambda\simeq F_{dr}^{\omega,k}(M)_\lambda$.


\subsection{The spectral problem}\label{SpTh}

For $\lambda\in V_{\mathbb{C}}^*$ we write 
\begin{equation}\label{Espace}
E(\lambda)=\{f\in C^\omega(V) \,\, | \,\, p(\partial)f=p(\lambda)f
\quad \forall\, p\in S(V_{\mathbb{C}})^W \}.
\end{equation}
Viewed as $W$-module with the natural $W$-action (cf. \eqref{regularaction}), 
$E(\lambda)$ is isomorphic to
the regular $W$-representation, see \cite{St}. If 
$\lambda(\alpha^\vee)\not=0$ for all $\alpha\in R$ then
$E(\lambda)=\bigoplus_{w\in W}\mathbb{C}e^{w\lambda}$
with $e^\lambda$ the complex plane wave $v\mapsto e^{\lambda(v)}$.

We fix a left $W^a$-module $M$. The subalgebra $S(V_{\mathbb{C}})\subset A(k)$
acts by constant coefficient differential operators on
the $A(k)$-module $F_{ir}^k(M)$, hence
\[
F_{ir}^k(M)_\lambda=E(\lambda)\otimes_{\mathbb{C}}M
\]
as vector spaces.
Consider now the $A(k)$-modules $F_{dr}^{\omega,k}(M)_\lambda\subset
F_{dr}^k(M)_\lambda$. By Corollary \ref{FunctorG}{\bf (ii)},
\[
F_{dr}^k(M)_\lambda=\{f\in B^\omega(V;M) \,\, | \,\, f_C\in 
E(\lambda)\otimes_{\mathbb{C}}M\quad \forall\, C\in\mathcal{C} \}
\]
as vector spaces.
In subsection \ref{propag} we characterized 
$F_{dr}^{\omega,k}(M)$ as vector subspace
of $F_{dr}^k(M)$ in terms of derivative
jump conditions of arbitrary order over the affine root hyperplanes.
Restricted to the submodules of central character $\lambda$,
the derivative jump conditions of order $\leq 1$ suffice:
\begin{prop}\label{elliptic}
Suppose $f\in F_{dr}^k(M)_\lambda$ satisfies the derivative jump conditions
\eqref{jumpconditions} for $p\in S(V_{\mathbb{C}})$ of degree $\leq 1$. Then
$f\in F_{dr}^{\omega,k}(M)_\lambda$.
\end{prop}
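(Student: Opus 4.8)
The plan is to bootstrap from the degree-$\leq 1$ jump conditions to all orders, using that $f$ has central character $\lambda$. By Proposition \ref{jumpprop}, membership of $f$ in $F_{dr}^{\omega,k}(M)$ is equivalent to the validity of \eqref{jumpconditions} for \emph{all} $p\in S(V_{\mathbb{C}})$; since $f\in F_{dr}^k(M)_\lambda$ automatically, the conclusion $f\in F_{dr}^{\omega,k}(M)_\lambda$ will follow once I upgrade the degree-$\leq 1$ hypothesis to arbitrary degree. Fix a chamber $C\in\mathcal{C}$, a wall root $b\in R_C^a$, and a point $v\in V_b\cap\overline{C}$; I may assume $v$ subregular, the general case following by continuity as in the proof of Proposition \ref{jumpprop}. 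Set $\beta=Db^\vee\in V$ and decompose $S(V_{\mathbb{C}})=\bigoplus_{m\geq 0}\beta^m\,S(V_{Db})_{\mathbb{C}}$, writing a general $p$ as $p=\sum_m\beta^m p_m$ with $p_m\in S((V_{Db})_{\mathbb{C}})$. Because the normal direction $\beta$ and the wall directions commute as differential operators and $p_m(\partial)$ differentiates only along the wall, it suffices to treat $p=\beta^r q$ with $q\in S((V_{Db})_{\mathbb{C}})$ and to prove the jump condition for such $p$ by induction on $r$; the cases $r=0,1$ are covered (for $r=0$ both sides vanish since $\Delta_\beta(q)=0$, and $r=1$ is the hypothesis, again using $\Delta_\beta(\beta q)$ reduces correctly).

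The inductive step is where the central character enters. Because $f_C,f_{s_bC}\in E(\lambda)\otimes_{\mathbb{C}}M$, each component satisfies $\|\partial\|^2 g=\lambda^2 g$, and more generally $p(\partial)g=p(\lambda)g$ for $W$-invariant $p$ — but more usefully, $E(\lambda)$ is finite-dimensional (isomorphic to the regular representation) so the operators $\partial_\beta$ restricted to it satisfy a polynomial relation; concretely, writing $\Delta=\partial_\beta^2/\langle\beta,\beta\rangle+(\text{wall Laplacean})$, one gets $\partial_\beta^2 g=\langle\beta,\beta\rangle(\lambda^2-\Delta_{V_{Db}})g$ on $E(\lambda)\otimes M$, so second normal derivatives reduce to zeroth ones modulo tangential differential operators. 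Applying this to $p=\beta^{r}q$: I write $\beta^r=\beta^{r-2}\cdot\beta^2$ and use the relation to replace $\beta^2(\partial)$ acting on $f_C$ (resp. $f_{s_bC}$) by $\langle\beta,\beta\rangle(\lambda^2-\|\cdot\|_{V_{Db}}^2(\partial))$, thereby expressing the degree-$r$ jump in terms of degree-$(r-2)$ jumps of $f_C,f_{s_bC}$ with tangential coefficients. Since tangential differential operators commute with the restriction-to-the-wall map and the tangential part of $s_b$ is trivial (recall $s_b$ fixes $V_b$ pointwise), these lower-degree jumps are handled by the induction hypothesis, and one checks the right-hand-side constant $k_b s_b\Delta_{Db}(p)$ transforms compatibly — this is a bookkeeping computation comparing $\Delta_\beta(\beta^r q)$ with $\Delta_\beta(\beta^{r-2}q)$, both equal to $((-1)^{\deg}-1)$-type expressions as in \eqref{jumpconditionshigherorder}.

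Alternatively — and this may be cleaner — I would argue directly via Proposition \ref{jumpprop}'s proof: there it is shown that continuity together with all higher-order \emph{normal} derivative jump conditions \eqref{jumpconditionshigherorder} characterize $C^{\omega,k}(V;M)$. For $f$ of central character $\lambda$, the analyticity of $f_C$ plus the relation above shows $f_C$ is determined on a neighborhood of the wall by its value and first normal derivative there (a second-order ODE in the normal variable with analytic tangential-operator coefficients, or rather: $E(\lambda)$ being finite dimensional, the germ of $f_C$ along $V_b$ lies in a finite-dimensional space on which $\partial_\beta$ acts, and is pinned down by finitely many normal jets). So the degree-$\leq 1$ data propagates to all jets, giving all of \eqref{jumpconditionshigherorder}, hence $f\in C^{\omega,k}(V;M)\cap F_{dr}^k(M)_\lambda=F_{dr}^{\omega,k}(M)_\lambda$.

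\textbf{Main obstacle.} The subtle point is the bookkeeping of the constants and the $s_b$-action in the inductive step: one must verify that reducing $\partial_\beta^r$ modulo $\partial_\beta^2=\langle\beta,\beta\rangle(\lambda^2-\Delta_{V_{Db}})$ is \emph{consistent} with the prescribed right-hand side $k_b s_b[\Delta_{Db}(p)(\partial)f_C]$ of \eqref{jumpconditions} — i.e. that the divided-difference operator $\Delta_{Db}$ interacts correctly with this substitution. Since $\Delta_{Db}(\beta^r q)$ telescopes to $-2\beta^{r-1}q$ when $r$ is odd and $0$ when $r$ is even (as $s_\beta\beta=-\beta$), the parity matching in \eqref{jumpconditionshigherorder} is exactly what makes the induction close; checking this parity compatibility against the $E(\lambda)$-reduction is the one place real care is needed, everything else being routine.
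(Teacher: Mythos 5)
Your route is genuinely different from the paper's. The paper sets $\psi:=T^{k,M}(f_{C_+})-f$, observes that $\psi$ still satisfies the degree-$\leq 1$ jump conditions and vanishes on $C_+$, and then propagates the vanishing chamber by chamber: on $U=C\cup s_bC\cup L_b$ the scalar function $\chi(\psi|_U)$ ($\chi\in M^*$) is continuous with vanishing first derivatives along the wall, hence a weak eigenfunction of $\Delta$ on $U$; elliptic regularity makes it real analytic across the wall, and unique continuation from the chamber where it vanishes kills it on the adjacent one. You instead upgrade the degree-$\leq 1$ conditions to all of \eqref{jumpconditions} directly, trading two normal derivatives for tangential ones via $\Delta f_C=\|\lambda\|^2f_C$, and then quote Proposition \ref{jumpprop}. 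Both work: the paper's argument avoids all bookkeeping at the price of invoking elliptic regularity and unique continuation, while yours is purely algebraic and makes transparent why the central character plus first-order data determine everything. Your inductive step is sound: writing $\beta=Db^\vee$ and $\tilde q=\|\beta\|^2\bigl(\|\lambda\|^2-\|\cdot\|^2_{V_{Db}}\bigr)q$, one has $(\beta^rq)(\partial)f_C=(\beta^{r-2}\tilde q)(\partial)f_C$ on $E(\lambda)\otimes_{\mathbb{C}}M$, the parity factor $((-1)^r-1)$ is invariant under $r\mapsto r-2$, and the right-hand side reduces compatibly.

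There is one gap to close, in the base case. The hypothesis gives \eqref{jumpconditions} only for $p$ of \emph{total} degree $\leq 1$, whereas your induction on the normal degree $r$ needs the conditions for $p=q$ and $p=\beta q$ with $q\in S((V_{Db})_{\mathbb{C}})$ of \emph{arbitrary} degree (indeed $\tilde q$ has degree $\deg q+2$, so the tangential degree grows as you descend). So the parenthetical claim that ``$r=1$ is the hypothesis'' is not literally true. The fix is the tangential-differentiation device you already invoke elsewhere: the degree-$0$ and degree-$1$ identities hold on $V_b\cap\overline{C}$, which has nonempty interior in the hyperplane $V_b$; since $f_C$ and $f_{s_bC}$ are real analytic and $s_b$ acts only on the $M$-leg, you may apply any $q(\partial)$ with $q$ tangential to both sides and the identities persist on the wall. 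With that step inserted the induction closes and your proof is correct.
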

\begin{proof}
Choose $f\in F_{dr}^k(M)_\lambda$ satisfying \eqref{jumpconditions}
for $p\in S(V_{\mathbb{C}})$ of degree $\leq 1$. Then
\[
\psi:=T^{k,M}(f_{C_+})-f\in F_{dr}^k(M)_\lambda
\]
also satisfies \eqref{jumpconditions} for $p\in S(V_{\mathbb{C}})$ of 
degree $\leq 1$, and $\psi_{C_+}\equiv 0$. 
It suffices to show that $\psi=0$.
Recall that the neighboring chambers of a given chamber $C\in\mathcal{C}$
are given by $s_bC$ ($b\in R_C^a$). To prove that $\psi=0$
it suffices to show that $\psi_C\equiv 0$ implies 
$\psi_{s_bC}\equiv 0$ ($b\in R_C^a$).

Choose a chamber $C\in\mathcal{C}$ such that $\psi_{C}\equiv 0$
and choose an affine root $b\in R_C^a$. Write $L_b$ for the set of subregular
vectors $v$ on the wall $V_b\cap\overline{C}$. 
The subset $U=C\cup s_bC\cup L_b$ of $V$ is open and pathwise
connected.
Since $\psi$ satisfies \eqref{jumpconditions} for 
$p=1$ we have that $\psi|_U\in C(U)\otimes M$. We 
choose an arbitrary linear
functional $\chi\in M^*$ and write $\Psi=\chi(\psi|_U)\in C(U)$.
It suffices to show that $\Psi\equiv 0$. 

We have $\Psi|_{C\cup L_b}\equiv 0$ by assumption.
On the other hand, $\Psi|_{s_bC\cup L_b}$
is the restriction to $s_bC\cup L_b$ of some $g\in E(\lambda)$,
since $\psi\in F_{dr}^k(M)_\lambda$. 
In particular, $\Delta\Psi=\|\lambda\|^2\Psi$ on $C\cup s_bC$.
Since $\psi$ satisfies the jump conditions 
\eqref{jumpconditions} for $p$ of degree one, 
all directional derivatives of $\Psi$ at the $v\in L_b$ vanish.
Hence $\Psi$ is a weak eigenfunction of $\Delta$ on $U$ with eigenvalue
$\|\lambda\|^2$. This forces $\Psi$ to be smooth on $U$.
Consequently $g$ is zero in an open neighborhood of $v\in L_b$.
Hence $g\equiv 0$ and $\Psi\equiv 0$, as desired.
\end{proof}
Denote $CB^\omega(V;M)_\lambda=C(V;M)\cap F^k_{dr}(M)_\lambda$
for $\lambda\in V_{\mathbb{C}}^*$. Recall that $\|\lambda\|^2$
is the evaluation of the polynomial $\|\cdot\|^2\in S(V_{\mathbb{C}})^W$
at $\lambda$.
\begin{thm}\label{ellcor}
Let $f\in CB^\omega(V;M)_\lambda$.
Then $\mathcal{H}_k^M f=-\|\lambda\|^2f$ weakly if and only if
$f=T^{k,M}(f_{C_+})\in F_{dr}^{\omega,k}(M)_\lambda$.
\end{thm}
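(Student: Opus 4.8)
The statement essentially says that, for a function $f\in CB^\omega(V;M)_\lambda$, being a weak eigenfunction of the Hamiltonian $\mathcal{H}_k^M$ with eigenvalue $-\|\lambda\|^2$ is equivalent to lying in the subspace $F_{dr}^{\omega,k}(M)_\lambda$ cut out by the propagation transformation. The plan is to chain together Proposition~\ref{BVP} and Proposition~\ref{elliptic}. First I would unwind the definitions: $f\in CB^\omega(V;M)_\lambda$ means $f$ is continuous, lies in $B^\omega(V;M)$, and has $f_C\in E(\lambda)\otimes_{\mathbb{C}}M$ for every chamber $C$; in particular $f$ automatically satisfies $p(\partial)f=p(\lambda)f$ chamber-by-chamber for all $p\in S(V_{\mathbb{C}})^W$, so $\pi_{k,M}(\|\cdot\|^2)f=\Delta f=\|\lambda\|^2 f$ holds on each chamber (but not yet weakly, since the jump terms across walls are unaccounted for).

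For the forward direction, suppose $\mathcal{H}_k^M f=-\|\lambda\|^2 f$ weakly. Since $-\iota(\Delta f)$ equals $-\|\lambda\|^2 f$ weakly too (because $\Delta f = \|\lambda\|^2 f$ pointwise on $V_{reg}$, which is of full measure, so the distributions $\iota(\Delta f)$ and $\iota(\|\lambda\|^2 f)$ coincide), we get $-\iota(\Delta f)=\mathcal{H}_k^M f$. By Proposition~\ref{BVP} this is equivalent to $f$ satisfying the derivative jump conditions \eqref{jumpconditions} for $p\in S(V_{\mathbb{C}})$ of degree one; the degree-zero condition is just continuity of $f$, which we already have. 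Now Proposition~\ref{elliptic} applies directly to conclude $f\in F_{dr}^{\omega,k}(M)_\lambda$, and since $f\in C^{\omega,k}(V;M)=F_{dr}^{\omega,k}(M)$ means precisely $f=T^{k,M}(f_{C_+})$, we are done with this implication.

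For the reverse direction, suppose $f=T^{k,M}(f_{C_+})\in F_{dr}^{\omega,k}(M)_\lambda$. By Proposition~\ref{jumpprop}, $f$ satisfies the derivative jump conditions \eqref{jumpconditions} for all $p\in S(V_{\mathbb{C}})$, in particular for $p$ of degree one. Proposition~\ref{BVP} then gives $-\iota(\Delta f)=\mathcal{H}_k^M f$ as $M$-valued distributions. Finally, since $f\in F_{dr}^{\omega,k}(M)_\lambda\subset F_{dr}^k(M)_\lambda$ we have $\Delta f=\|\lambda\|^2 f$ on $V_{reg}$, hence $-\iota(\Delta f)=-\|\lambda\|^2 f$ as distributions, and therefore $\mathcal{H}_k^M f=-\|\lambda\|^2 f$ weakly.

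The only point requiring a little care — and the step I expect to be the main (mild) obstacle — is the passage from the chamber-wise eigenfunction property $\Delta f_C=\|\lambda\|^2 f_C$ to the weak statement $\iota(\Delta f)=\|\lambda\|^2\iota(f)$: one must note that $V\setminus V_{reg}$ is a finite union of affine hyperplanes, hence Lebesgue-null, so the piecewise-defined function $v\mapsto(\Delta f_C)(v)$ on $V_{reg}$ and the function $v\mapsto\|\lambda\|^2 f(v)$ agree almost everywhere and thus define the same element of $\textup{Hom}_{\mathbb{C}}(C_c^\infty(V);M)$ via $\iota$. Everything else is a direct bookkeeping application of the two cited propositions in sequence.
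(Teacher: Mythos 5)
Your proposal is correct and follows essentially the same route as the paper: both directions are obtained by chaining Proposition~\ref{BVP} with Proposition~\ref{elliptic} (forward) and Proposition~\ref{jumpprop} (converse). The only addition is that you make explicit the almost-everywhere argument identifying $\iota(\Delta f)$ with $\|\lambda\|^2\iota(f)$ for $f\in CB^\omega(V;M)_\lambda$, a point the paper leaves implicit when it writes $\mathcal{H}_k^Mf=-\Delta f=-\|\lambda\|^2f$ weakly.
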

\begin{proof}
Suppose $f\in CB^\omega(V;M)_\lambda$ satisfies 
$\mathcal{H}_k^Mf=-\|\lambda\|^2f$ weakly.
By Proposition \ref{BVP}, $f$ satisfies the jump 
conditions \eqref{jumpconditions}
for $p\in S(V_{\mathbb{C}})$ of degree $\leq 1$. Hence 
$f=T^{k,M}(f_{C_+})\in F_{dr}^{\omega,k}(M)$ by
Proposition \ref{elliptic}. For the converse, if
$f\in F_{dr}^{\omega,k}(M)_\lambda$ then 
\[\mathcal{H}_k^Mf=-\Delta f=-\|\lambda\|^2f
\]
weakly, where the first equality is due to Proposition \ref{BVP}.
\end{proof}
These results lead to the following concrete procedure to
produce solutions to the spectral problem of the quantum many
body problem (it is analogous to the usual Bethe ansatz
methods for the one dimensional quantum Bose gas with delta
function interactions and its root system
generalizations, see, e.g.,
\cite{LL,McG,Ga,GS,G} and the introduction):
pick $g\in E(\lambda)\otimes_{\mathbb{C}}M$ (which is a solution to the
spectral problem for the free Hamiltonians $p(\partial)$ 
($p\in S(V_{\mathbb{C}})^W$)). Then Theorem \ref{ellcor}
shows that $f=(f_C)_{C\in\mathcal{C}}=T^{k,M}g$ 
is the unique solution of 
\[\mathcal{H}_k^Mf=-\|\lambda\|^2f
\]
such that $f_{C_+}=g$ and such that 
the $f_C\in E(\lambda)\otimes_{\mathbb{C}}M$
satisfy the derivative 
jump conditions \eqref{jumpconditions}
for $b\in V_{b}\cap \overline{C}$ and for $p\in
S(V_{\mathbb{C}})$ of degree $\leq 1$. For $R$ of type $A$,
this gives Theorem \ref{introTHM} from the introduction.
Furthermore, $f$ satisfies the derivative jump conditions
\eqref{jumpconditions} for all 
$p\in S(V_{\mathbb{C}})$, and the explicit
formula for the propagation operator $T^{k,M}$ in terms of integral
reflection operators tells us how to explicitly construct the $f_C$:
\[ 
f_{w^{-1}C_+}=(w^{-1}\otimes w^{-1})Q_{k,M}(w)g,\qquad \forall\, w\in W^a.
\]

This is not the end of the story though! For $R\subset\mathbb{R}^n$ 
of type $A_{n-1}$ with $X=\bigoplus_{i=1}^n\mathbb{Z}\epsilon_i$
the quantum Hamiltonian $\mathcal{H}_k^{M}$ can be interpreted weakly
on $V/Y$. If we take furthermore $M=P^{\otimes n}$ for some complex vector space
$P$ with the $S_n$ action the permutations of the tensor entries and with the
trivial action of $Y$, $\mathcal{H}_k^{M}$ represents the quantum Hamiltonian of
a quantum system describing $n$ quantum spin-particles on the circle 
$S^1=\mathbb{R}/\mathbb{Z}$ with internal spins (the quantum spin state
space of the quantum particles is $P$) 
and with pair-wise delta function interactions. 
The associated spectral problem amounts to analyzing the subspace
$F_{dr}^{\omega,k}(M)_\lambda^Y$ of $Y$-translation invariant functions
in $F_{dr}^{\omega,k}(M)_\lambda$. In this case the class of functions 
$g\in E(\lambda)\otimes_{\mathbb{C}}M$ such that the associated 
$f=T^{k,M}g\in F_{dr}^{\omega,k}(M)_\lambda$ is $Y$-translation invariant
is more subtle to characterize. 

In the present general set-up we will give various characterizations
of the subspace $F_{dr}^{\omega,k}(M)_\lambda^{W^a}$ of $W^a=W\ltimes Y$
invariants in $F_{dr}^{\omega,k}(M)_\lambda$ (in the context of the previous
paragraph this corresponds to the bosonic $Y$-translation invariant
theory). The following type of characterization is commonly used in the physics
literature on one dimensional 
quantum spin-particle systems with delta function interaction
(see, e.g., \cite{LL,McG,Y,LLP,GY,CY,AFK}). 

\begin{prop}\label{BAEdirect}
Let $f\in F_{dr}^{\omega,k}(M)_\lambda$. 
Then $f$ is $W^a$-invariant if and only if
$g:=f_{C_+}\in E(\lambda)\otimes_{\mathbb{C}}M$ satisfies 
$\forall\, a\in F^a$, $\forall\, v\in V_a\cap\overline{C_+}$,
$\forall\, \omega\in\Omega$,
\begin{equation}\label{jsimple}
\begin{split}
s_a(g(v))&=g(v),\\
(\textup{Id}_M+s_a)\bigl((\partial_{Da^\vee}g)(v)\bigr)&=-2k_ag(v),\\
(\omega\otimes\omega)g&=g.
\end{split}
\end{equation}
\end{prop}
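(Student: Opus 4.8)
The plan is to translate $W^a$-invariance of $f\in F_{dr}^{\omega,k}(M)_\lambda$ into conditions on $g=f_{C_+}$ via the equivalence $T^k: F_{ir}^k\overset{\sim}{\to}F_{dr}^{\omega,k}$. Since $T^{k,M}$ intertwines the $A(k)$-actions and in particular the $W^a$-actions, $f$ is $W^a$-invariant if and only if $g\in E(\lambda)\otimes_{\mathbb{C}}M$ is invariant under $Q_{k,M}|_{W^a}$, i.e.\ under the integral-reflection action of Theorem~\ref{intrefl}. Because $W^a$ is generated by the simple reflections $s_a$ ($a\in F^a$) together with $\Omega$, invariance of $g$ is equivalent to $Q_{k,M}(s_a)g=g$ for all $a\in F^a$ and $Q_{k,M}(\omega)g=g$ for all $\omega\in\Omega$. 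The $\Omega$-condition unwinds immediately to $(\omega\otimes\omega)g=g$, the third equation of \eqref{jsimple}. So the real content is to show that, for $g\in E(\lambda)\otimes_{\mathbb{C}}M$, the equation $Q_{k,M}(s_a)g=g$ is equivalent to the first two equations of \eqref{jsimple} holding at every $v\in V_a\cap\overline{C_+}$.

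For a fixed $a\in F^a$ we have $Q_{k,M}(s_a)g=(s_a\otimes s_a)g-k_a(I(a)\otimes\textup{Id}_M)g$, so the condition reads
\begin{equation*}
(s_a\otimes s_a)g-g=k_a(I(a)\otimes\textup{Id}_M)g.
\end{equation*}
The strategy is to decompose $g$ into its $s_a$-symmetric and $s_a$-antisymmetric parts with respect to the diagonal $W$-action (using the gradient $Da$) and examine how $I(a)$ interacts with them. Using $\bigl(I(a)h\bigr)(v)=\int_0^{a(v)}h(v-tDa^\vee)\,dt$ and the elementary facts (as in \cite[Lemma~3.4]{EOS}, cf.\ the proof of Proposition~\ref{invariantsequal}) that $I(a)$ kills $s_a$-antiinvariant functions and sends $s_a$-invariants to $s_a$-antiinvariants, one reduces to analysing the restriction of all functions involved to the wall $V_a\cap\overline{C_+}$ and to first-order normal behaviour across it. Concretely: evaluating the identity on $V_a$ (where $a(v)=0$, so $I(a)g$ vanishes there) forces the symmetric part of $g$ on the wall, giving $s_a(g(v))=g(v)$ for $v\in V_a\cap\overline{C_+}$; differentiating the identity in the normal direction $Da^\vee$ and again restricting to the wall produces the Robin-type condition $(\textup{Id}_M+s_a)\bigl((\partial_{Da^\vee}g)(v)\bigr)=-2k_ag(v)$. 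The converse direction — that these two wall conditions, together with $g\in E(\lambda)\otimes_{\mathbb{C}}M$ (hence $g$ real analytic and a joint eigenfunction of the $p(\partial)$), are enough to recover $Q_{k,M}(s_a)g=g$ everywhere — will follow because both sides of the desired identity lie in $E(\lambda)\otimes_{\mathbb{C}}M$ (or a suitable analytic class: $Q_{k,M}(s_a)g$ is the restriction-propagation of an $E(\lambda)$-function), and two such real-analytic functions agreeing on the wall together with their normal derivatives must coincide; alternatively one invokes Proposition~\ref{jumpprop}/\ref{elliptic} to the effect that degree-$\le 1$ jump data along one wall determines the function on the adjacent chamber.

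The main obstacle I expect is the bookkeeping in the converse direction: one must be careful that imposing $Q_{k,M}(s_a)g=g$ only on the single wall $V_a\cap\overline{C_+}$ (rather than on all of $V$) genuinely suffices, which is where the rigidity of $E(\lambda)$ — its identification with the regular representation and the fact that elements are determined by low-order data along a wall (Proposition~\ref{elliptic}) — has to be used cleanly. A secondary, purely technical point is handling the $a=a_0$ case uniformly with the $a=\alpha\in F$ cases; here the conjugation trick \eqref{h}, writing $Q_{k,M}(s_0)$ as $t_u$-conjugate of $Q(s_0)$ for $u$ with $\theta(u)=1$, reduces $a_0$ to the finite-reflection situation and keeps all the wall computations formally identical. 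Once these are in place, collecting the three equivalences (one per simple generator, plus $\Omega$) yields exactly \eqref{jsimple}, completing the proof.
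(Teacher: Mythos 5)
Your route is genuinely different from the paper's and is essentially viable. You work entirely on the integral-reflection side: via the injectivity and intertwining property of $T^{k,M}$ you replace $W^a$-invariance of $f$ by $Q_{k,M}(w)g=g$ for all $w\in W^a$, reduce to the generators $s_a$ ($a\in F^a$) and $\Omega$, and extract \eqref{jsimple} by evaluating the identity $(s_a\otimes s_a)g-g=k_a(I(a)\otimes\textup{Id}_M)g$ and its normal derivative on the wall (using $a(v)=0$ there, so $(I(a)g)(v)=0$ and $\partial_{Da^\vee}\bigl(I(a)g\bigr)(v)=2g(v)$). The paper instead stays on the differential-reflection side: the forward implication is read off from the jump conditions \eqref{jumpconditions} and \eqref{jumpconditionsdegree1} at the walls of $C_+$ combined with $f_{s_aC_+}=(s_a\otimes s_a)f_{C_+}$; for the converse it defines the manifestly $W^a$-invariant spread $h_{w^{-1}C_+}=(w^{-1}\otimes w^{-1})g$, checks the degree $\leq 1$ jump conditions only at the walls of $C_+$ (the rest follow by invariance), and applies Proposition~\ref{elliptic} to conclude $h=T^{k,M}(g)=f$. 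Your version buys a cleaner forward direction; the paper's buys a converse that plugs directly into an already-proved rigidity statement.

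The one step you must tighten is the converse. As literally written, ``two real-analytic functions agreeing on the wall together with their normal derivatives must coincide'' is false ($x^2$ and $0$ along $x=0$); the rigidity has to come from the differential equations, not from analyticity alone. What you actually need is: if $h:=Q_{k,M}(s_a)g-g\in E(\lambda)\otimes_{\mathbb{C}}M$ vanishes together with $\partial_{Da^\vee}h$ on $V_a\cap\overline{C_+}$, then $h=0$. This does hold: first extend the vanishing from the face $V_a\cap\overline{C_+}$ to all of $V_a$ by real analyticity of $h|_{V_a}$, then apply a linear functional on $M$ and use uniqueness for the Cauchy problem for $\Delta-\|\lambda\|^2$ with data on the non-characteristic hyperplane $V_a$ (Holmgren), or, when $\lambda(\beta^\vee)\neq 0$ for all $\beta\in R$, the pairing of the plane-wave basis $\{e^{w\lambda}\}_{w\in W}$ under $w\mapsto s_{Da}w$. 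Note that Proposition~\ref{elliptic}, which you cite as a fallback, is stated for chamber-wise elements of $F_{dr}^k(M)_\lambda$ satisfying jump conditions, so invoking it essentially forces you back to the paper's construction of the invariant spread $h$; either fix is fine, but one of them has to be written out. The remaining ingredients of your argument (the reduction to generators, the $\Omega$-condition, the uniform treatment of $a_0$ via the conjugation \eqref{h}) are correct.
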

\begin{proof}
Suppose that $f$ is $W^a$-invariant. 
Then $f_{s_aC_+}=(s_a\otimes s_a)f_{C_+}$  
($a\in F^a$). For $b\in F^a$ and $v\in V_b\cap\overline{C_+}$ the jump condition
\eqref{jumpconditions} with $p=1$ becomes the first line of 
\eqref{jsimple}. The jump condition 
\eqref{jumpconditionsdegree1} for $b\in F^a$
becomes the second line of \eqref{jsimple}.
Let $\omega\in \Omega$. Since $\omega(C_+)=C_+$ we have
$f_{C_+}=(\omega\otimes\omega)f_{C_+}$, which is the third line 
of \eqref{jsimple}.

Conversely, let $f\in F_{dr}^{\omega,k}(M)_\lambda$ with $g:=f_{C_+}$ satisfying
\eqref{jsimple}. Then $g\in E(\lambda)\otimes_{\mathbb{C}}M$ and $f=T^{k,M}g$.
Define $h=(h_C)_{C\in \mathcal{C}}\in F_{dr}^k(M)_\lambda$ by 
\[
h_{w^{-1}C_+}=(w^{-1}\otimes w^{-1})g,\qquad \forall\, w\in W^a.
\]
This is well defined by the third line of \eqref{jsimple}.
By construction $h$ is $W^a$-invariant
and $h_{C_+}=g=f_{C_+}$. We now show that $h$ 
satisfies the derivative jump conditions
\eqref{jumpconditions} for $p\in S(V_{\mathbb{C}})$ of degree $\leq 1$.
By the $W^a$-invariance of $h$ it suffices to verify the jump conditions
for $C=C_+$ the fundamental chamber, $b\in F^a$ and 
$v\in V_b\cap\overline{C_+}$. The jump conditions 
then hold for $p=1$ in view of the first line of \eqref{jsimple}.
For $p$ of degree one it suffices to check the jump conditions for $p=Db^\vee$
(see \eqref{jumpconditionsdegree1}),
which is a direct consequence of the second line of \eqref{jsimple}.

By Proposition \ref{elliptic} we conclude that 
$h\in F_{dr}^{\omega,k}(M)_\lambda$,
hence $h=T^{k,M}(h_{C_+})=T^{k,M}(g)=f$. Thus $f$ is $W^a$-invariant.
\end{proof}
\begin{rema}
For $M=\mathbb{I}$ the conditions \eqref{jsimple}
for $g\in E(\lambda)$ simplify to
$(\partial_{Da^\vee}g)(v)=-k_ag(v)$ and $\omega g=g$
for $a\in F^a$, $v\in V_a\cap\overline{C_+}$ and $\omega\in \Omega$.
\end{rema}

\subsection{The Bethe ansatz equations}

Proposition \ref{BAEdirect} gives a simple criterium to ensure
$W^a$-invariance of $f=T^{k,M}g\in F_{dr}^{\omega,k}(M)_\lambda$.
A different characterization is 
$Q_{k,M}(w)g=g$ for all $w\in W^a$. We reformulate
this now in explicit Bethe ansatz equations for the coefficients in the
plane wave expansion of $g\in E(\lambda)\otimes_{\mathbb{C}}M$.

We write $C_k^{reg}\subset V_{\mathbb{C}}^*$ for the $W$-invariant set
\[
C_k^{reg}=\{\lambda\in V_{\mathbb{C}}^* \,\, | \,\, 0\not=\lambda(Da^\vee)
\not=k_a\quad \forall\, a\in R^a \}.
\]
We assume throughout this subsection that $\lambda\in C_k^{reg}$
unless specified explicitly otherwise.
For such a spectral value $\lambda$ 
we then have, besides the plane wave basis $\{e^{w\lambda}\}_{w\in W}$
of $E(\lambda)$, the cocycle $\{J_w^k(\lambda)\}_{w\in W^a}$ in 
$\mathbb{C}[W^a]^\times$ to our disposal. We furthermore fix a left $W^a$-module
$M$ in the remainder of this subsection. 

The following lemma will be useful in the computations.
\begin{lem}\label{technical}
Let $\mu\in V_{\mathbb{C}}^*$ satisfying $\mu(\alpha^\vee)\not=0$
for all $\alpha\in R$.
For $a\in F^a$ and $m\in M$ we have
\[Q_{k,M}(s_a)\bigl(e^\mu\otimes m\bigr)=
e^{-a(0)\mu(Da^\vee)}e^{s_{Da}\mu}\otimes s_am+
k_a\left(\frac{e^{-a(0)\mu(Da^\vee)}e^{s_{Da}\mu}-e^\mu}
{\mu(Da^\vee)}\right)
\otimes m.
\]
\end{lem}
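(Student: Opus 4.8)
The plan is to unwind the formula for $Q_{k,M}(s_a)$ supplied by Theorem \ref{intrefl} and then evaluate its two constituent operators on the rank-one tensor $e^\mu\otimes m$ by direct computation. By Theorem \ref{intrefl},
\[Q_{k,M}(s_a)=s_a\otimes s_a-k_aI(a)\otimes\textup{Id}_M,\]
where the first $s_a$ acts on $C^\omega(V)$ through the standard $W^a$-action $(s_a\cdot g)(v)=g(s_a^{-1}v)=g(s_av)$ (since $s_a$ is an involution) and the second acts on $M$. So everything reduces to computing $s_a\cdot e^\mu$ and $I(a)e^\mu$.

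For the first, I would use the decomposition $s_a=s_{Da}t_{a(0)(Da)^\vee}$ from \eqref{decompWtranslation} (writing $a=Da+a(0)1$), which gives $s_a(v)=s_{Da}(v)-a(0)(Da)^\vee$ for $v\in V$, using $s_{Da}((Da)^\vee)=-(Da)^\vee$. Pairing with $\mu$ and recalling that the $W$-action on $V_{\mathbb{C}}^*$ is dual to the one on $V_{\mathbb{C}}$, so that $\mu(s_{Da}v)=(s_{Da}\mu)(v)$, one obtains $\mu(s_a(v))=(s_{Da}\mu)(v)-a(0)\mu(Da^\vee)$, hence
\[s_a\cdot e^\mu=e^{-a(0)\mu(Da^\vee)}e^{s_{Da}\mu}.\]
Consequently $(s_a\otimes s_a)(e^\mu\otimes m)=e^{-a(0)\mu(Da^\vee)}e^{s_{Da}\mu}\otimes s_am$, which is exactly the first summand of the claimed identity.

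For the second, I would substitute $f=e^\mu$ into $(I(a)f)(v)=\int_0^{a(v)}f(v-tDa^\vee)\,dt$, pull out $e^{\mu(v)}$, and evaluate the elementary integral $\int_0^{a(v)}e^{-t\mu(Da^\vee)}\,dt=\bigl(1-e^{-a(v)\mu(Da^\vee)}\bigr)/\mu(Da^\vee)$; this is the only place the hypothesis $\mu(\alpha^\vee)\neq 0$ for all $\alpha\in R$ enters (note $Da\in R$, e.g. $Da=-\theta$ when $a=a_0$). Using $a(v)=Da(v)+a(0)$ together with the same rewriting as above, $\mu(v)-a(v)\mu(Da^\vee)=(s_{Da}\mu)(v)-a(0)\mu(Da^\vee)$, so
\[I(a)e^\mu=\frac{e^\mu-e^{-a(0)\mu(Da^\vee)}e^{s_{Da}\mu}}{\mu(Da^\vee)},\]
whence $-k_aI(a)e^\mu\otimes m$ is precisely the second summand. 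Adding the two contributions yields the lemma. I do not anticipate any genuine obstacle here: the only care needed is the bookkeeping of the affine reflection's action on exponentials — the translation part of $s_a$ contributes the scalar factor $e^{-a(0)\mu(Da^\vee)}$ — and the signs in the elementary integral; everything else is immediate from \eqref{decompWtranslation}, the definition of $I(a)$, and Theorem \ref{intrefl}.
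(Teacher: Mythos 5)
Your proposal is correct and follows exactly the paper's argument: the paper's proof likewise reduces the lemma to the decomposition $Q_{k,M}(s_a)=s_a\otimes s_a-k_aI(a)\otimes\textup{Id}_M$ from Theorem \ref{intrefl} together with the explicit evaluation $I(a)(e^\mu)=\bigl(e^\mu-e^{-a(0)\mu(Da^\vee)}e^{s_{Da}\mu}\bigr)/\mu(Da^\vee)$. You have merely written out the elementary computations (the action of the affine reflection on plane waves and the integral) that the paper leaves as a ``direct computation''.
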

\begin{proof}
Since $Q_{k,M}(s_a)=s_a\otimes s_a-k_aI(a)\otimes\textup{Id}_M$ the
lemma follows from a direct computation using
\[I(a)(e^\mu)=\frac{e^\mu-e^{-a(0)\mu(Da^\vee)}e^{s_{Da}\mu}}{\mu(Da^\vee)}.
\]
\end{proof}

We now first consider the subspace 
$F_{ir}^k(M)_\lambda^W$ of
$Q_{k,M}(W)$-invariant elements in 
$F_{ir}^k(M)_\lambda$.
Recall the normalized intertwiners $J_w^k(\lambda)\in\mathbb{C}[W^a]$ for
$w\in W^a$ from subsection \ref{newform}.

Denote $1$ for the unit element of $W$.
\begin{prop}\label{coeffW}
Let $m_w\in M$ ($w\in W$). Then
\begin{equation}\label{psitilde}
\sum_{w\in W}e^{w\lambda}\otimes m_w\in
F_{ir}^k(M)_\lambda^W
\end{equation}
if and only if $m_w=J_w^k(\lambda)m_1$ $\forall\, w\in W$.
\end{prop}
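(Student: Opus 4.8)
The plan is to prove both implications by induction on the length $l(w)$ of $w \in W$, using the cocycle relations for $J_w^k(\lambda)$ together with the explicit action of $Q_{k,M}(s_a)$ on plane waves from Lemma \ref{technical}. First I would observe that a vector $g = \sum_{w \in W} e^{w\lambda} \otimes m_w \in F_{ir}^k(M)_\lambda$ is $Q_{k,M}(W)$-invariant if and only if $Q_{k,M}(s_\alpha)g = g$ for every simple reflection $s_\alpha$ ($\alpha \in F$), since these generate $W$. So the whole statement reduces to understanding the single equation $Q_{k,M}(s_\alpha)g = g$.

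Next I would compute $Q_{k,M}(s_\alpha)g$ explicitly. For $\alpha \in F$ the affine root is linear, so $a(0) = 0$ and $Da^\vee = \alpha^\vee$; Lemma \ref{technical} gives
\[
Q_{k,M}(s_\alpha)(e^{w\lambda} \otimes m_w) = e^{s_\alpha w\lambda} \otimes s_\alpha m_w + k_\alpha \left( \frac{e^{s_\alpha w\lambda} - e^{w\lambda}}{(w\lambda)(\alpha^\vee)} \right) \otimes m_w.
\]
Since $\lambda \in C_k^{reg}$, the exponents $\{w\lambda\}_{w\in W}$ are pairwise distinct, so the $e^{w\lambda}$ are linearly independent and I can match coefficients of $e^{w\lambda}$ on both sides of $Q_{k,M}(s_\alpha)g = g$. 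Collecting the two terms that produce $e^{w\lambda}$ (namely the $v = w$ term of the first summand in the reindexed sum and the $v = s_\alpha w$ term of the correction) yields, for each $w$,
\[
m_w = s_\alpha m_{s_\alpha w} - \frac{k_\alpha}{(s_\alpha w\lambda)(\alpha^\vee)} m_{s_\alpha w} + \frac{k_\alpha}{(w\lambda)(\alpha^\vee)} m_w,
\]
and using $(s_\alpha w\lambda)(\alpha^\vee) = -(w\lambda)(\alpha^\vee)$ this rearranges to
\[
\left(1 - \frac{k_\alpha}{(w\lambda)(\alpha^\vee)}\right) m_w = \left(s_\alpha + \frac{k_\alpha}{(w\lambda)(\alpha^\vee)}\right) m_{s_\alpha w},
\]
i.e. $m_w = \dfrac{(w\lambda)(\alpha^\vee)\, s_\alpha + k_\alpha}{(w\lambda)(\alpha^\vee) - k_\alpha}\, m_{s_\alpha w} = J_{s_\alpha}^k(w\lambda)\, m_{s_\alpha w}$, where I read off the operator from the defining formula $J_{s_a}^k(\mu) = \frac{\mu(Da^\vee)s_a + k_a}{\mu(Da^\vee) - k_a}$. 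Thus $Q_{k,M}(W)$-invariance is equivalent to the system $m_w = J_{s_\alpha}^k(w\lambda) m_{s_\alpha w}$ for all $\alpha \in F$, $w \in W$; note $\lambda \in C_k^{reg}$ guarantees all denominators are nonzero.

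Finally I would show this recursive system is equivalent to $m_w = J_w^k(\lambda) m_1$ for all $w$. For the forward direction, induct on $l(w)$: if $w = s_\alpha w'$ with $l(w) = l(w') + 1$, then $J_{s_\alpha}^k(w'\lambda)^{-1} = J_{s_\alpha}^k(s_\alpha w'\lambda) = J_{s_\alpha}^k(w\lambda)$ (the intertwiner relation $J_{s_a}^k(\mu)^{-1} = J_{s_a}^k(s_a \mu)$, which follows from the defining formula), so $m_w = J_{s_\alpha}^k(w\lambda) m_{w'}$ rewritten via the cocycle relation $J_w^k(\lambda) = J_{s_\alpha}^k((Dw')\lambda) J_{w'}^k(\lambda) = J_{s_\alpha}^k(w'\lambda) J_{w'}^k(\lambda)$ gives $m_w = J_{s_\alpha}^k(w'\lambda)^{-1}\,\cdots$ — I would instead directly verify that $m_w := J_w^k(\lambda)m_1$ satisfies each recursion, which is precisely the cocycle identity applied to $w = s_\alpha \cdot (s_\alpha w)$, reading $J_w^k(\lambda) = J_{s_\alpha}^k((D(s_\alpha w))\lambda)\,J_{s_\alpha w}^k(\lambda) = J_{s_\alpha}^k(s_\alpha w \lambda)^{-1}\,J_{s_\alpha w}^k(\lambda)$ when $l(s_\alpha w) < l(w)$, but more cleanly: the recursion $m_w = J_{s_\alpha}^k(w\lambda)m_{s_\alpha w}$ together with $J_{s_\alpha}^k(w\lambda)J_{s_\alpha}^k(s_\alpha w\lambda) = 1$ shows the family $\{J_w^k(\lambda)m_1\}$ solves it by the cocycle property, and uniqueness of the solution (given $m_1$) follows since $W$ is generated by the $s_\alpha$ and the recursion determines every $m_w$ from $m_1$. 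Conversely, plugging $m_w = J_w^k(\lambda)m_1$ into the recursion and invoking the cocycle relation verifies it. The main obstacle I anticipate is bookkeeping the coefficient-matching cleanly — carefully tracking which pairs of summands contribute to a given $e^{w\lambda}$ and keeping the sign conventions $(s_\alpha\mu)(\alpha^\vee) = -\mu(\alpha^\vee)$ straight — rather than any conceptual difficulty; the genericity $\lambda \in C_k^{reg}$ does all the heavy lifting for linear independence and invertibility.
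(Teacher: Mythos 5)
Your strategy is the paper's own: reduce $W$-invariance to invariance under $Q_{k,M}(s_\alpha)$ for $\alpha\in F$, expand in the plane waves $e^{w\lambda}$ via Lemma \ref{technical}, match coefficients, and finish with the cocycle identity. But the coefficient matching contains a sign error, and it is not cosmetic: it lands you on the wrong recursion. Summing Lemma \ref{technical} over $v\in W$ with $\mu=v\lambda$, the coefficient of $e^{w\lambda}$ in $Q_{k,M}(s_\alpha)g$ is
\[
s_\alpha m_{s_\alpha w}+\frac{k_\alpha}{(s_\alpha w\lambda)(\alpha^\vee)}\,m_{s_\alpha w}-\frac{k_\alpha}{(w\lambda)(\alpha^\vee)}\,m_w,
\]
with both correction-term signs opposite to those in your display: the term indexed by $v=s_\alpha w$ contributes the \emph{positive} part of $\frac{k_\alpha}{(v\lambda)(\alpha^\vee)}(e^{s_\alpha v\lambda}-e^{v\lambda})$, and the term indexed by $v=w$ contributes the negative part. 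Equating to $m_w$ and using $(s_\alpha w\lambda)(\alpha^\vee)=-(w\lambda)(\alpha^\vee)$ gives
\[
\bigl((w\lambda)(\alpha^\vee)+k_\alpha\bigr)m_w=\bigl((w\lambda)(\alpha^\vee)s_\alpha-k_\alpha\bigr)m_{s_\alpha w},
\]
and replacing $w$ by $s_\alpha w$ throughout (the system runs over all $w$) this is
\[
\bigl((w\lambda)(\alpha^\vee)-k_\alpha\bigr)m_{s_\alpha w}=\bigl((w\lambda)(\alpha^\vee)s_\alpha+k_\alpha\bigr)m_w,
\qquad\text{i.e.}\qquad m_{s_\alpha w}=J_{s_\alpha}^k(w\lambda)\,m_w,
\]
with the intertwiner on the \emph{other} side from where you put it.

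This matters because your relation $m_w=J_{s_\alpha}^k(w\lambda)m_{s_\alpha w}$ is equivalent to $m_{s_\alpha w}=J_{s_\alpha}^k(s_\alpha w\lambda)m_w=J_{s_\alpha}^k(w\lambda)^{-1}m_w$, the inverse of the correct relation, and the family $m_w=J_w^k(\lambda)m_1$ does \emph{not} satisfy it: the cocycle identity gives $J_{s_\alpha w}^k(\lambda)=J_{s_\alpha}^k(w\lambda)J_w^k(\lambda)$, so $J_{s_\alpha}^k(w\lambda)J_{s_\alpha w}^k(\lambda)=J_{s_\alpha}^k(w\lambda)^2J_w^k(\lambda)\neq J_w^k(\lambda)$ in general; in rank one your recursion would force $m_s=J_s^k(\lambda)^{-1}m_1$ rather than $m_s=J_s^k(\lambda)m_1$, contradicting the statement. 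Your closing paragraph, which asserts that $\{J_w^k(\lambda)m_1\}$ "solves it by the cocycle property," is therefore false for the recursion you derived (and the argument there is in any case left unfinished). Once the signs are fixed, that last step becomes immediate and needs no induction on length or unitarity of the intertwiners: the system $m_{s_\alpha w}=J_{s_\alpha}^k(w\lambda)m_w$ ($\alpha\in F$, $w\in W$) determines every $m_w$ from $m_1$, and $w\mapsto J_w^k(\lambda)m_1$ satisfies exactly this recursion by the cocycle identity, which is how the paper concludes. Your use of $\lambda\in C_k^{reg}$ for linear independence of the $e^{w\lambda}$ and nonvanishing of the denominators is fine.
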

\begin{proof}
We denote the left hand side of \eqref{psitilde} by $f_\lambda$.
Clearly $f_\lambda\in F_{ir}^k(M)_\lambda$ and $f_\lambda$ is $W$-invariant 
if and only if
\[Q_{k,M}(s_{\alpha})f_\lambda=f_\lambda\qquad \forall
\,\alpha\in F.
\] 
In the latter equations we expand both sides
sums of the plane waves $e^{w\lambda}$ ($w\in W$) with coefficients in $M$,
using Lemma \ref{technical} for the left hands side. Comparing coefficients
we get
that $f_\lambda\in F_{ir}^k(M)_\lambda^W$ if and only if 
\[((w\lambda)(\alpha^\vee)-k_\alpha)m_{s_{\alpha}w}=
((w\lambda)(\alpha^\vee)s_\alpha+k_{\alpha})m_w\qquad
\forall\, \alpha\in F,\,\, \forall\, w\in W.
\]
This can be rewritten as
$m_{s_{\alpha}w}=J_{s_\alpha}^k(w\lambda)m_w$ $\forall\,\alpha\in F$
$\forall\, w\in W$. By the cocycle property of $J_w^k(\lambda)$ this in turn is
equivalent to $m_w=J_w^k(\lambda)m_1$ $\forall\, w\in W$.
\end{proof}
\begin{cor}\label{coeffW2}
The assignment
\[m\mapsto \psi_\lambda^m:=\sum_{w\in W}e^{w\lambda}\otimes J_w^k(\lambda)m
\]
defines a complex linear isomorphism $M\rightarrow F_{ir}^k(M)_\lambda^W$.
Furthermore,
\begin{equation}\label{intermsofQ}
c_k(\lambda)\psi_\lambda^m=\sum_{w\in W}Q_{k,M}(w)(e^\lambda\otimes m)
\end{equation}
with $c_k(\lambda)$ the $c$-function
\[
c_k(\lambda)=\prod_{\alpha\in R^+}\frac{\lambda(\alpha^\vee)-k_\alpha}
{\lambda(\alpha^\vee)}.
\]
\end{cor}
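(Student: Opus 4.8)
The plan is to reduce both claims to Proposition \ref{coeffW}. For the first claim, the map $m\mapsto\psi_\lambda^m$ is visibly $\mathbb{C}$-linear, $J_w^k(\lambda)$ being a fixed element of $\mathbb{C}[W^a]$ acting $\mathbb{C}$-linearly on $M$. Since $\lambda(\alpha^\vee)\neq 0$ for all $\alpha\in R$, the plane waves $\{e^{w\lambda}\}_{w\in W}$ form a basis of $E(\lambda)$ and $F_{ir}^k(M)_\lambda=\bigoplus_{w\in W}e^{w\lambda}\otimes_{\mathbb{C}}M$; Proposition \ref{coeffW} then says precisely that $F_{ir}^k(M)_\lambda^W=\{\psi_\lambda^m\mid m\in M\}$, which gives surjectivity, and injectivity is immediate on reading off the $e^\lambda$-component of $\psi_\lambda^m$, which is $J_1^k(\lambda)m=m$.

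For formula \eqref{intermsofQ}, set $\Phi:=\sum_{w\in W}Q_{k,M}(w)(e^\lambda\otimes m)$. I would first check $\Phi\in F_{ir}^k(M)_\lambda^W$: elements of $V$ act on $F_{ir}^k(M)$ by $\partial_v\otimes\textup{Id}_M$ (Theorem \ref{intrefl}), so $S(V_{\mathbb{C}})^W$ acts on $e^\lambda\otimes m$ through $p\mapsto p(\lambda)$, putting $e^\lambda\otimes m$ in $F_{ir}^k(M)_\lambda$; since $S(V_{\mathbb{C}})^W$ is central in $A(k)$, the subalgebra $\mathbb{C}[W]$ preserves $F_{ir}^k(M)_\lambda$, and $\Phi$ is manifestly $Q_{k,M}(W)$-invariant. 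By the first claim, $\Phi=\psi_\lambda^{m'}$ for a unique $m'\in M$, and it remains to show $m'=c_k(\lambda)m$. Rather than compute $\Phi$ completely, the idea is to read off $m'$ from the coefficient of a single, extremal plane wave, namely $e^{w_0\lambda}$ with $w_0\in W$ the longest element; on the $\psi_\lambda^{m'}$ side this coefficient is $J_{w_0}^k(\lambda)m'$.

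The substance lies in the matching computation on the $\Phi$ side. By Lemma \ref{technical}, $Q_{k,M}(s_\alpha)$ carries $e^\mu\otimes M$ into $(e^{s_\alpha\mu}\oplus e^\mu)\otimes M$, so for a reduced word $w=s_{i_1}\cdots s_{i_\ell}$ the vector $Q_{k,M}(w)(e^\lambda\otimes m)$ is supported on the $e^{v\lambda}$ with $v$ a subword of $s_{i_1}\cdots s_{i_\ell}$, hence $v\leq w$ in the Bruhat order; therefore only $w=w_0$ contributes to the $e^{w_0\lambda}$-coefficient of $\Phi$, and within $Q_{k,M}(w_0)(e^\lambda\otimes m)$ that coefficient comes from the unique ``always reflect'' path through a fixed reduced word $w_0=s_{i_1}\cdots s_{i_\ell}$ ($\ell=|R^+|$). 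Propagating this path through the $\ell$ applications of Lemma \ref{technical}, and using at each step that the $e^{s_\alpha\mu}$-coefficient operator $s_\alpha+k_\alpha/\mu(\alpha^\vee)$ equals $\bigl(1-k_\alpha/\mu(\alpha^\vee)\bigr)J_{s_\alpha}^k(\mu)$, gives the $e^{w_0\lambda}$-coefficient of $\Phi$ as $\bigl(\prod_{j=1}^{\ell}(1-k_{\alpha_{i_j}}/\nu_j(\alpha_{i_j}^\vee))\bigr)\,J_{s_{i_1}}^k(\nu_1)\cdots J_{s_{i_\ell}}^k(\nu_\ell)\,m$, where $\nu_j:=s_{i_{j+1}}\cdots s_{i_\ell}\lambda$. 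Iterating the cocycle relation for $J$ collapses the product of intertwiners to $J_{w_0}^k(\lambda)$; and since $\nu_j(\alpha_{i_j}^\vee)=\lambda(\beta_j^\vee)$ with $\beta_j:=s_{i_\ell}\cdots s_{i_{j+1}}(\alpha_{i_j})$, while $\{\beta_j\mid 1\leq j\leq\ell\}=R^+$ and $k$ is $W$-invariant, the scalar prefactor is exactly $c_k(\lambda)$. Equating $c_k(\lambda)J_{w_0}^k(\lambda)m$ with $J_{w_0}^k(\lambda)m'$ and cancelling the unit $J_{w_0}^k(\lambda)\in\mathbb{C}[W^a]^\times$ yields $m'=c_k(\lambda)m$, proving \eqref{intermsofQ}. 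I expect the main obstacle to be precisely this ``leading coefficient'' step: confirming that exactly one path reaches the extremal plane wave, and recognising the emerging scalar as the $c$-function via the Gindikin--Karpelevich-type telescoping over the reduced word of $w_0$. Everything else is formal.
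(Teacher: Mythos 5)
Your proposal is correct and follows the paper's own route: the first claim is read off from Proposition \ref{coeffW}, and for \eqref{intermsofQ} the paper likewise identifies the right-hand side as $\psi_\lambda^{m'}$ and extracts $m'=c_k(\lambda)m$ by comparing the $e^{w_0\lambda}$-coefficients via Lemma \ref{technical}. The subword/Bruhat-order argument and the telescoping over a reduced word of $w_0$ that you spell out are exactly the details the paper leaves to the reader in its phrase ``one easily deduces,'' and they check out.
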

\begin{proof}
The first statement is immediate from the previous proposition.
To prove the second statement we write $g_\lambda$ for the right hand side
of \eqref{intermsofQ}. Then $g_\lambda\in F_{ir}^k(M)_\lambda^W$, hence
\[g_\lambda=\psi_\lambda^{m^\prime}
\]
for a unique $m^\prime\in M$. By Lemma \ref{technical} one easily deduces that
\[J^k_{w_0}(\lambda)m^\prime=c_k(\lambda)J_{w_0}^k(\lambda)m
\]
for the longest Weyl group element $w_0\in W$ with respect to the 
basis $F$ of $R$. Hence $m^\prime=c_k(\lambda)m$, which concludes the proof.
\end{proof}
\begin{rema}
{\bf (i)}
The proofs of Proposition \ref{coeffW} and Corollary
\ref{coeffW2} are based on the methods from \cite[Prop. 1.4]{O}.\\ 
{\bf (ii)} 
For $M=\mathbb{I}$ we write $\psi_\lambda$ 
for $\psi_\lambda^m$ with $m=1$. Since $j_w^k(\lambda)=c_k(\lambda)^{-1}
c_k(w\lambda)$ for $w\in W$ (see \eqref{jfiniteexplicit})
we get
\[\psi_\lambda=\frac{1}{c_k(\lambda)}\sum_{w\in W}c_k(w\lambda)e^{w\lambda},
\]
in accordance with \cite{Ga,O,HO}.
\end{rema}

We next analyze when $\psi_\lambda^m\in F_{ir}^k(M)_\lambda^W$ is $W^a$-invariant.
We start with the following preliminary lemma.
\begin{lem}\label{snul}
For $m\in M$ we have 
\begin{equation}\label{1}
Q_{k,M}(s_{0})\psi_\lambda^m=\psi_\lambda^m
\end{equation}
if and only if
\begin{equation}\label{2}
J_{s_0w}^k(\lambda)m=e^{-(w\lambda)(\theta^\vee)}J_{s_{\theta}w}^k(\lambda)m\qquad 
\forall\,w\in W.
\end{equation}
\end{lem}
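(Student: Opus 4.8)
The plan is to expand the identity \eqref{1} in the plane-wave basis $\{e^{w\lambda}\}_{w\in W}$ of $E(\lambda)$ — a basis because $\lambda\in C_k^{reg}$ is regular — and to read off the resulting conditions on the $M$-coefficients. So I start from $\psi_\lambda^m=\sum_{w\in W}e^{w\lambda}\otimes J_w^k(\lambda)m$ and evaluate $Q_{k,M}(s_0)\psi_\lambda^m$ term by term.

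First I would apply Lemma \ref{technical} with $a=a_0\in F^a$ and $\mu=w\lambda$, which is allowed since $(w\lambda)(\alpha^\vee)\neq0$ for all $\alpha\in R$. Because $Da_0=-\theta$ we have $Da_0^\vee=-\theta^\vee$, $a_0(0)=1$ and $s_{Da_0}=s_\theta$, so setting $c_w:=(w\lambda)(\theta^\vee)$ and using $s_\theta(w\lambda)=(s_\theta w)\lambda$ the lemma yields
\[
Q_{k,M}(s_0)\bigl(e^{w\lambda}\otimes J_w^k(\lambda)m\bigr)
=e^{c_w}e^{(s_\theta w)\lambda}\otimes s_0J_w^k(\lambda)m
-\frac{k_0}{c_w}\Bigl(e^{c_w}e^{(s_\theta w)\lambda}-e^{w\lambda}\Bigr)\otimes J_w^k(\lambda)m.
\]
Summing over $w$ and picking out the coefficient of a fixed $e^{v\lambda}$ — only the summands $w=v$ and $w=s_\theta v$ contribute, and $e^{v\lambda}\neq e^{(s_\theta v)\lambda}$ by regularity — one gets, using $c_{s_\theta v}=-c_v$,
\[
e^{-c_v}s_0J_{s_\theta v}^k(\lambda)m+\frac{k_0}{c_v}e^{-c_v}J_{s_\theta v}^k(\lambda)m+\frac{k_0}{c_v}J_v^k(\lambda)m,
\]
while in $\psi_\lambda^m$ the coefficient of $e^{v\lambda}$ is $J_v^k(\lambda)m$. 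Thus \eqref{1} holds if and only if, for every $v\in W$,
\[
e^{-c_v}(c_vs_0+k_0)J_{s_\theta v}^k(\lambda)m=(c_v-k_0)J_v^k(\lambda)m.
\]

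Next I would recast this in terms of the normalized intertwiners. Evaluating the defining formula for $J_{s_{a_0}}^k$ at $\nu=s_\theta v\lambda$, where $\nu(Da_0^\vee)=-\nu(\theta^\vee)=c_v$, gives $J_{s_0}^k(s_\theta v\lambda)=(c_vs_0+k_0)(c_v-k_0)^{-1}$; the denominator $c_v-k_0$ is nonzero by $\lambda\in C_k^{reg}$ applied to the affine root $v^{-1}\theta-1=-v^{-1}a_0\in R^a$, whose multiplicity equals $k_0$ by $W^a$-invariance of $k$ (note $s_{a_0}(a_0)=-a_0$, so $\theta-1=-a_0$ lies in the $W^a$-orbit of $a_0$). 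Substituting, cancelling the scalar $c_v-k_0$, and using the cocycle relation $J_{s_0}^k((D\tau)\lambda)J_\tau^k(\lambda)=J_{s_0\tau}^k(\lambda)$ with $\tau=s_\theta v$, the condition becomes $e^{-c_v}J_{s_0s_\theta v}^k(\lambda)m=J_v^k(\lambda)m$ for all $v\in W$. Finally, substituting $v\mapsto s_\theta v$ (a permutation of $W$) and using $s_\theta^2=1$ together with $c_{s_\theta v}=-c_v$, this family of identities turns into $J_{s_0v}^k(\lambda)m=e^{-(v\lambda)(\theta^\vee)}J_{s_\theta v}^k(\lambda)m$ for all $v\in W$, which is exactly \eqref{2}. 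This establishes the claimed equivalence.

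The argument is almost purely computational, so there is no real conceptual obstacle; the one thing that demands care is the ubiquitous sign from $Da_0=-\theta$, which enters the plane-wave prefactors $e^{\pm c_w}$, the reflection $s_{Da_0}=s_\theta$, and the evaluation $(s_\theta v\lambda)(Da_0^\vee)=c_v$ — mishandling any of these would corrupt the final formula. The only other point to check is the non-vanishing of $c_v$ (needed to invoke Lemma \ref{technical} and to divide by $c_v$) and of $c_v-k_0$ (needed to divide out the intertwiner denominator); both are special cases of $\lambda\in C_k^{reg}$, for the affine roots $v^{-1}\theta$ and $v^{-1}\theta-1$ in $R^a$ respectively.
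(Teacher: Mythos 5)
Your proof is correct and follows the same route the paper indicates: substitute the plane-wave expansion of $\psi_\lambda^m$, apply Lemma \ref{technical} with $a=a_0$ (so $Da_0=-\theta$, $a_0(0)=1$), equate coefficients of the linearly independent $e^{v\lambda}$, and recognize the resulting relation as the cocycle identity $J^k_{s_0 s_\theta v}(\lambda)=J^k_{s_0}((s_\theta v)\lambda)J^k_{s_\theta v}(\lambda)$. Your sign bookkeeping ($c_{s_\theta v}=-c_v$, $(s_\theta v\lambda)(Da_0^\vee)=c_v$) and the justification of the divisions by $c_v$ and $c_v-k_0$ via $\lambda\in C_k^{reg}$ are all accurate, so nothing is missing.
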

\begin{proof}
Substitute the plane wave expansion $\psi_\lambda^m=\sum_{w\in W}
e^{w\lambda}\otimes J_w^k(\lambda)m$
in the equality $Q_{k,M}(s_0)\psi_\lambda^m=\psi_\lambda^m$ and expand
again both sides as sum of plane waves, 
using Lemma \ref{technical} 
for the left hand side. Equating the coefficients
gives the equivalence between 
\eqref{1} and \eqref{2} (compare with 
the proof of Proposition \ref{coeffW}).
\end{proof}
\begin{thm}[Bethe ansatz equations]\label{BAEthm}
Let $\lambda\in C_k^{reg}$ and $m\in M$. We have
$\psi_\lambda^m\in F_{ir}^k(M)_\lambda^{W^a}$
if and only if
\begin{equation}\label{BAE}
J_{y}^k(\lambda)m=e^{\lambda(y)}m\qquad \forall\,y\in Y.
\end{equation}
\end{thm}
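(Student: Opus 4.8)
The plan is to use that $\psi_\lambda^m$ is automatically $Q_{k,M}(W)$-invariant by Corollary~\ref{coeffW2}, and that $W^a=W\ltimes Y$ is generated as a group by $W$ together with $s_0$ and the elements of $\Omega$. Hence $\psi_\lambda^m\in F_{ir}^k(M)_\lambda^{W^a}$ if and only if $Q_{k,M}(s_0)\psi_\lambda^m=\psi_\lambda^m$ and $Q_{k,M}(\omega)\psi_\lambda^m=\psi_\lambda^m$ for every $\omega\in\Omega$. I would translate each of these two families of equations into eigenvalue equations $J_z^k(\lambda)m=e^{\lambda(z)}m$ for $z$ ranging over explicit subsets of $Y$, and then observe that the union of these subsets generates $Y$.

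For the $s_0$-equation I would start from Lemma~\ref{snul}, which says that $Q_{k,M}(s_0)\psi_\lambda^m=\psi_\lambda^m$ is equivalent to $J_{s_0w}^k(\lambda)m=e^{-(w\lambda)(\theta^\vee)}J_{s_\theta w}^k(\lambda)m$ for all $w\in W$. Since $s_0=s_\theta t_{-\theta^\vee}$ (from $s_a=s_\alpha t_{m\alpha^\vee}$ with $a=a_0=-\theta+1$) one has $s_0w=s_\theta w\,t_{-w^{-1}\theta^\vee}$, and because translations act trivially under the gradient map, the cocycle relation gives $J_{s_0w}^k(\lambda)=J_{s_\theta w}^k(\lambda)\,J_{-w^{-1}\theta^\vee}^k(\lambda)$. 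As $J_{s_\theta w}^k(\lambda)$ is a unit in $\mathbb{C}[W^a]$, the $s_0$-equation collapses to $J_{-w^{-1}\theta^\vee}^k(\lambda)m=e^{\lambda(-w^{-1}\theta^\vee)}m$ for all $w\in W$, that is, to $J_z^k(\lambda)m=e^{\lambda(z)}m$ for all $z\in W\theta^\vee$. For the $\Omega$-equations I would use $Q_{k,M}(\omega)=\omega\otimes\omega$ (Theorem~\ref{intrefl}). Writing $\omega=w_\omega t_{y_\omega}$ with $w_\omega=D\omega$, a direct computation gives $\omega(e^{w\lambda})=e^{-(w\lambda)(y_\omega)}e^{w_\omega w\lambda}$ and $\omega J_w^k(\lambda)=J_{\omega w}^k(\lambda)$; substituting $w=w_\omega^{-1}w'$ in the plane-wave sum, using $\omega w_\omega^{-1}=t_{w_\omega y_\omega}$ together with the consequence $J_z^k(w\lambda)J_w^k(\lambda)=J_w^k(\lambda)J_{w^{-1}z}^k(\lambda)$ of $t_zw=w\,t_{w^{-1}z}$, and comparing coefficients of the linearly independent $e^{w\lambda}$ ($w\in W$, using that $\lambda$ is $W$-regular), one finds that $Q_{k,M}(\omega)\psi_\lambda^m=\psi_\lambda^m$ is equivalent to $J_z^k(\lambda)m=e^{\lambda(z)}m$ for all $z\in Ww_\omega y_\omega$.

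To assemble these, I would first record that $\{z\in Y\mid J_z^k(\lambda)m=e^{\lambda(z)}m\}$ is a subgroup of $Y$: this follows from $z\mapsto J_z^k(\lambda)$ being a group homomorphism $Y\to\mathbb{C}[W^a]^\times$ (the translation part of the cocycle, cf.\ Proposition~\ref{explicitcocycle}{\bf (i)}), since then $J_{z+z'}^k(\lambda)m=J_z^k(\lambda)\,e^{\lambda(z')}m=e^{\lambda(z+z')}m$ and $J_{-z}^k(\lambda)m=e^{-\lambda(z)}m$. By the previous paragraph $\psi_\lambda^m$ is $W^a$-invariant exactly when this subgroup contains $W\theta^\vee$ and every set $Ww_\omega y_\omega$ ($\omega\in\Omega$), hence exactly when it contains the subgroup they generate. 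Now $\langle W\theta^\vee\rangle=Q^\vee$ — a standard fact for an irreducible root system, e.g.\ because $Q^\vee$ is the translation subgroup of $\langle s_a\mid a\in F^a\rangle$ and is generated by the $W$-conjugates of $t_{\theta^\vee}=s_0s_\theta$ — while the $w_\omega y_\omega$ run through a set of representatives of $Y/Q^\vee\cong\Omega$ (using $(w-1)y\in Q^\vee$ for $w\in W$, $y\in Y$). Therefore $W\theta^\vee\cup\bigcup_{\omega}Ww_\omega y_\omega$ generates all of $Y$, and the condition becomes precisely \eqref{BAE}.

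The computations in the middle paragraph are routine once one is careful with the semidirect-product bookkeeping in $W^a$ (the rewritings $s_0=s_\theta t_{-\theta^\vee}$, $\omega w_\omega^{-1}=t_{w_\omega y_\omega}$, and repeated use of the cocycle identity). The step I would watch most closely is the final one — passing from validity of the eigenvalue equations on the generating sets $W\theta^\vee$ and $Ww_\omega y_\omega$ to validity on all of $Y$ — since it genuinely relies on the subgroup observation together with the identifications $\langle W\theta^\vee\rangle=Q^\vee$ and $Y/Q^\vee\cong\Omega$.
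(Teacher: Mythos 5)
Your proof is correct and follows essentially the same route as the paper's: reduce $s_0$-invariance via Lemma \ref{snul} and the cocycle identity $J_{s_0w}^k(\lambda)=J_{s_\theta w}^k(\lambda)J_{-w^{-1}(\theta^\vee)}^k(\lambda)$ to the equations on $W\theta^\vee$, handle $\Omega$ by a direct plane-wave computation yielding the equations on $Ww_\omega y_\omega$, and finish by observing that $Q^\vee=\langle W\theta^\vee\rangle$ together with the translation parts of the elements of $\Omega$ generate $Y$. The only (welcome) addition is that you spell out the subgroup property of $\{z\in Y \mid J_z^k(\lambda)m=e^{\lambda(z)}m\}$ and the identifications $\langle W\theta^\vee\rangle=Q^\vee$ and $Y/Q^\vee\cong\Omega$, which the paper leaves implicit in its closing sentence.
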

\begin{proof}
Since $s_0=s_{\theta}t_{-\theta^\vee}$, the cocycle condition for 
$J_w^k(\lambda)$ ($w\in W^a$) gives 
\[J_{s_0w}^k(\lambda)=J_{s_{\theta}w}^k(\lambda)J_{-w^{-1}(\theta^\vee)}^k(\lambda),
\qquad \forall\, w\in W.
\]
By Lemma \ref{snul} we conclude that $\psi_\lambda^m\in 
F_{ir}^k(M)_\lambda^{W\ltimes Q^\vee}$ if and only if
\begin{equation}\label{almostQ}
J_{y}^k(\lambda)m=e^{\lambda(y)}m
\end{equation}
for $y\in Q^\vee$ of the form $y=-w^{-1}(\theta^\vee)$ ($w\in W$).
The co-root lattice $Q^\vee$ is generated by $W\theta^\vee$ and $y\mapsto
J_y^k(\lambda)$ defines 
a group homomorphism $Y\rightarrow \mathbb{C}[W^a]^\times$, 
hence the theorem is correct when $Y=Q^\vee$. 

Fix $\omega\in\Omega$ and write $\omega=t_y\sigma$ with
$y\in Y$ and $\sigma\in W$. For $w\in W$ we then have
\begin{equation*}
\begin{split}
\omega(e^{w\lambda})&=e^{-(\sigma w\lambda)(y)}e^{\sigma w\lambda},\\
\omega J_w^k(\lambda)m&=J_{\omega w}^k(\lambda)m=J_{t_y\sigma w}^k(\lambda)m,
\end{split}
\end{equation*}
where the first identity is in $C^\omega(V)$ (with the
standard $W^a$-action). 
Using the plane wave expansion $\psi_\lambda^m=\sum_{w\in W}
e^{w\lambda}\otimes J_w^k(\lambda)m$ we conclude that
\[Q_{k,M}(\omega)\psi_\lambda^m=\psi_\lambda^m\]
if and only if
\[
J_{t_yw}^k(\lambda)m=e^{(w\lambda)(y)}J_w^k(\lambda)m\qquad \forall\, w\in W.
\]
By the cocycle condition for $J_w^k(\lambda)$ ($w\in W^a$), 
this in turn is equivalent to
\[J_{wy}^k(\lambda)m=e^{\lambda(wy)}m\qquad \forall\, w\in W.
\]
If $\mathcal{S}$ denotes the set of $y\in Y$ for which there
exists an $\omega\in\Omega$ of the form $\omega=t_y\sigma$ 
($\sigma\in W$), then we conclude that $\psi_\lambda^m\in 
F_{ir}^k(M)_\lambda^{W^a}$
if and only if \eqref{almostQ} holds for all $y$ in the sublattice $L$
of $Y$ generated by $Q^\vee$ and the $W$-orbit of $\mathcal{S}$.
Since $L=Y$, this concludes the proof of the theorem.
\end{proof}
For nontrivial $W^a$-modules $M$ various special instances
of the Bethe ansatz equations \eqref{BAE} can be found in
the literature, see, e.g., \cite{McG,Y,S,CY}. For $M=\mathbb{I}$
the theorem becomes the following statement.
\begin{cor}\label{trivcase} 
\[\psi_\lambda=\frac{1}{c_k(\lambda)}\sum_{w\in W}c_k(w\lambda)e^{w\lambda}\in 
E(\lambda)
\]
is $Q_{k,\mathbb{I}}(W^a)$-invariant if and only if the spectral parameter
$\lambda\in C_k^{reg}$ satisfies the Bethe ansatz equations
\[j_{y}^k(\lambda)=e^{\lambda(y)}\qquad \forall\, y\in Y
\]
with $j_{y}^k(\lambda)$ given by \eqref{jexplicit}.
\end{cor}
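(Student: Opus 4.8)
The plan is to deduce Corollary \ref{trivcase} as the specialization of Theorem \ref{BAEthm} (the Bethe ansatz equations) to the trivial $W^a$-module $M=\mathbb{I}=\mathbb{C}$. First I would unwind what the relevant objects become in this case. Since $F_{ir}^k(\mathbb{I})=C^\omega(V)\otimes_{\mathbb{C}}\mathbb{I}$ identifies with $C^\omega(V)$, the graded piece $F_{ir}^k(\mathbb{I})_\lambda=E(\lambda)\otimes_{\mathbb{C}}\mathbb{I}$ is simply $E(\lambda)$, and the $W^a$-action via $Q_{k,\mathbb{I}}$ restricts to $E(\lambda)$; thus the assertion ``$\psi_\lambda$ is $Q_{k,\mathbb{I}}(W^a)$-invariant'' is literally the assertion $\psi_\lambda=\psi_\lambda^1\in F_{ir}^k(\mathbb{I})_\lambda^{W^a}$ in the notation of Corollary \ref{coeffW2}. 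Moreover, since $\mathbb{I}$ is one-dimensional with all of $W^a$ acting as the identity, an element $\sum_{v\in W^a}c_vv\in\mathbb{C}[W^a]$ acts on $\mathbb{I}$ by the scalar $\sum_vc_v=\chi\bigl(\sum_vc_vv\bigr)$; in particular $J_w^k(\lambda)$ acts on $\mathbb{I}$ as multiplication by $j_w^k(\lambda)=\chi\bigl(J_w^k(\lambda)\bigr)$, so that $\psi_\lambda^1=\sum_{w\in W}j_w^k(\lambda)e^{w\lambda}$.

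Next I would record that this $\psi_\lambda^1$ agrees with the function $\psi_\lambda$ displayed in the statement. From the explicit product formula \eqref{jfiniteexplicit} for $j_w^k(\lambda)$ ($w\in W$) together with the definition of the $c$-function $c_k$ one checks $j_w^k(\lambda)=c_k(\lambda)^{-1}c_k(w\lambda)$, whence $\psi_\lambda^1=c_k(\lambda)^{-1}\sum_{w\in W}c_k(w\lambda)e^{w\lambda}$; this identity is already noted in the remark following Corollary \ref{coeffW2}. In particular $\psi_\lambda\in E(\lambda)\subset C^\omega(V)$ is well defined and nonzero whenever $\lambda\in C_k^{reg}$.

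Then I would invoke Theorem \ref{BAEthm} with $M=\mathbb{I}$ and $m=1$: it gives that $\psi_\lambda=\psi_\lambda^1\in F_{ir}^k(\mathbb{I})_\lambda^{W^a}$ if and only if $J_y^k(\lambda)\cdot 1=e^{\lambda(y)}\cdot 1$ in $\mathbb{I}$ for all $y\in Y$. Reading off the scalar action on $\mathbb{I}$ (equivalently, applying $\chi$) converts this into $j_y^k(\lambda)=e^{\lambda(y)}$ for all $y\in Y$, where $j_y^k(\lambda)=j_{t_y}^k(\lambda)$. Substituting the closed formula \eqref{jexplicit} for $j_y^k(\lambda)$ then yields exactly the Bethe ansatz equations in the statement, completing the argument.

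I do not anticipate a genuine obstacle here: the corollary is a direct specialization, and the only steps requiring a line of justification are the identification of the scalar action of $\mathbb{C}[W^a]$ on $\mathbb{I}$ with the augmentation $\chi$ (immediate) and the $c$-function identity $j_w^k(\lambda)=c_k(\lambda)^{-1}c_k(w\lambda)$ (a short computation from \eqref{jfiniteexplicit}), both of which are essentially already available in the text.
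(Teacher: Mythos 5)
Your proposal is correct and is exactly the intended argument: the paper offers no separate proof of Corollary \ref{trivcase}, presenting it as the immediate specialization of Theorem \ref{BAEthm} to $M=\mathbb{I}$, with the group algebra acting through the augmentation $\chi$ and the identity $j_w^k(\lambda)=c_k(\lambda)^{-1}c_k(w\lambda)$ supplied by the remark following Corollary \ref{coeffW2}. Nothing is missing.
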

Corollary \ref{trivcase} generalizes the Bethe ansatz equations
from \cite{EOS}, which dealt with the special case $k_0=k_\theta$ and
$Y=Q^\vee$. For root systems $R$ of classical type it goes back to
\cite{LL,Ga}. 

\subsection{Solutions to the Bethe ansatz equations}

For $M=\mathbb{I}$, $k_0=k_\theta$ and
$Y=Q^\vee$, the set of spectral values $\lambda$ solving the 
Bethe ansatz equations \eqref{BAE}
has been analyzed in detail. For $R$ the root system of type $A$
it goes back to \cite{YY}. The solutions are parametrized by
the extrema of a family of strictly convex functions. Such phenomena
happen in various other quantum integrable models, such as the
Gaudin model \cite{RV}.

We now analyze the Bethe ansatz equations \eqref{BAE}
for $M=\textup{Fun}_{\mathbb{C}}(W^a)$ the complex valued functions on $W^a$,
viewed as left $W^a$-module by the left regular action
$\bigl(L(w)f\bigr)(w^\prime)=f(w^{-1}w^\prime)$ for $w,w^\prime\in W^a$ and 
$f\in\textup{Fun}_{\mathbb{C}}(W^a)$. 
We denote $\textup{Fun}_{\mathbb{C}}^R(W^a)$ for the vector space
$\textup{Fun}_{\mathbb{C}}(W^a)$ endowed with the left $W^a$-action
$\bigl(R(w)f\bigr)(w^\prime)=f(w^\prime w)$.

Consider for $\lambda\in V_{\mathbb{C}}^*$
the $A(k)$-module $F_{ir}^k(\mathbb{I})_\lambda$ 
as $W^a$-module by restricting the representation
map $Q_{k,\mathbb{I}}$ to $W^a$. We view the complex linear dual 
$F_{ir}^k(\mathbb{I})_\lambda^*$ as $W^a$-module with respect to the 
associated contragredient action.
We have a complex linear map 
$\Xi_{\lambda}: F_{ir}^k(\mathbb{I})_\lambda^*\otimes
F_{ir}^k(\mathbb{I})_\lambda
\rightarrow \textup{Fun}_{\mathbb{C}}(W^a)$ defined by
\[\Xi_\lambda(g\otimes v):=g\bigl(Q_{k,\mathbb{I}}(\cdot)v\bigr).
\]
It intertwines the
$W^a\times W^a$-action on $F_{ir}^k(\mathbb{I})_\lambda^*\otimes 
F_{ir}^k(\mathbb{I})_\lambda$
with the $W^a\times W^a$-action $L\times R$ on $\textup{Fun}_{\mathbb{C}}(W^a)$.
In particular, $\Xi_\lambda$ is a $W^a$-module morphism
with respect to the conjugation action
$(wf)(w^\prime)=f(w^{-1}w^{\prime}w)$ on $\textup{Fun}_{\mathbb{C}}(W^a)$.

Denote $\epsilon_\lambda: \mathbb{I}\rightarrow 
F_{ir}^k(\mathbb{I})_\lambda\otimes 
F_{ir}^k(\mathbb{I})_\lambda^*$ for the co-evaluation map. It is the $W^a$-module
morphism defined by 
$\epsilon_\lambda(1)=\sum_iv_i\otimes v_i^*$, where $\{v_i\}_i$ is
a basis of $F_{ir}^k(\mathbb{I})_\lambda$
and $\{v_i^*\}_i$ is the corresponding dual basis. 

\begin{thm}\label{BAEW}
Let $\lambda\in V_{\mathbb{C}}^*$. The mapping 
\[g\mapsto 
(\textup{Id}_{F_{ir}^k(\mathbb{I})_\lambda}\otimes \Xi_\lambda)
(\epsilon_\lambda\otimes\textup{Id}_{F_{ir}^k(\mathbb{I})_\lambda})g
\]
defines an isomorphism
\[\Psi_\lambda: F_{ir}^k(\mathbb{I})_\lambda\rightarrow 
F_{ir}^k(\textup{Fun}_{\mathbb{C}}(W^a))_\lambda^{W^a}
\]
of left $W^a$-modules, where 
$F_{ir}^k(\textup{Fun}_{\mathbb{C}}(W^a))_\lambda^{W^a}$
is regarded as a $W^a$-submodule of 
$E(\lambda)\otimes_{\mathbb{C}}\textup{Fun}_{\mathbb{C}}^R(W^a)$
with respect to the action $w\mapsto \textup{Id}\otimes R(w)$.
\end{thm}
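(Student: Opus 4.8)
The plan is to make $\Psi_\lambda$ completely explicit and then to identify its image with the invariant subspace by specializing Proposition \ref{invariantsequal} to central character $\lambda$. Throughout write $M=\textup{Fun}_{\mathbb{C}}(W^a)$.

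First I would unwind the coevaluation $\epsilon_\lambda$ and the map $\Xi_\lambda$. Since $F_{ir}^k(\mathbb{I})_\lambda=E(\lambda)$ is finite dimensional (it is isomorphic to the regular representation of $W$, see \cite{St}), the space $F_{ir}^k(\mathbb{I})_\lambda\otimes_{\mathbb{C}}M$ is naturally the space of all functions $W^a\to E(\lambda)$; for a basis $\{v_i\}$ of $E(\lambda)$ with dual basis $\{v_i^*\}$ one finds $\Psi_\lambda(g)=\sum_i v_i\otimes v_i^*\bigl(Q_{k,\mathbb{I}}(\cdot)g\bigr)$, which under the above identification is simply the function $w\mapsto Q_{k,\mathbb{I}}(w)g$. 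Because $S(V_{\mathbb{C}})^W$ is central in $A(k)$, each operator $Q_{k,\mathbb{I}}(w)$ preserves the central-character-$\lambda$ subspace $E(\lambda)$, so $\Psi_\lambda(g)$ indeed lands in $F_{ir}^k(M)_\lambda$, and $\Psi_\lambda$ is injective by evaluation at $w=1$. The asserted equivariance is now immediate: for $\sigma\in W^a$,
\[\bigl((\textup{Id}\otimes R(\sigma))\Psi_\lambda(g)\bigr)(w)=\Psi_\lambda(g)(w\sigma)=Q_{k,\mathbb{I}}(w\sigma)g=Q_{k,\mathbb{I}}(w)Q_{k,\mathbb{I}}(\sigma)g=\bigl(\Psi_\lambda(Q_{k,\mathbb{I}}(\sigma)g)\bigr)(w),\]
and $\textup{Id}\otimes R(W^a)$ preserves $F_{ir}^k(M)_\lambda^{W^a}$ because the right regular action commutes with the left regular action on $M$ and with the operators $I(a)$ (which act on the other tensor leg), so $\textup{Id}\otimes R(w)$ commutes with $Q_{k,M}(\sigma)$ for all $\sigma\in W^a$.

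It remains to prove surjectivity onto $F_{ir}^k(M)_\lambda^{W^a}$. Both the action $Q_{k,M}$ on $F_{ir}^k(M)$ and the diagonal action of $Q_{k,\mathbb{I}}$ and the left regular action on $F_{ir}^k(\mathbb{I})\otimes M$ preserve the common subspace $E(\lambda)\otimes_{\mathbb{C}}M$ of central character $\lambda$ inside $C^\omega(V)\otimes_{\mathbb{C}}M$, so intersecting the identity of Proposition \ref{invariantsequal} with this subspace yields
\[F_{ir}^k(M)_\lambda^{W^a}=\bigl(F_{ir}^k(\mathbb{I})_\lambda\otimes_{\mathbb{C}}M\bigr)^{W^a},\]
the invariants on the right being taken with respect to the diagonal action. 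A direct computation of this diagonal action on a function $\Phi\colon W^a\to E(\lambda)$ shows it is invariant if and only if $\Phi(\sigma w)=Q_{k,\mathbb{I}}(\sigma)\Phi(w)$ for all $\sigma,w\in W^a$, equivalently $\Phi(w)=Q_{k,\mathbb{I}}(w)\Phi(1)$ for all $w$; this is precisely the image of $\Psi_\lambda$. Combined with the injectivity and equivariance above, this exhibits $\Psi_\lambda$ as the desired isomorphism of $W^a$-modules. I expect the only delicate point to be the bookkeeping ensuring that the two $W^a$-actions are compatible with the central-character grading, so that Proposition \ref{invariantsequal} may be applied fiberwise over $\lambda$; alternatively one can bypass Proposition \ref{invariantsequal} and verify directly both the $Q_{k,M}$-invariance of each $\Psi_\lambda(g)$ and the surjectivity, using that $I(a)$ maps $C^\omega(V)$ into the subspace of $s_a$-antiinvariants (recorded in the proof of Proposition \ref{invariantsequal}).
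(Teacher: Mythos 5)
Your proposal is correct and follows essentially the same route as the paper: identify $\Psi_\lambda(g)$ with the function $w\mapsto Q_{k,\mathbb{I}}(w)g$, invoke Proposition \ref{invariantsequal} (restricted to central character $\lambda$) to identify the target with the diagonal invariants of $F_{ir}^k(\mathbb{I})_\lambda\otimes\textup{Fun}_{\mathbb{C}}(W^a)$, and observe that these invariants are exactly the functions determined by their value at the identity via $\Phi(w)=Q_{k,\mathbb{I}}(w)\Phi(1)$ — which is precisely the paper's two-sided inverse $\Phi_\lambda(\sum_j g_j\otimes h_j)=\sum_j h_j(1)g_j$.
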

\begin{proof}
By construction $\Psi_\lambda$ defines a complex linear map
\[\Psi_\lambda: F_{ir}^k(\mathbb{I})_\lambda\rightarrow
\bigl(F_{ir}^k(\mathbb{I})_\lambda\otimes 
\textup{Fun}_{\mathbb{C}}(W^a)\bigr)^{W^a}.
\]
By Proposition \ref{invariantsequal} we have
\begin{equation}\label{identification}
\bigl(F_{ir}^k(\mathbb{I})_\lambda\otimes 
\textup{Fun}_{\mathbb{C}}(W^a)\bigr)^{W^a}=
F_{ir}^k(\textup{Fun}_{\mathbb{C}}(W^a))_\lambda^{W^a}.
\end{equation}
The resulting linear map
\[\Psi_\lambda: F_{ir}^k(\mathbb{I})_\lambda\rightarrow 
F_{ir}^k(\textup{Fun}_{\mathbb{C}}(W^a))_\lambda^{W^a}
\]
is easily seen to be a morphism of left $W^a$-modules. 
It remains to show that $\Psi_\lambda$
is an isomorphism. Define the linear mapping $\Phi_\lambda: 
F_{ir}^k(\textup{Fun}_{\mathbb{C}}(W^a))_\lambda^{W^a}
\rightarrow F_{ir}^k(\mathbb{I})_\lambda$
by 
\[\Phi_\lambda(\sum_jg_j\otimes h_j)=\sum_jh_j(1)g_j,
\]
where $1$ is the unit element of $W^a$.
Clearly $\Phi_\lambda$ is a left
inverse of $\Psi_\lambda$. By the alternative
characterization \eqref{identification} we have for
$\sum_jg_j\otimes h_j\in F_{ir}^k(\textup{Fun}_{\mathbb{C}}(W^a))_\lambda^{W^a}$,
\[
\sum_jh_j(w)g_j=\sum_jh_j(1)Q_{k,\mathbb{I}}(w)g_j,\qquad \forall\, w\in W^a,
\]
which implies that $\Phi_\lambda$ is also the right inverse of
$\Psi_\lambda$.
\end{proof}
Let $\lambda\in C_k^{reg}$.
Theorem \ref{BAEW} gives rise to the following explicit description of the
solutions of the Bethe ansatz equations \eqref{BAE}
for $M=\textup{Fun}_{\mathbb{C}}(W^a)$.

For a common eigenfunction of the scalar
Dunkl type operators, $f=T^{k,\mathbb{I}}(g)\in 
F_{dr}^{\omega,k}(\mathbb{I})_\lambda
=T^{k,\mathbb{I}}(F_{ir}^k(\mathbb{I})_\lambda)$, we have
\[f(u^{-1}\,\cdot)|_{C_+}=\sum_{w\in W}c_{g,w}(u)e^{w\lambda}|_{C_+},
\qquad u\in W^a,
\]
with $c_{g,w}\in\textup{Fun}_{\mathbb{C}}(W^a)$ determined by
\[Q_{k,\mathbb{I}}(u)g=\sum_{w\in W}c_{g,w}(u)e^{w\lambda},\qquad
\forall\, u\in W^a.
\]
\begin{cor}\label{BAEWcor}
For $\lambda\in C_k^{reg}$ we have
\[\Psi_\lambda(g)=\sum_{w\in W}e^{w\lambda}\otimes c_{g,w},\qquad
\forall\, g\in F_{ir}^k(\mathbb{I})_\lambda.
\]
In particular, $c_{g,w}=L(J_w^k(\lambda))c_{g,1}$ for $w\in W$
and $g\in F_{ir}^k(\mathbb{I})_\lambda$, and 
the functions $c_{g,1}\in\textup{Fun}_{\mathbb{C}}(W^a)$
($g\in F_{ir}^k(\mathbb{I})_\lambda$) give all the solutions of the
Bethe ansatz equations \eqref{BAE} for the $W^a$-module 
$M=\textup{Fun}_{\mathbb{C}}(W^a)$.
\end{cor}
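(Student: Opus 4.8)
The plan is to compute $\Psi_\lambda(g)$ directly from the definitions and then feed the result into Theorem \ref{BAEW}, Proposition \ref{coeffW} and Theorem \ref{BAEthm}. Since $\lambda\in C_k^{reg}$ we have $\lambda(\alpha^\vee)\not=0$ for all $\alpha\in R$, so $F_{ir}^k(\mathbb{I})_\lambda=E(\lambda)=\bigoplus_{w\in W}\mathbb{C}e^{w\lambda}$; I would take this plane wave basis $\{e^{w\lambda}\}_{w\in W}$ as the basis $\{v_i\}_i$ in the definition of the co-evaluation map $\epsilon_\lambda$, with $\{(e^{w\lambda})^*\}_{w\in W}$ the corresponding dual basis. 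Unwinding the composition defining $\Psi_\lambda$ then gives $\Psi_\lambda(g)=(\textup{Id}\otimes\Xi_\lambda)\bigl(\sum_{w\in W}e^{w\lambda}\otimes(e^{w\lambda})^*\otimes g\bigr)=\sum_{w\in W}e^{w\lambda}\otimes (e^{w\lambda})^*\bigl(Q_{k,\mathbb{I}}(\cdot)g\bigr)$, and by the definition of $c_{g,w}$ recorded just above the corollary, $(e^{w\lambda})^*\bigl(Q_{k,\mathbb{I}}(u)g\bigr)=c_{g,w}(u)$ for $u\in W^a$. This is the first displayed identity.

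For the second identity, recall from Theorem \ref{BAEW} that $\Psi_\lambda(g)$ lies in $F_{ir}^k(\textup{Fun}_{\mathbb{C}}(W^a))_\lambda^{W^a}$, hence in particular it is invariant under $Q_{k,\textup{Fun}_{\mathbb{C}}(W^a)}(W)$. Applying Proposition \ref{coeffW} to the $W^a$-module $M=\textup{Fun}_{\mathbb{C}}(W^a)$ (with its left regular action $L$) and to the plane wave expansion $\Psi_\lambda(g)=\sum_{w\in W}e^{w\lambda}\otimes c_{g,w}$ just obtained, we get $c_{g,w}=J_w^k(\lambda)\cdot c_{g,1}=L(J_w^k(\lambda))c_{g,1}$ for all $w\in W$. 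In particular $\Psi_\lambda(g)=\psi_\lambda^{c_{g,1}}$ in the notation of Corollary \ref{coeffW2} applied to $M=\textup{Fun}_{\mathbb{C}}(W^a)$ and $m=c_{g,1}$.

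It remains to identify the range of $g\mapsto c_{g,1}$. By Theorem \ref{BAEW} the map $g\mapsto\Psi_\lambda(g)=\psi_\lambda^{c_{g,1}}$ is a bijection from $F_{ir}^k(\mathbb{I})_\lambda$ onto $F_{ir}^k(\textup{Fun}_{\mathbb{C}}(W^a))_\lambda^{W^a}$; in particular $g\mapsto c_{g,1}$ is injective, since $c_{g,1}=0$ forces $c_{g,w}=L(J_w^k(\lambda))c_{g,1}=0$ for all $w$ and hence $\Psi_\lambda(g)=0$, whence $g=0$. On the other hand Corollary \ref{coeffW2} shows that $m\mapsto\psi_\lambda^m$ is injective on $M=\textup{Fun}_{\mathbb{C}}(W^a)$, and Theorem \ref{BAEthm} shows that $\psi_\lambda^m\in F_{ir}^k(\textup{Fun}_{\mathbb{C}}(W^a))_\lambda^{W^a}$ precisely when $m$ satisfies the Bethe ansatz equations \eqref{BAE}. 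Combining these facts: every $c_{g,1}$ is a solution of \eqref{BAE} because $\psi_\lambda^{c_{g,1}}=\Psi_\lambda(g)$ is $W^a$-invariant, and conversely any solution $m$ of \eqref{BAE} gives a $W^a$-invariant $\psi_\lambda^m$ which, by surjectivity of $\Psi_\lambda$, equals $\Psi_\lambda(g)=\psi_\lambda^{c_{g,1}}$ for some $g$, so $m=c_{g,1}$ by injectivity of $m\mapsto\psi_\lambda^m$. Hence as $g$ ranges over $F_{ir}^k(\mathbb{I})_\lambda$ the function $c_{g,1}$ ranges over exactly the set of solutions in $\textup{Fun}_{\mathbb{C}}(W^a)$ of \eqref{BAE}.

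The only real work here is the careful unwinding of the definitions and keeping the three $W^a$-actions on $\textup{Fun}_{\mathbb{C}}(W^a)$ apart: the left regular action $L$ built into the $A(k)$-module $F_{ir}^k(\textup{Fun}_{\mathbb{C}}(W^a))$, the right regular action $R$ used in the formulation of Theorem \ref{BAEW}, and the conjugation action through which $\Xi_\lambda$ is equivariant. Once this bookkeeping is in place the corollary is a formal consequence of Theorems \ref{BAEW} and \ref{BAEthm} together with Proposition \ref{coeffW} and Corollary \ref{coeffW2}; no estimate or further construction is required.
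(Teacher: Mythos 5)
Your proof is correct and is exactly the intended derivation: the paper states Corollary \ref{BAEWcor} without proof, and your unwinding of $\epsilon_\lambda$ in the plane-wave basis, followed by Proposition \ref{coeffW}, Corollary \ref{coeffW2}, Theorem \ref{BAEthm} and the bijectivity of $\Psi_\lambda$ from Theorem \ref{BAEW}, supplies precisely the omitted details. The bookkeeping of the three $W^a$-actions on $\textup{Fun}_{\mathbb{C}}(W^a)$ is handled correctly.
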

\begin{rema}
Corollary \ref{BAEWcor} should be compared to 
the observation of Yang \cite{Y} (see also \cite{S,LLP,LLT})
that an arbitrary eigenfunction of the quantum Hamiltonian 
of the one dimensional quantum particle system with 
delta function potentials gives rise to 
a symmetric eigenfunction of the quantum Hamiltonian
of an associated quantum spin-particle model.
\end{rema}

The principal series modules of $W^a$ are defined as follows.
For $t\in V_{\mathbb{C}}^*$ the principal series module of $W^a$
with central character $t$ is the vector space
$M(t)=\bigoplus_{w\in W}\mathbb{C}v_w(t)$ with action given by
\begin{equation*}
\begin{split}
uv_w(t)&=v_{uw}(t),\,\,\,\,\qquad\qquad u\in W,\\
yv_w(t)&=e^{t(w^{-1}y)}v_w(t),\qquad y\in Y
\end{split}
\end{equation*}
for $w\in W$. Observe that $M(t)$ is unitarizable iff 
$t\in\sqrt{-1}V^*$, in which case the corresponding scalar product
is given by $\bigl(v_u(t), v_w(t)\bigr):=\delta_{u,w}$ for $u,w\in W$
(conjugate linear in the second component).
  
Yang \cite{Y} derived and studied the Bethe ansatz equations
\eqref{BAE} for root system $R$ of type $A$ and for $M$ the
principal series module $M(0)$. More generally, a 
natural problem is to describe
the solutions of the Bethe ansatz equations \eqref{BAE} for the 
principal series module $M(t)$. 
We make a modest start here with the following observation.

\begin{prop}
Fix $\lambda\in C_k^{reg}$ and let $M$ be a unitary $W^a$-module.
Assume that $k_a\in\mathbb{R}_{<0}$ for all $a\in R^a$
(which corresponds to repulsive delta function interactions in the physical
interpretation).
If there exists an $0\not=m\in M$ satisfying the Bethe ansatz
equations \eqref{BAE}, then $\lambda\in\sqrt{-1}V^*$.
\end{prop}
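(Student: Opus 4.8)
The plan is to use the unitary structure of $M$ to turn the cocycle element $J_y^k(\lambda)=J_{t_y}^k(\lambda)$ into a contraction of $M$ for a suitable $y\in Y$; combined with the eigenvalue relation $J_y^k(\lambda)m=e^{\lambda(y)}m$ furnished by \eqref{BAE} (with $m\neq 0$), this forces $|e^{\lambda(y)}|\le 1$, i.e. $\mathrm{Re}(\lambda(y))\le 0$. The basic input is a one–variable computation. For $a\in F^a$ the reflection $s_a$ acts on the unitary module $M$ as a self-adjoint unitary involution (since $s_a^2=1$ and $M$ is unitary), so $M=M_+\oplus M_-$ orthogonally into its $(\pm1)$-eigenspaces, and for $\nu\in V_{\mathbb{C}}^*$ with $\nu(Da^\vee)\neq k_a$ the element $J_{s_a}^k(\nu)=\frac{\nu(Da^\vee)s_a+k_a}{\nu(Da^\vee)-k_a}$ acts by the scalar $\frac{\pm\nu(Da^\vee)+k_a}{\nu(Da^\vee)-k_a}$ on $M_\pm$; in particular it acts as $-1$ on $M_-$. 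Hence $J_{s_a}^k(\nu)$ is a contraction of $M$ whenever $k_a<0$ and $\mathrm{Re}(\nu(Da^\vee))\ge 0$, because then $|\nu(Da^\vee)+k_a|^2-|\nu(Da^\vee)-k_a|^2=4k_a\,\mathrm{Re}(\nu(Da^\vee))\le 0$.

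Next I would factorize $J_y^k(\lambda)$. Write $t_y=\omega\,s_{b_1}\cdots s_{b_l}$ with $\omega\in\Omega$, $b_i\in F^a$ and $l=l(t_y)$, and iterate the cocycle relation from the definition of $J_w^k(\lambda)$ to obtain $J_y^k(\lambda)=\omega\prod_{i=1}^{l}J_{s_{b_i}}^k(\lambda^{(i)})$ with $\lambda^{(i)}=D(s_{b_{i+1}}\cdots s_{b_l})\lambda$. A direct computation identifies $\lambda^{(i)}(Db_i^\vee)$ with $\lambda\bigl((D\beta_i)^\vee\bigr)$, where $\beta_i:=s_{b_l}\cdots s_{b_{i+1}}(b_i)$ runs over $R^{a,+}\cap t_y^{-1}R^{a,-}$; in particular each gradient $\gamma_i:=D\beta_i$ is a root with $\gamma_i(y)>0$, and $k_{\beta_i}=k_{b_i}<0$ by $W^a$-invariance of $k$. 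Since $\omega$ acts unitarily on $M$, the one–variable estimate gives: if $\mathrm{Re}(\lambda(\gamma^\vee))\ge 0$ for every root $\gamma$ with $\gamma(y)>0$, then every factor $J_{s_{b_i}}^k(\lambda^{(i)})$ is a contraction of $M$, so $\|J_y^k(\lambda)\|_{\mathrm{op}}\le 1$, and therefore $\mathrm{Re}(\lambda(y))\le 0$ by \eqref{BAE}.

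Finally I would choose $y$ to conclude. Put $\mu=\mathrm{Re}(\lambda)\in V^*$ and let $\mu^\sharp\in V$ be its metric dual; for any root $\gamma$, $\mathrm{Re}(\lambda(\gamma^\vee))=\mu(\gamma^\vee)$ has the same sign as $\gamma(\mu^\sharp)$. Thus, by the previous step, every $y\in Y$ with $\gamma(y)\le 0$ for all roots $\gamma$ satisfying $\gamma(\mu^\sharp)<0$ obeys $\mu(y)\le 0$. Suppose $\mu\neq 0$; then $\mu^\sharp\neq 0$, and the open cone $U=\{v\in V:\langle\mu^\sharp,v\rangle>0\text{ and }\gamma(v)<0\text{ for every root }\gamma\text{ with }\gamma(\mu^\sharp)<0\}$ contains $\mu^\sharp$, hence is nonempty. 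Since $Y$ is a full lattice it meets $U$, and any $y\in Y\cap U$ then satisfies simultaneously $\mu(y)=\langle\mu^\sharp,y\rangle>0$ and (by construction of $U$) $\mu(y)\le 0$, which is absurd. Hence $\mu=0$, that is $\lambda\in\sqrt{-1}V^*$.

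I expect the delicate point to be the cocycle factorization in the second step: correctly computing the evaluation points $\lambda^{(i)}(Db_i^\vee)=\lambda\bigl((D\beta_i)^\vee\bigr)$, using the precise description of $R^{a,+}\cap t_y^{-1}R^{a,-}$ (crucially, that it only contributes gradients $\gamma$ with $\gamma(y)>0$), and keeping the argument uniform over the possibly several $W^a$-orbits of the multiplicity function. The one–variable norm estimate and the final convex-geometry argument should be routine.
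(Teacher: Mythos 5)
Your proof is correct. It rests on the same two ingredients as the paper's argument --- the rank-one estimate that $\frac{us_a+k_a}{u-k_a}$ is a contraction of a unitary module when $k_a<0$ and $\mathrm{Re}(u)\ge 0$ (your version, via the orthogonal decomposition $M=M_+\oplus M_-$ on which the operator acts by $\frac{u+k_a}{u-k_a}$ and $-1$, is actually stated more carefully than the paper's, which asserts an equality of norms that only holds on $M_+$), and the cocycle factorization of $J_w^k(\lambda)$ along a reduced expression, with evaluation points running over $R^{a,+}\cap w^{-1}R^{a,-}$. The global organization, however, is genuinely different. The paper first normalizes $\mathrm{Re}(\lambda)=\mu$ to be dominant by replacing $m$ with $J_w^k(\lambda)m$, then needs only the single contraction estimate for $J_{\theta^\vee}^k(\lambda)$ (via $t_{\theta^\vee}=s_0s_\theta$) to force $\mu(\theta^\vee)=0$ and hence $\mu(\alpha^\vee)=0$ for all simple $\alpha$; the component of $\mu$ orthogonal to the root span (present since $R$ need not span $V^*$) is then killed by a separate computation with the elements $\omega=\sigma t_y\in\Omega$. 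You skip the normalization, prove the contraction estimate for $J_y^k(\lambda)$ uniformly for every $y\in Y$ lying in the cone determined by $\mu$ (using that all gradients of $R^{a,+}\cap t_y^{-1}R^{a,-}$ are roots $\gamma$ with $\gamma(y)>0$, which is exactly the description used in the proof of Proposition \ref{explicitcocycle}), and close with the observation that a full lattice meets every nonempty open cone. This buys a single uniform argument that treats $Q^\vee$, the $\Omega$-part of $Y$, and the orthogonal complement of the root span all at once, at the price of the small convex-geometry step at the end; the paper's route avoids that step but needs the dominance reduction, the identity $\theta^\vee=\sum_\alpha n_\alpha\alpha^\vee$ with $n_\alpha\ge 1$, and the separate $\Omega$ bookkeeping.
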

\begin{proof}
We assume that 
$\lambda=\mu+\sqrt{-1}\nu\in C_k^{reg}$ with $\mu,\nu\in V^*$ and 
$\mu(\alpha^\vee)\geq 0$ for all $\alpha\in R^+$.
It suffices to prove the theorem under these additional 
assumptions; indeed, if $0\not=m\in M$ satisfies \eqref{BAE} 
then $0\not=n:=J_w^k(\lambda)m\in M$ ($w\in W$) satisfies 
\[
J_y^k(w\lambda)n=e^{(w\lambda)(y)}n,\qquad \forall\, y\in Y.
\]

Fix $k<0$ and $u\in\mathbb{C}$ with $\textup{Re}(u)\geq 0$.
Denote  $\bigl(\cdot,\cdot\bigr)$ for the scalar product on $M$, and
$\|\cdot\|$ for the corresponding norm.
For a simple affine root $a\in F^a$ and $0\not=n\in M$ we then have
\[
\left\|\left(\frac{us_a+k}{u-k}\right)n\right\|=\left|\frac{u+k}{u-k}\right|
\|n\|,
\]
which is $\leq \|n\|$ and $=\|n\|$ iff $\textup{Re}(u)=0$.

We are first going to apply this estimate to
show that $J_{\theta^\vee}^k(\lambda)$
has operator norm $\leq 1$.
Write $s_\theta=s_{\alpha_1}s_{\alpha_2}\cdots s_{\alpha_r}$ 
($\alpha_j\in F$) for a reduced expression of $s_\theta\in W$. Then
\[
R^+\cap s_\theta R^-=\{\beta_1,\ldots,\beta_r\},
\qquad \beta_j=s_{\alpha_r}s_{\alpha_{r-1}}\cdots s_{\alpha_{j+1}}(\alpha_j).
\]
Since $t_{\theta^\vee}=s_0s_\theta$ the cocycle condition gives
\[J_{\theta^\vee}^k(\lambda)=\left(\frac{\lambda(\theta^\vee)s_0+k_0}
{\lambda(\theta^\vee)-k_0}\right)\left(\frac{\lambda(\beta_1)s_{\alpha_1}+
k_{\alpha_1}}{\lambda(\beta_1)-k_{\alpha_1}}\right)
\cdots
\left(\frac{\lambda(\beta_r)s_{\alpha_r}+k_{\alpha_r}}{\lambda(\beta_r)-
k_{\alpha_r}}\right).
\]
The previous paragraph thus implies that 
$\|J_{\theta^\vee}^k(\lambda)\|\leq 1$.

Let $0\not=m\in M$ be a solution
of the Bethe ansatz equations \eqref{BAE}.
Then
\[|e^{\lambda(\theta^\vee)}|=\frac{\|J_{\theta^\vee}^k(\lambda)m\|}{\|m\|}
\leq 1.
\]
Since $\lambda=\mu+\sqrt{-1}\nu$ with $\mu(\alpha^\vee)\geq 0$ for all
$\alpha\in R^+$ we obtain $\mu(\theta^\vee)=0$. We have
$\theta^\vee=\sum_{\alpha\in F}n_\alpha\alpha^\vee$ with $n_\alpha\geq 1$,
hence $\mu(\alpha^\vee)=0$ for all $\alpha\in F$.
In particular, $\mu\in V^*$ is $W$-invariant and
\[
\|J_w(\lambda)n\|=\|J_w(\sqrt{-1}\nu)n\|=\|n\|,\qquad \forall\, w\in W, 
\forall\, n\in M.
\]
Let now $\omega\in\Omega$ and write $\omega=\sigma t_y$ with 
$\sigma\in W$ and $y\in Y$. Then 
\begin{equation*}
\begin{split}
\|m\|=\|\omega m\|&=\|J_\omega^k(\lambda)m\|\\
&=\|J_\sigma^k(\lambda)J_y^k(\lambda)m\|\\
&=|e^{\lambda(y)}|\|m\|,
\end{split}
\end{equation*}
hence $\mu(y)=0$. As in the proof of Theorem 
\ref{BAEthm} we conclude that $\mu(Y)=0$, hence $\mu=0$.
\end{proof}

\noindent
{\bf Acknowledgments:} Stokman is supported by the Netherlands Organization for Scientific Research (NWO) in the VIDI-project "Symmetry and modularity in exactly solvable models". He thanks Pavel Etingof for useful discussions.


\begin{thebibliography}{XXX}
\bibitem[1]{AFK} S. Albeverio, S.-M. Fei, P. Kurasov,
{\it Many body problems with ``spin''-related contact interactions},
Rep. Math. Phys. {\bf 47} (2001), no. 2, 157--166.
\bibitem[2]{AST} T. Arakawa, T. Suzuki, A. Tsuchiya,
{\it Degenerate double affine Hecke algebra and conformal field theory}.
Topological field theory, primitive forms and related topics
(Kyoto, 1996), 1--34, Progr. Math., {\bf 160}, Birkh{\"a}user
Boston, Boston, MA, 1998.
\bibitem[3]{Chercoc} I. Cherednik, {\it Quantum Knizhnik-Zamolodchikov equations
and affine root systems}, Comm. Math. Phys. {\bf 150} (1992), 109--136.
\bibitem[4]{C0} I. Cherednik, {\it Elliptic quantum many-body problem and
double affine Knizhnik-Zamolodchikov equation}, Comm. Math. Phys. {\bf 169}, 
no. 2 (1995), 441--461.
\bibitem[5]{C} I. Cherednik, {\it Inverse Harish-Chandra
transform and difference operators}, Internat. Math. Res.
Not. {\bf 1997}, no. 15, 733--750.
\bibitem[6]{Cbook} I. Cherednik, {\it Double affine Hecke algebras},
London Mathematical Society Lecture Note Series, {\bf 319}. Cambridge 
University Press, Cambridge, 2005. 
\bibitem[7]{CY} N. Cramp{\'e}, C.A.S. Young, {\it Bethe equations for a
$\mathfrak{g}_2$ model}, J. Phys. A: Math. Gen. {\bf 39} (2006), L315--L143.
\bibitem[8]{Dr} V.G. Drinfeld, {\it Degenerate affine Hecke algebras and 
Yangians}, Funktional Anal. Appl. {\bf 20} (1986), no. 1, 62--64.
\bibitem[9]{EOS} E. Emsiz, E.M. Opdam, J.V. Stokman,
{\it Periodic integrable systems with delta-potentials},
Comm. Math. Phys. {\bf 264} (2006), 191--225.
\bibitem[10]{FL} M. Flicker, E.H. Lieb, {\it Delta-function Fermi
gas with two-spin deviates}, Phys. Rev. {\bf 161}, no. 1 (1967), 
179--188. 
\bibitem[11]{Ga0} M. Gaudin, {\it Un systeme a une dimension de fermions
en interaction}, Phys. Lett. {\bf 24A} (1967), 55--56.
\bibitem[12]{Ga} M. Gaudin, {\it La fonction d'onde de Bethe}, Collection
 du Commissariat
{\`a} l'{\'E}nergie Atomique: S{\'e}rie Scientifique. Masson, Paris, 1983.
\bibitem[13]{G} E. Gutkin, {\it Integrable systems with delta-potential},
Duke Math. J. {\bf 49}, no. 1 (1982), 1--21.
\bibitem[14]{G2} E. Gutkin, {\it Operator calculi associated with reflection
groups}, Duke Math. J. {\bf 55}, no. 1 (1987), 1--18.
\bibitem[15]{GY} C.H. Gu, C.N. Yang, {\it A one-dimensional $N$ fermion
problem with factorized $S$-matrix}, Comm. Math. Phys. {\bf 122} (1989),
105--116.
\bibitem[16]{GS} E. Gutkin, B. Sutherland, {\it Completely integrable
systems and groups generated by reflections}, Proc. Natl. Acad. Sci.
USA {\bf 76}, no. 12 (1979), 6057--6059.
\bibitem[17]{HO} G.J. Heckman, E.M. Opdam, {\it Yang's system of
particles and Hecke algebras}, Ann. of Math. (2) {\bf 145} (1997),
139--173.
\bibitem[18]{HS} G.J. Heckman, H. Schlichtkrull, {\it Harmonic
Analysis and Special Functions on Symmetric Spaces}, Perspectives
in Mathematics, {\bf 16}. Academic Press, Inc., San Diego, CA,
1994.
\bibitem[19]{H} K. Hikami, {\it Notes on the $\delta$-function interacting
gas. Intertwining operator in the degenerate affine Hecke algebra},
J. Phys. A: Math. Gen. {\bf 31} (1998), L85--L91.
\bibitem[20]{HK} K. Hikami, Y. Komori, {\it Nonlinear Schr{\"o}dinger
model with boundary, integrability and scattering matrix based on
the degenerate affine Hecke algebra}, Int. J. Mod. Phys. A {\bf 12},
no. 3 (1997), 5397--5410.
\bibitem[21]{LLT} A. Lascoux, B. Leclerc, J.-Y. Thibon,
{\it Flag varieties and the Yang-Baxter equation}, Lett. Math.
Phys. {\bf 40} (1997), no. 1, 75--90.
\bibitem[22]{LLP} B.Z. Li, S.Q. Lu, F.C. Pu, {\it Completeness
of Bethe type eigenfunctions for the 1D $N$-body system with
$\delta$-function interactions}, Phys. Lett. {\bf 110A} (1985),
no. 2, 65--67.
\bibitem[23]{LL} E.H. Lieb, W. Liniger, {\it Exact analysis of an
interacting Bose gas. I. The general solution and the ground state},
Phys. Rev. (2), {\bf 130} (1963), 1605--1616. 
\bibitem[24]{L} G. Lusztig, {\it Affine Hecke algebras and their
graded version}, J. Amer. Math. Soc. {\bf 2} (1989), no. 3, 599--635.
\bibitem[25]{McG0} J.B. McGuire, {\it Study of exactly soluble one-dimensional
$N$-body problems}, J. Math. Phys. {\bf 5} (1964), 622--626.
\bibitem[26]{McG} J.B. McGuire, {\it Interacting fermions in one dimension.
I. Repulsive potential}, J. Math. Phys. {\bf 6} (1965), 432--439.
\bibitem[27]{MW} S. Murakami, M. Wadati, {\it Connection between Yangian
symmetry and the quantum inverse scattering method}, J. Phys. A:
Math. Gen. {\bf 29} (1996), 7903--7915.
\bibitem[28]{Ob} A. Oblomkov, {\it Double affine Hecke algebras
and Calogero-Moser spaces}, Represent. Theory {\bf 8} (2004),
243--266.
\bibitem[29]{OComp} E.M. Opdam, {\it Root systems and hypergeometric functions
IV}, Compositio Math. {\bf 67}, no. 2 (1988), 191--206. 
\bibitem[30]{O} E.M. Opdam, {\it Harmonic analysis for
certain representations of graded Hecke algebras},
Acta Math. {\bf 175} (1995), 75--121.
\bibitem[31]{OL} E.M. Opdam, {\it Lectures on Dunkl operators for
real and complex reflection groups}, MSJ Memoirs {\bf 8}, Math. Soc. Japan
(2000).
\bibitem[32]{P} A.P. Polychronakos, {\it Exchange operator formalism
for integrable systems of particles}, Phys. Rev. Lett. {\bf 69}
(1992), 703--705.
\bibitem[33]{RV} N. Reshetikhin, A. Varchenko, {\it Quasiclassical
asymptotics of solutions to the KZ equations} in: Geometry, topology and 
physics, 293--322, Conf. Proc. Lecture Notes Geom. Topology, IV,
Int. Press, Cambridge, MA, 1995.
\bibitem[34]{St} R. Steinberg, {\it Differential equations invariant
under finite reflection groups}, Trans. Amer. Math. Soc. {\bf 112}
(1964), 392--400.
\bibitem[35]{S} B. Sutherland, {\it Further results for the many-body
problem in one dimension}, Phys. Rev. Lett. {\bf 20} (1968), 98--100.  
\bibitem[36]{Y} C.N. Yang, {\it Some exact results for the many-body
problem in one dimension with repulsive delta-function interaction},
Phys. Rev. Lett. {\bf 19} (1967), 1312--1315.
\bibitem[37]{YY} C.N. Yang, C.P. Yang, {\it Thermodynamics of a one-dimensional
system of bosons with repulsive delta-function interaction},
J. Math. Phys. {\bf 10} (1969), 1115--1122.
\end{thebibliography}
\end{document}